\documentclass[11pt]{article}
\usepackage[margin=1in]{geometry}

\usepackage{mathtools}
\usepackage{empheq}
\usepackage{amsfonts}
\usepackage{amsgen,amsmath,amstext,amsbsy,amsopn,amssymb,subfigure,amsthm,stmaryrd}
\usepackage{bm}
\usepackage{comment}
\usepackage[dvips]{graphicx}
\usepackage{enumitem}
\usepackage{natbib}
\usepackage{booktabs}
\usepackage{blkarray}
\usepackage{algorithm}
\usepackage{algpseudocode}
\usepackage{url}
\usepackage{longtable}
\usepackage{mathtools}
\usepackage{multirow}
\usepackage{algorithm} 
\usepackage{algpseudocode}

\usepackage[usenames]{color}

\usepackage{mathtools}
\usepackage{empheq}
\usepackage{amsfonts}
\usepackage{amsgen,amsmath,amstext,amsbsy,amsopn,amssymb,subfigure,amsthm,stmaryrd}
\usepackage[dvips]{graphicx}
\usepackage{enumitem}
\usepackage{natbib}
\usepackage{booktabs}
\usepackage{blkarray}
\usepackage{algorithm}
\usepackage{algpseudocode}
\usepackage{url}
\usepackage{longtable}
\usepackage{mathtools}
\usepackage{multirow}

\usepackage[usenames]{color}

\allowdisplaybreaks

\newtheorem{thm}{Theorem}[section]

\newtheorem{lemma}{Lemma}[section]
\newtheorem{Proposition}{Proposition}[section]
\newtheorem{Definition}{Definition}[section]
\newtheorem{rmk}{Remark}[section]
\newtheorem{cor}{Corollary}[section]

\allowdisplaybreaks

\newcommand{\be}{\begin{equation}}
\newcommand{\ee}{\end{equation}}
\newcommand{\bea}{\begin{eqnarray}}
\newcommand{\eea}{\end{eqnarray}}
\newcommand{\beas}{\begin{eqnarray*}}
\newcommand{\eeas}{\end{eqnarray*}}

\newcommand{\bbR}{\mathbb{R}}

\newcommand{\Var}{{\rm Var}}

\newcommand{\F}{{\mathrm{F}}}

\newcommand{\rank}{{\rm rank}}

\newcommand{\diag}{{\rm diag}}

\newcommand{\SVD}{{\rm SVD}}

\newcommand{\argmin}{\mathop{\rm arg\min}}
\newcommand{\argmax}{\mathop{\rm arg\max}}
\newcommand{\RR}{\mathbb{R}}

\newcommand{\bbP}{\mathbb{P}}
\newcommand{\bbE}{\mathbb{E}}
\newcommand{\bp}{\boldsymbol{p}}
\newcommand{\br}{\boldsymbol{r}}

\newcommand{\bfc}[1]{\boldsymbol{\mathcal #1}} 

\newenvironment{varalgorithm}[1]
{\algorithm}
{\endalgorithm}

\setdescription{font = \normalfont}

\makeatletter
\newcommand*{\rom}[1]{\expandafter\@slowromancap\romannumeral #1@}
\makeatother

\title{Optimal High-order Tensor SVD via Tensor-Train Orthogonal Iteration}

\author{Yuchen Zhou, ~ Anru R. Zhang, ~ Lili Zheng, ~ and ~ Yazhen Wang}

\date{(\today)}

\begin{document}
	\maketitle

\begin{abstract}
	This paper studies a general framework for high-order tensor SVD. We propose a new computationally efficient algorithm, tensor-train orthogonal iteration (TTOI), that aims to estimate the low tensor-train rank structure from the noisy high-order tensor observation. The proposed TTOI consists of initialization via TT-SVD \citep{oseledets2011tensor} and new iterative backward/forward updates. We develop the general upper bound on estimation error for TTOI with the support of several new representation lemmas on tensor matricizations. By developing a matching information-theoretic lower bound, we also prove that TTOI achieves the minimax optimality under the spiked tensor model. The merits of the proposed TTOI are illustrated through applications to estimation and dimension reduction of high-order Markov processes, numerical studies, and a real data example on New York City taxi travel records. The software of the proposed algorithm is available online (link: \url{https://github.com/Lili-Zheng-stat/TTOI}).
\end{abstract}

\let\thefootnote\relax\footnotetext{Address for Correspondence: Anru R. Zhang, Departments of Biostatistics \& Bioinformatics, Computer Science, Mathematics, and Statistical Science, Duke University, Durham, NC 27710, USA (email: \texttt{anru.zhang@duke.edu}).
	
	The research of Yuchen Zhou and Anru R. Zhang were supported in part by NSF under Grants CAREER-1944904, DMS-1811868, and NIH under Grant R01 GM131399; the research of Yazhen Wang was supported in part by NSF under Grants DMS-1707605 and DMS-1913149. This work was done while Yuchen Zhou, Anru R. Zhang, and Lili Zheng were at the University of Wisconsin-Madison.
	
	Yuchen Zhou is with the Department of Statistics and Data Science, The Wharton School, University of Pennsylvania, Philadelphia, PA 19104, USA (email: \texttt{yczhou@wharton.upenn.edu}).
	
	Lili Zheng is with Department of Electrical and Computer Engineering, Rice University, Houston, TX 77005, USA (email: \texttt{lz67@rice.edu}).
	
	Yazhen Wang is with the Department of Statistics, University of Wisconsin-Madison, Madison, WI 53706, USA (email: \texttt{yzwang@stat.wisc.edu}).}

\begin{sloppypar}
\section{Introduction}\label{sec:intro}
	
	Tensors, or high-order arrays, have attracted increasing attention in modern machine learning, computational mathematics, statistics, and data science. Some specific examples include recommender systems \citep{bi2018multilayer,nasiri2014fuzzy}, neuroimaging analysis \citep{wozniak2007neurocognitive,zhou2013tensor}, latent variable learning \citep{anandkumar2014tensor}, multidimensional convolution \citep{oseledets2009breaking}, signal processing \citep{cichocki2015tensor}, neural network \citep{mondelli2019connection,zhong2017learning}, computational imaging \citep{li2010tensor,zhang2020denoising}, contingency table \citep{bhattacharya2012simplex,dunson2009nonparametric}. In addition to low-order tensors (e.g., tensor with a relatively small value of order number), the high-order tensors also commonly arise in applications in statistics and machine learning. For example, in convolutional neural networks, parameters in fully connected layers can be represented as high-order tensors \citep{calvi2019tucker,novikov2015tensorizing}. In an order-$d$ Markov process, where the future states depend on jointly the current and $(d-1)$ previous states, the transition probabilities form an order-$(d+1)$ tensor. For an order-$d$ Markov decision process, the transition probabilities can be represented by an order-$(2d+1)$ tensor, with additional $d$ directions representing past $d$ actions. High-order tensors are also used to represent the joint probability in Markov random fields \citep{novikov2014putting}.
	
	Compared to the low-order tensors, high-order tensors encompass much more parameters and sophisticated structure, while leading to inhibitive cost in storage, processing, and analysis: an order-$d$ dimension-$p$ tensor contains $p^d$ parameters. To address this issue, some low-dimensional parametrization is usually considered to capture the most informative subspaces in the tensor. 
	In particular, the tensor-train (TT) decomposition \citep{fannes1992finitely,oseledets2009new,oseledets2009recursive,oseledets2011tensor,orus2019tensor} introduced a classic low-dimensional parameterization to model the subspaces and latent cores in high-order tensor structures. TT decomposition has been used in a wide range of applications in physics and quantum computation \citep{bravyi2021classical,fannes1992finitely,orus2019tensor,rakhuba2016calculating,schollwock2011density}, signal processing \citep{cichocki2015tensor}, and supervised learning \citep{stoudenmire2016supervised} among many others. For example, the TT decomposition framework is utilized in quantum information science for modeling complex quantum states and handling the quantum mean value problem \citep{bravyi2021classical,fannes1992finitely,orus2019tensor,rakhuba2016calculating}. The TT-decomposition of a tensor $\bfc{X} \in \bbR^{p_1 \times \cdots \times p_d}$ is defined as below:
	\begin{equation}\label{TT}
	\begin{split}
	\bfc{X}_{i_1, \cdots, i_d} = & G_{1, [i_1, :]}\bfc{G}_{2, [:, i_2, :]}\cdots \bfc{G}_{d-1, [:, i_{d-1}, :]}G_{d, [i_d, :]}^\top\\
	= & \sum_{\alpha_1 = 1}^{r_1}\cdots \sum_{\alpha_{d-1} = 1}^{r_{d-1}}G_{1, [i_1, \alpha_1]}\bfc{G}_{2, [\alpha_1, i_2, \alpha_2]}\cdots \bfc{G}_{d-1, [\alpha_{d-2}, i_{d-1}, \alpha_{d-1}]}G_{d, [i_d, \alpha_{d-1}]}.
	\end{split}
	\end{equation}
	Here, the smallest values of $r_1, \dots, r_{d-1}$ that enable the decomposition \eqref{TT} are called the \textit{TT-rank} of $\bfc{X}$. \cite{oseledets2011tensor} shows that the TT-rank $r_k = \rank([\bfc{X}]_k)$, i.e., the rank of the $k$th sequential unfolding of $\bfc{X}$ (see formal definition of sequential unfolding in Section \ref{subsec:notation}). $G_1 \in \bbR^{p_1 \times r_1}$, $\bfc{G}_k \in \bbR^{r_{k-1} \times p_k \times r_k},  G_d \in \bbR^{p_d \times r_{d-1}}$ are the \textit{TT-cores} that multiply sequentially like a ``train": $\bfc{X}_{i_1, \cdots, i_d}$ equals the product of $i_1$th vector in $G_1$, $i_2$th matrix in $\bfc{G}_{2}$, \dots, $i_{d-1}$th matrix in $\bfc{G}_{d-1}$, and $i_d$th vector in $G_d$.  For convenience of presentation, we simplify \eqref{TT} to
	$$\bfc{X} = \llbracket G_1, \bfc{G}_2, \ldots, \bfc{G}_{d-1}, G_d\rrbracket$$ 
	and denote $r_0 = r_d = 1$ throughout the paper. In particular, the TT rank and TT decomposition reduce to the regular matrix rank and decomposition when $d=2$. If all dimensions $p$ and ranks $r$ are the same, the TT-parametrization involves $O(2pr + (d-2)pr^2)$ values, which can be significantly smaller than the ones for Tucker-decomposition $O(r^d + dpr)$ and the regular parameterization $O(p^d)$.
	
	In most of the existing literature, the TT-decomposition was considered under the deterministic settings, and the central goal was often to \emph{approximate} the nonrandom high-order tensors by low-dimensional structures \citep{bigoni2016spectral,oseledets2010tt,oseledets2011tensor}. However, in modern applications in data science such as Markov processes, Markov decision processes, and Markov random fields, the (transition) probability tensor computed based on data is often a random realization of the underlying true tensor. In these cases, the \emph{estimation} of the underlying low-dimensional parameters hidden in the noisy observations can be more important: an accurate estimation of the transition tensor renders reliable prediction for future states in high-order Markov chains and better decision-making in high-order Markov decision processes; an accurate estimation of probability tensor sheds light on the underlying relationship among different variables in a random system \citep{novikov2014putting}. To achieve such a goal, it is crucial to develop dimension reduction methods that can incorporate TT-decomposition into probabilistic models. Since singular value decomposition (SVD) is one of the most important dimension reduction methods involving probabilistic models for matrices, and there is no counterpart of it for high-order tensors, we aim to fill this void by developing a statistical framework and a computationally feasible method for high-order tensor SVD in this paper.
	
	\subsection{Problem Formulation}\label{sec:problem-formulation}
	
	This paper focuses on the following \emph{high-order tensor SVD model}. Suppose we observe an order-$d$ tensor $\bfc{Y}$ that contains a hidden tensor-train (TT) low-rank structure:
	\begin{equation}\label{eq:obs_model}
	\bfc{Y} = \bfc{X} + \bfc{Z}, \quad \bfc{Y}, \bfc{X}, \bfc{Z} \in \bbR^{\otimes_{k=1}^d p_k}.
	\end{equation}
	Here, $\bfc{X}$ is TT-decomposable as \eqref{TT} and $\bfc{Z}$ is a noise tensor. Our goal is to estimate $\bfc{X}$ and the TT cores of $\bfc{X}$ based on $\bfc{Y}$. To this end, a straightforward idea is to minimize the approximation error as follows,
	\begin{equation}\label{eq:minimization}
	\widehat{\bfc{X}} = \argmin_{\bfc{A} \text{ is decomposable as } \eqref{TT}}\left\|\bfc{Y} - \bfc{A}\right\|_{\F}^2.
	\end{equation}
	However, the approximation error minimization \eqref{eq:minimization} is highly non-convex and finding the global optimal solution, even if the rank $r_1 = \cdots = r_{d-1}= 1$, is NP-hard in general \citep{hillar2013most}. Instead, a variety of computationally feasible methods have been proposed to approximate the best tensor-train low-rank decomposition in the literature. TT-SVD, a sequential singular value thresholding scheme, was introduced by \cite{oseledets2011tensor} to be discussed in detail later. \cite{oseledets2011tensor} also proposed TT-rounding via sequential QR decompositions, which reduces the TT-rank while ensuring approximation accuracy. \cite{dolgov2014alternating} introduced the alternating minimal energy algorithm to reconstruct a TT-low-rank tensor approximately based on only a small proportion of revealed entries of the target tensor. \cite[Section L.2]{song2019relative} proposed a sketching-based algorithm for fast low TT rank approximation of arbitrary tensors. \cite{bigoni2016spectral} studied the tensor-train decomposition for functional tensors. \cite{li2022faster} proposed the FastTT algorithm for fast sparse tensor decomposition based on parallel vector rounding and TT-rounding. \cite{lubich2013dynamical} studied dynamical approximation with TT format for time-dependent tensors. \cite{grasedyck2015variants} proposed the alternating least squares for tensor completion in the TT format. \cite{bengua2017efficient} studied the completion of low TT rank tensor and the applications to color image and video recovery. \cite{steinlechner2016riemannian} studied the Riemannian optimization methods for TT decomposition and completion. 
	Also see \cite{novikov2020tensor} for a TT decomposition library in TensorFlow. To our best knowledge, the estimation performance of most procedures here remains unclear. Departing from these existing work, in this paper, we make a first attempt to minimize the estimation error of $\bfc{X}$ in addition to achieving the minimal approximation error under possibly random settings.
	
	\subsection{Our Contributions}\label{sec:contribution}
	
	Under Model \eqref{eq:obs_model}, we make the following contributions to high-order tensor SVD in this paper. 
	
	First, we propose a new algorithm, \emph{Tensor-Train Orthogonal Iteration} (TTOI), that provides a computationally efficient estimation of the low-rank TT structure from the noisy observation. The proposed algorithm includes two major steps. First, we obtain initial estimates $\widehat{G}_1^{(0)}, \widehat{\bfc{G}}_2^{(0)},\dots, \widehat{\bfc{G}}_{d-1}^{(0)}, \widehat{G}_d$ by performing forward sequential SVD based on matricizations and projections. This step was known as TT-SVD in the literature \citep{oseledets2011tensor}. Next, we utilize the initialization and perform the newly developed \emph{backward updates} and \emph{forward updates} alternatively and iteratively. The TTOI procedure will be discussed in detail in Section \ref{sec:methodology}. 
	
	To see why the TTOI iterations yield better estimation than the classic TT-SVD method, recall that TT-SVD first performs singular value thresholding on $[\bfc{Y}]_1$, i.e., the unfolding of $\bfc{Y}$, without any additional updates (see detailed procedure of TT-SVD and formal definition of $[\bfc{Y}]_1$ in Section \ref{subsec:notation}), which can be inaccurate since $[\bfc{Y}]_1$, a $p_1$-by-$\prod_{k=2}^dp_k$ matrix, has a great number of columns. In contrast, TTOI iteration utilizes the intermediate outcome of the previous iteration to substantially reduce the dimension of $[\bfc{Y}]_1$ while performing singular value thresholding. In Figure \ref{fig:est_err_tensor_space}, we provide a simple simulation example to show that even one TTOI iteration can significantly improve the estimation of the left singular subspace of $G_1$ (left panel) and the overall tensor $\bfc{X}$ (right panel). Therefore, a one-step TTOI, i.e., the initialization with one TTOI iteration, can be used in practice when the computational cost is a concern.
		\begin{figure}[h!]
			\centering
			\subfigure{\includegraphics[width=0.45\textwidth]{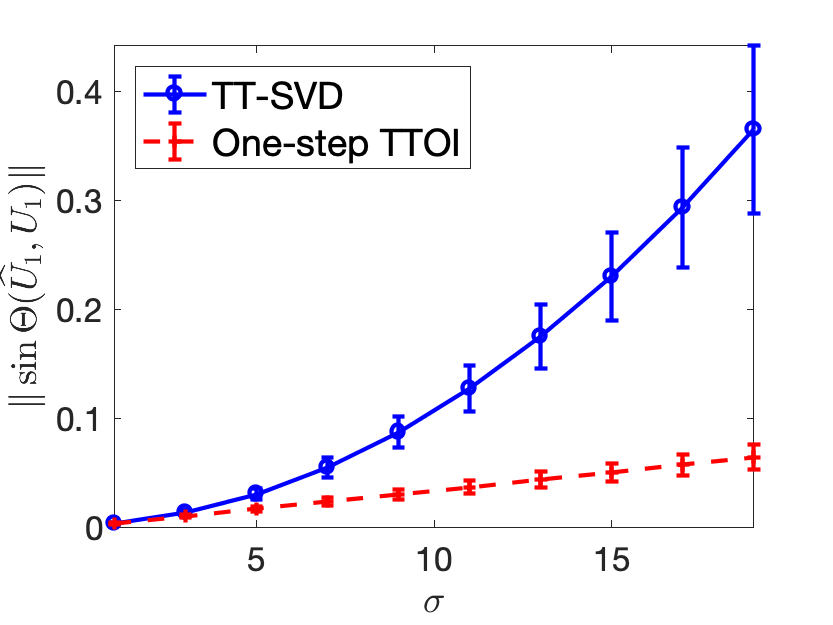}}
			\subfigure
			{\includegraphics[width=0.45\textwidth]{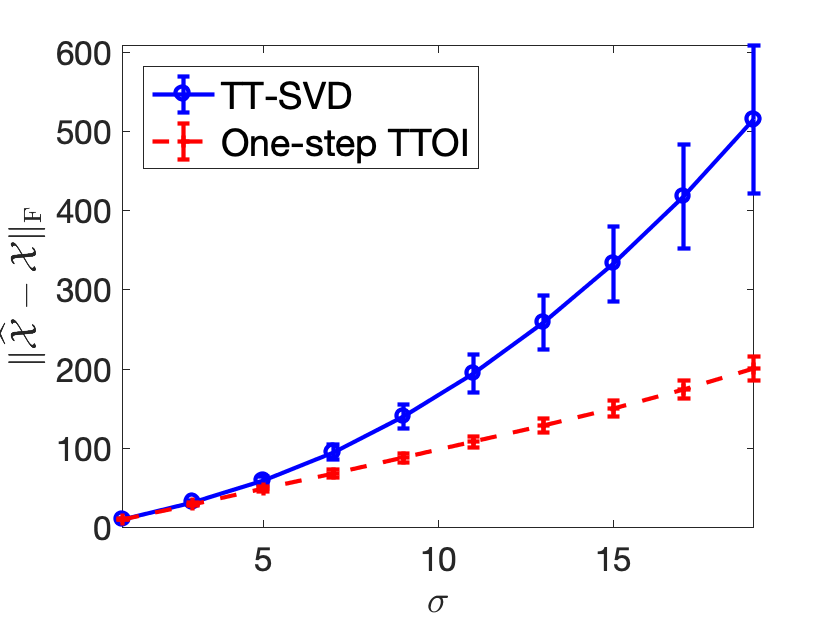}}
			\caption{Average estimation error (dots) and standard deviation (bars) of $\|\sin\Theta(\widehat{U}_1, U_1)\|$ and $\|\widehat{\bfc{X}} - \bfc{X}\|_\F$ by  TT-SVD and one-step TTOI.  Both algorithms are performed based on the observation $\bfc{Y}$ generated from \eqref{eq:obs_model}, where $\bfc{Z}\overset{\text{i.i.d.}}{\sim}N(0,\sigma^2)$, $\bfc{X}$ is a randomly generated order-5 tensor based on \eqref{TT} with $p=20, r=1$, $G_1,\bfc{G}_2,\dots,\bfc{G}_{d-1}, G_d \overset{\text{i.i.d.}}{\sim} N(0,1)$.}\label{fig:est_err_tensor_space}
	\end{figure}
	
	We develop theoretical guarantees for TTOI. In particular, we introduce a series of representation lemmas for tensor matricizations with TT format. Based on them, we develop a deterministic upper bound of estimation error for both forward and backward updates in TTOI iterations. Under the benchmark setting of spiked tensor model, we develop matching upper/lower bounds and prove that the proposed TTOI algorithm achieves the minimax optimal rate of estimation error. To the best of our knowledge, this is the first statistical optimality results for high-order tensors with TT format. We also prove for any high-order tensor, TTOI iteration has monotone decreasing approximation error with respect to the iteration index. 
	
	Moreover, to break the curse of dimensionality in high-order Markov processes, we study the state aggregatable high-order Markov processes and establish a key connection to TT decomposable tensors. We propose a TTOI estimator for the transition probability tensor in high-order state-aggregatable Markov processes and establish the theoretical guarantee. We conduct simulation experiments to demonstrate the performance of TTOI and validate our theoretical findings. We also apply our method to analyze a New York taxi dataset. By modeling taxi trips as trajectories realized from a citywide Markov chain, we found that the Manhattan traffic zone exhibits high-order Markovian dependence and the proposed TTOI reveals latent traffic patterns and meaningful partition of Manhattan traffic zones.  Finally, we discuss several applications that our proposed algorithm is applicable to, including transition probability tensor estimation in high-order Markov decision processes and joint probability tensor estimation in Markov random fields.

	\subsection{Related Literature}\label{sec:related literature}

	In addition to the aforementioned literature on TT decomposition, our work is also related to a substantial body of work on matrix/tensor decomposition and SVD, spiked tensor model, etc. These literature are from a range of communities including  applied mathematics, information theory, machine learning, scientific computing, signal processing, and statistics. Here we try to review existing literature in these communities without claiming this literature survey is exhaustive.
	
	First, the matrix singular value thresholding was commonly used and extensively studied in various problems in data science, including matrix denoising \citep{cai2018rate,candes2013unbiased,donoho2014minimax}, matrix completion \citep{cai2010singular,chatterjee2015matrix,klopp2015matrix,zhang2011lower}, principal component analysis (PCA) \citep{nadler2008finite}, Markov chain state aggregation \citep{zhang2019spectral}. Such the task was also widely considered for tensors of order-3 or higher. In particular, to perform SVD and decomposition for tensors with Tucker low-rank structures, \cite{de2000multilinear,de2000best} introduced the higher-order SVD (HOSVD) and higher-order orthogonal iteration (HOOI). \cite{zhang2018tensor} established the statistical and computational limits of tensor SVD, compared the theoretical properties of HOSVD and HOOI, and proved that HOOI achieves both statistical and computational optimality. \cite{vannieuwenhoven2012new} introduced the sequentially truncated higher-order singular value decomposition (ST-HOSVD). \cite{zhang2019optimal} introduced a thresholding \& projection based algorithm for sparse tensor SVD. A non-exhaustive list of methods for SVD and decomposition for tensors with CP low-rank structures include alternating least squares \citep{kolda2009tensor,sharan2017orthogonalized}, eigendecomposition-based approach \citep{leurgans1993decomposition}, enhanced line search \citep{rajih2008enhanced}, power iteration with SVD-based initialization \citep{anandkumar2014tensor}, simultaneous diagonalization and higher-order SVD \citep{colombo2016tensor}.
	
	In addition, the spiked tensor model and tensor principal component analysis (tensor PCA) are widely discussed in the literature. \cite{anandkumar2017homotopy,arous2019landscape,hopkins2015tensor,luo2020tensor,perry2020statistical,richard2014statistical} considered the statistical and computational limits of rank-$1$ spiked tensor model. \cite{lesieur2017statistical} studied the statistical and computational phase transitions and theoretical properties of the approximate message passing algorithm (AMP) under a Bayesian spiked tensor model. \cite{allen2012sparse,allen2012regularized} developed the regularization-based methods for tensor PCA. \cite{liu2018improved,lu2016tensor,lu2019tensor,zhou2017outlier} studied the robust tensor PCA to handle the possible outliers from the tensor observation. 
	
	Different from Tucker and CP decompositions, which have been a pinpoint in the enormous existing literature on tensors, we focus on the TT-structure associated with high-order tensors for the following reasons: (1) Tucker and CP decompositions do not involve the sequential structure of different modes, i.e., the Tucker and CP decompositions still hold if the $d$ modes are arbitrarily permuted. While in applications such as high-order Markov process, high-order Markov decision process, and fully connected layers of deep neural networks, the order of different modes can be crucial; (2) the number of entries involved in the low-Tucker-rank parameterization grows exponentially with respect to the order $d$ ($r^d$); (3) methods that explore CP low-rank structure can be numerically unstable for high-order tensors in computation as pointed out by \cite{oseledets2010tt}. In comparison, the TT-structure incorporates the order of different modes sequentially and involves much fewer parameters for high-order tensors, which renders it more suitable in many scenarios. 
	
	In Section \ref{sec:example}, we will further discuss the application of TTOI on high-order Markov processes and state aggregation. This problem is related to a body of literature on dimension reduction and state aggregation for Markov processes that we will discuss in Section \ref{sec:example}.
	
	\subsection{Organization}\label{sec:organization}
	
	The rest of the article is organized as follows. In Section \ref{sec:methodology}, after a brief  introduction of the notation and preliminaries, we introduce the procedure of the tensor-train orthogonal iteration. The theoretical results, including three representation lemmas, a general estimation error bound, and the minimax optimal upper and lower bounds under the spiked tensor model, are provided in Sections \ref{sec:theory} and \ref{tensor-spiked-model}. The application to high-order Markov chains is discussed in Section \ref{sec:example}. The simulation and real data analysis are provided in Sections \ref{sec:simulation} and \ref{sec:real-data}, respectively. Discussions and further applications to Markov random fields and high-order Markov decision processes are briefly discussed in Section \ref{sec:discussion}. All technical proofs are provided in Section \ref{sec:proof}.
	
	\section{Procedure of Tensor-Train Orthogonal Iteration}\label{sec:methodology}
	 
	\subsection{Notation and Preliminaries}\label{subsec:notation}
	
	We first introduce the notation and preliminaries to be used throughout the paper. We use the lowercase letters, e.g., $x, y, z,$ to denote scalars or vectors. We use $C, c, C_0, c_0, \dots$ to denote generic constants, whose actual values may change from line to line. A random variable $z$ is $\sigma$-sub-Gaussian if $\bbE e^{t(z - \bbE z)} \leq e^{\sigma^2 t^2/2}$ for any  $t \in \bbR.$ We say $a \lesssim b$ or $a = O(b)$ if $a \leq Cb$ for some uniform constant $C > 0$. We write $a = \widetilde{O}(b)$ if $a = O(b\log^{C'}(b))$ for constant $C'>0$. The capital letters, e.g., $X, Y, Z$, are used to denote matrices. Specifically, $\mathbb{O}_{p, r} := \{U \in \bbR^{p \times r}: U^\top U = I_r\}$ is the set of all $p$-by-$r$ matrices with orthogonal columns. For $U \in \mathbb{O}_{p, r}$, let $U_{\perp} \in \mathbb{O}_{p, p-r}$ be the orthonormal complement of $U$, and let $P_{U} = UU^\top$ denote the projection matrix onto the column space of $U$. For any matrix $A \in \bbR^{p_1 \times p_2}$, let $A = \sum_{i=1}^{p_1\wedge p_2}s_i u_iv_i^\top$ be the singular value decomposition, where $s_{1}(A) \geq \cdots \geq s_{p_1 \wedge p_2}(A) \geq 0$ are the singular values of $A$ in non-increasing order. Define $s_{\min}(A) = s_{p_1 \wedge p_2}(A)$, $\text{SVD}_{r}^L(A) = [u_1 \ldots u_r] \in \mathbb{O}_{p_1, r}$, and $\text{SVD}_{r}^R(A) = [v_1 \ldots v_r] \in \mathbb{O}_{p_2, r}$ be the smallest non-trivial singular value, leading $r$ left singular vectors, and leading $r$ right singular vectors of $A$, respectively. We also write $\text{SVD}^L(A) = \text{SVD}_{p_1 \wedge p_2}^L(A)$ and $\text{SVD}^R(A)=\text{SVD}_{p_1 \wedge p_2}^L(A)$ as the collection of all left and right singular vectors of $A$, respectively. Define the Frobenius and spectral norms of $A$ as $\|A\|_{\F} = \sqrt{\sum_{i=1}^{p_1}\sum_{j=1}^{p_2}A_{ij}^2} = \sqrt{\sum_{i=1}^{p_1 \wedge p_2}s_i^2(A)}$ and $\|A\| = s_{1}(A) = \max_{x \in \bbR^{p_2}}\|Ax\|_2/\|x\|_2$.  
	For any two matrices $U \in \bbR^{m_1 \times n_1}$ and $V \in \bbR^{m_2 \times n_2}$, let 
	\begin{equation*}
		U \otimes V = 
		\begin{bmatrix}
		U_{11}\cdot V & \dots & U_{1n_1}\cdot V\\
		\vdots &  & \vdots\\
		U_{m_1 1}\cdot V & \dots & U_{m_1 n_1}\cdot V
		\end{bmatrix} \in\mathbb{R}^{(m_1m_2)\times(n_1n_2)}
	\end{equation*}
	be their Kronecker product. To quantify the distance among subspaces, we define the principle angles between $U, \widehat{U} \in \mathbb{O}_{p ,r}$ as an $r$-by-$r$ diagonal matrix: $\Theta(U, \widehat{U}) = \diag(\text{arccos}(s_1), \dots, \text{arccos}(s_r))$, where $s_1 \geq \cdots \geq s_r \geq 0$ are the singular values of $U^\top \widehat{U}$. Define the sin$\Theta$ norm as
	\begin{equation*}
		\|\sin\Theta(U, \widehat{U})\| = \|\diag\left(\sin(\text{arccos}(s_1)), \dots, \sin(\text{arccos}(s_r))\right)\| = \sqrt{1 - s_r^2}.
	\end{equation*}
	
	The boldface calligraphic letters, e.g., $\bfc{X}, \bfc{Y}, \bfc{Z}$, are used to denote tensors. For an order-$d$ tensor $\bfc{X} \in \mathbb{R}^{\otimes_{i=1}^d p_i}$ and $1 \leq k \leq d-1$, we define $[\bfc{X}]_k \in \bbR^{(p_1\times \cdots \times p_k) \times (p_{k+1}\cdots p_d)}$ as the \emph{sequential unfolding} of $\bfc{X}$ with rows enumerating all indices in Modes $1, \dots, k$ and columns enumerating all indices in Modes $(k + 1), \cdots, d$, respectively. That is, for any $1 \leq k \leq d$ and $1\leq i_k\leq p_k$, 
	$$([\bfc{X}]_k)_{(i_k-1)p_1\cdots p_{k-1} + (i_{k-1}-1)p_1\cdots p_{k-2} + \cdots + i_1, (i_d - 1)p_{k+1}\cdots p_{d-1} + (i_{d-1}-1)p_{k+1}\cdots p_{d-2} + \cdots + i_{k+1}} = \bfc{X}_{i_1 \dots i_d}.$$
	Following the convention of \texttt{reshape} function in MATLAB, we define the reshape of any matrix $X$ of dimension $p_1\cdots p_k\times p_{k+1}\cdots p_d$ as an inverse operation of tensor matricization: $\bfc{X}=\mathrm{Reshape}(X,p_1,p_2,\dots,p_d)$ if $X=[\bfc{X}]_k$. For any two matrices $A\in \mathbb{R}^{q_1\times q_2q_3}$ and $\widetilde{A}\in \mathbb{R}^{q_1q_2\times q_3}$, we denote $\widetilde{A} = \mathrm{Reshape}(A, q_1q_2, q_3)$ and $A = \mathrm{Reshape}(\widetilde{A}, q_1, q_2q_3)$ if and only if $$\widetilde{A}_{(i_2-1)p_1 + i_1, i_3} = A_{i_1, (i_3-1)p_2 + i_2}, \quad \forall 1 \leq i_j \leq q_j, \quad j =1,2,3.$$
	
	We also define the tensor Frobenius norm of $\bfc{X}$ as $\|\bfc{X}\|_{\F}^2 = \sum_{i_1 = 1}^{p_1}\cdots\sum_{i_d = 1}^{p_d} \bfc{X}_{i_1, \dots, i_d}^2.$ For any matrix $A \in \bbR^{p_1 \times p_2}$ and any tensor $\bfc{B} \in \bbR^{p_1 \times \cdots \times p_d}$, let vec$(A)$ and vec$(\bfc{B})$ be the vectorization of $A$ and $\bfc{B}$, respectively. Formally, 
	$$\left(\text{vec}(\bfc{B})\right)_{(i_d-1)p_1\cdots p_{d-1} + (i_{d-1}-1)p_1\cdots p_{d-2} + \cdots + i_1} = \bfc{B}_{i_1, \dots, i_d}, \quad 1\leq i_k\leq p_k, \quad k=1,\ldots, d.$$
	
	\subsection{Procedure of Tensor-Train Orthogonal Iteration}\label{subsec:procedure}
	
	We are now in position to introduce the procedure of Tensor-Train Orthogonal Iteration (TTOI). The pseudocode of the overall procedure is given in Algorithm \ref{algorithm:iteration}. TTOI includes three main parts: we first run \emph{initialization}, then perform \emph{backward update} and \emph{forward update} alternatively and iteratively.
	\begin{varalgorithm}{1}
		\caption{Tensor-Train Orthogonal Iteration (TTOI)} \label{algorithm:iteration}
		\hspace*{\algorithmicindent} \textbf{Input:} $\bfc{Y}, \{p_k\}_{k=1}^d, \{r_k\}_{k=1}^{d-1}$, increment tolerance $\varepsilon > 0$, maximum number of iterations $t_{\max}$
		\begin{algorithmic}[1]
			\State Obtain Initialization $\widetilde{R}_1^{(0)}, \dots, \widetilde{R}_{d-1}^{(0)}, \widehat{\bfc{X}}^{(0)}$ by Algorithm \ref{algorithm:TTSVD}
			\For {$t = 1, \ldots, t_{\max}$}
			\If {$t$ is odd}
			\State Apply Algorithm \ref{algorithm:backward} with input $\widetilde{R}_1^{(t-1)}, \dots, \widetilde{R}_{d-1}^{(t-1)}$ to obtain $\widehat{V}^{(t)}_1, \dots, \widehat{V}^{(t)}_d, \widehat{\bfc{X}}^{(t)}$
			\Else
			\State Apply Algorithm \ref{algorithm:forward} with input $\widehat{V}^{(t-1)}_1, \dots, \widehat{V}^{(t-1)}_d$ to obtain $\widetilde{R}_1^{(t)}, \dots, \widetilde{R}_{d-1}^{(t)}, \widehat{\bfc{X}}^{(t)}$
			\EndIf
			\State {\bf If } {$\|\widehat{\bfc{X}}^{(t)}\|_{\F}^2 - \|\widehat{\bfc{X}}^{(t-1)}\|_{\F}^2 \leq \varepsilon$} {\bf then} ~~ {break from the for loop}
			\EndFor
		\end{algorithmic} 
		\hspace*{\algorithmicindent} \textbf{Output:} $\widehat{\bfc{X}}=\widehat{\bfc{X}}^{(t)}$
	\end{varalgorithm}
	
	\begin{itemize}[leftmargin = *]
		\item \textbf{Part 1: Initialization.} First, we obtain an initial estimate of TT-cores $G_1, \bfc{G}_2, \dots, \bfc{G}_{d-1}, G_d$. This step is the tensor-train-singular value decomposition (TT-SVD) originally introduced by \cite{oseledets2011tensor}.
		\begin{description}[leftmargin = *]
			\item[(i)] Let $R_1^{(0)}$ be the unfolding of $\bfc{Y}$ along Mode 1. We compute the top-$r_1$ SVD of $R_1^{(0)}$. Let $\widehat{U}_1^{(0)} \in \mathbb{O}_{p_1, r_1}$ be the first $r_1$ left singular vectors of $R_1^{(0)}$ and calculate $\widetilde{R}_1^{(0)} = (\widehat{U}_1^{(0)})^\top R_1^{(0)} \in \bbR^{r_1 \times (p_2\dots p_d)}$. Then, $\widehat{U}_1^{(0)}$ is an initial estimate of the subspace that $G_1$ lies in and $\widetilde{R}_1^{(0)}$ can be seen as the projection residual.
			\item[(ii)] Next, we realign the entries of $\widetilde{R}_1^{(0)} \in \bbR^{r_1 \times (p_2\dots p_d)}$ to $R_2^{(0)} \in \bbR^{(r_1p_2) \times (p_3\dots p_d)}$, where the rows and columns of $R_2^{(0)}$ correspond to indices of Modes-1, 2 and Modes-$3,\ldots, d$, respectively. Then, we evaluate the top-$r_2$ SVD of $R_2^{(0)}$. Let $\widehat{U}_2^{(0)}$ be the first $r_2$ left singular vectors of $R_2^{(0)}$ and evaluate $\widetilde{R}_2^{(0)} = (\widehat{U}_2^{(0)})^\top R_2^{(0)} \in \bbR^{r_2 \times p_3\dots p_d}$. Again, $\widehat{U}_2^{(0)}$ is an estimate of the singular subspace that $\mathcal{G}_2$ lies on and $\widetilde{R}_2^{(0)}$ is the projection residual for the next calculation.
	 		\item[(iii)] We apply Step (ii) on $\widetilde{R}_2^{(0)}$ to obtain $\widehat{U}_3^{(0)}\in \mathbb{O}_{r_2p_3, r_3}$ and $\widetilde{R}_3^{(0)}\in\mathbb{R}^{r_3\times (p_4\cdots p_d)}$; $\ldots$; apply Step (ii) on $\widetilde{R}_{d-2}^{(0)}$ to obtain $\widehat{U}_{d-1}^{(0)}\in \mathbb{O}_{r_{d-2}p_{d-1}, r_{d-1}}$ and $\widetilde{R}_{d-1}^{(0)}\in \mathbb{R}^{r_{d-1}\times p_d}$. Then we reshape matrix $\widehat{U}_k^{(0)} \in \bbR^{(p_kr_{k-1}) \times r_k}$ to tensor $\widehat{\bfc{U}}_k^{(0)} \in \bbR^{r_{k-1} \times p_k \times r_k}$ for $k = 2, \dots, d - 1$. Now,
	 		$\left(\widehat{U}_1^{(0)}, \widehat{\bfc{U}}_2^{(0)}, \dots, \widehat{\bfc{U}}_{d-1}^{(0)}, \widetilde{R}_{d-1}^{(0)\top}\right)$
	 		yield the initial estimates of TT-cores of $\bfc{X}$ and we expect that
			$$\bfc{X}\approx \bfc{X}^{(0)} = \llbracket\widehat{U}_{1}^{(0)}, \widehat{\bfc{U}}_{2}^{(0)}, \cdots,  \widehat{\bfc{U}}_{d-1}^{(0)}, \widetilde{R}_{d-1}^{(0)}\rrbracket.$$
 	\end{description}
 		The initialization step is summarized to Algorithm \ref{algorithm:TTSVD} and illustrated in Figure \ref{illu:TT_SVD}. In summary, we perform SVD on some ``residual" $R_k^{(0)}$ sequentially for $k=1,\ldots, d-1$. As will be shown in Lemma \ref{lm:R_0}, $R_k^{(0)}$ satisfies 
		$$R_k^{(0)} = (I_{p_k} \otimes \widehat{U}^{(0)\top}_{k - 1})\cdots (I_{p_2\cdots p_k} \otimes \widehat{U}^{(0)\top}_{1})[\bfc{Y}]_k,$$ 
		where $[\bfc{Y}]_k \in \bbR^{(p_1\cdots p_k) \times (p_{k+1}\cdots p_d)}$ is the $k$th sequential unfolding of $\bfc{Y}$ (see definition in Section \ref{subsec:notation}). This quantity plays a key role in the backward update next. 
 	    \begin{varalgorithm}{1(a)}
 	    	\caption{Initialization (TT-SVD \citep{oseledets2011tensor})} \label{algorithm:TTSVD}
 	    	\hspace*{\algorithmicindent} \textbf{Input:} $\bfc{Y}$, $\{r_k\}_{k=1}^{d-1}$, $\{p_k\}_{k=1}^d$
 	    	\begin{algorithmic}[1]
 	    		\State Calculate $R_1^{(0)} = [\bfc{Y}]_1$
 	    		\For {$k=1,\ldots,d-1$}
 	    		\State $\widehat{U}_k^{(0)} = \text{SVD}_{r_k}^L(R_k^{(0)})$
 	    		\State \textbf{If} {$k=1$} ~ \textbf{then} ~ $U_{\text{prod}}^{(0)} = \widehat{U}_k^{(0)}$ ~ \textbf{else} ~ $U_{\text{prod}}^{(0)} = (I_{p_k} \otimes U_{\text{prod}}^{(0)})\widehat{U}_k^{(0)}$
 	    		\State $\widetilde{R}_k^{(0)} = \widehat{U}_k^{(0)\top} R_k^{(0)}$
 	    		\State \textbf{If} {$k < d - 1$} ~ \textbf{then}~ $R_{k+1}^{(0)} = \mathrm{reshape}(\widetilde{R}_k^{(0)}, r_kp_{k+1}, p_{k+2}\cdots p_d)$
 	    		\EndFor
 	    		\State $[\widehat{X}^{(0)}]_{d-1} = U_{\text{prod}}^{(0)}\widetilde{R}_{d-1}^{(0)}$
 	    		\State Reshape $[\widehat{X}^{(0)}]_{d-1} \in \bbR^{(p_1\cdots p_{d-1}) \times p_d}$ to $\widehat{\bfc{X}}^{(0)} \in \bbR^{p_1 \times \cdots \times p_d}$
 	    	\end{algorithmic}
 	    	\hspace*{\algorithmicindent} \textbf{Output:} $\widetilde{R}_1^{(0)}, \dots, \widetilde{R}_{d-1}^{(0)}, \widehat{\bfc{X}}^{(0)}$
 	    \end{varalgorithm}
 	    \begin{figure}[h!]
 	    	\centering
 	    	\includegraphics[width=.9\linewidth]{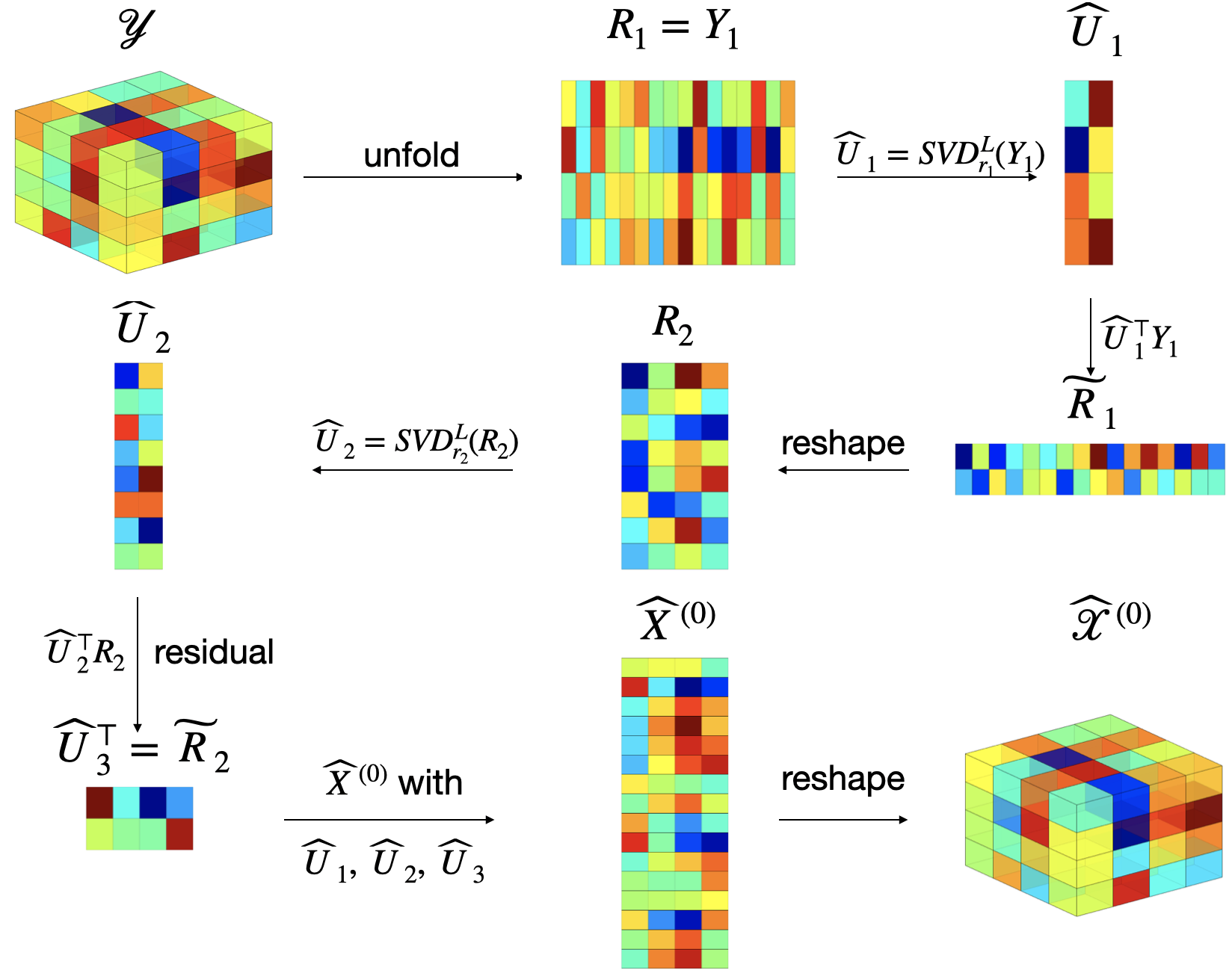}
 	    	\caption{A Pictorial Illustration of Initialization (Algorithm \ref{algorithm:TTSVD}, $d=3$)}\label{illu:TT_SVD}
 	    \end{figure}

		\begin{varalgorithm}{1(b)}
			\caption{TT-Backward Update} \label{algorithm:backward}
			\hspace*{\algorithmicindent} \textbf{Input:} $\bfc{Y}, \{r_k\}_{k=1}^{d-1}, \{p_k\}_{k=1}^d, \widetilde{R}_1^{(t-1)}, \dots, \widetilde{R}_{d-1}^{(t-1)}$ for odd iteration number $t$
			\begin{algorithmic}[1]
				\For {$k=1,\ldots,d-1$}
				\If {$k=1$} 
				\State $\widehat{V}_{d-k+1}^{(t)} = \text{SVD}_{r_{d-k}}^{R}\left(\widetilde{R}_{d-k}^{(t-1)}\right)$, \quad $V_{\text{prod}}^{(t)} = \widehat{V}_{d-k+1}^{(t)}$
				\Else
				\State $\widehat{V}_{d-k+1}^{(t)} = \text{SVD}_{r_{d-k}}^{R}\left( \widetilde{R}_{d-k}^{(t-1)}(V_{\text{prod}}^{(t)} \otimes I_{p_{d-k+1}})\right)$,\quad $V_{\text{prod}}^{(t)} = (V_{\text{prod}}^{(t)} \otimes I_{p_{d-k+1}})\widehat{V}_{d-k+1}^{(t)}$
				\EndIf
				\EndFor
				\State $\widehat{V}_{1}^{(t)} = [\bfc{Y}]_1V_{\text{prod}}^{(t)}$, \quad $[\widehat{X}^{(t)}]_1 = \widehat{V}_{1}^{(t)}V_{\text{prod}}^{{(t)}\top}$, \quad  reshape $[\widehat{X}^{(t)}]_1 \in \bbR^{p_1 \times (p_2\cdots p_d)}$ to $\widehat{\bfc{X}}^{(t)} \in \bbR^{p_1 \times \cdots \times p_d}$
			\end{algorithmic} 
			\hspace*{\algorithmicindent} \textbf{Output:} $\widehat{V}_1^{(t)}, \dots, \widehat{V}_d^{(t)}, \widehat{\bfc{X}}^{(t)}$
		\end{varalgorithm}

		\begin{varalgorithm}{1(c)}
		\caption{TT-Forward Update} \label{algorithm:forward}
		\hspace*{\algorithmicindent} \textbf{Input:} $\bfc{Y}, \{r_k\}_{k=1}^{d-1}, \{p_k\}_{k=1}^d, \widehat{V}_1^{(t-1)}, \dots, \widehat{V}_d^{(t-1)}$ for even iteration number $t$
		\begin{algorithmic}[1]
			\State $R_1^{(t)} = [\bfc{Y}]_1$ 
			\For {$k=1,\ldots,d-1$}
			\If {$k=1$}
			\State $\widehat{U}_{1}^{(t)} = \text{SVD}_{r_1}^{L}\left(\widehat{V}_{1}^{(t)}\right)$, \quad $U_{\text{prod}}^{(t)} = \widehat{U}_{1}^{(t)}$
			\Else
			\State $\widehat{U}_{k}^{(t)} = \text{SVD}_{r_{k}}^{R}\left(R_k^{(t)}(\widehat{V}_d^{(t-1)} \otimes I_{p_{k+1}\dots p_{d-1}})\cdots (\widehat{V}_{k+2}^{(t-1)} \otimes I_{p_{k+1}})\widehat{V}_{k+1}^{(t-1)}\right)$
			\State $U_{\text{prod}}^{(t)} = (I_{p_k} \otimes U_{\text{prod}}^{(t)})\widehat{U}_{k}^{(t)}$
			\EndIf
			\State $\widetilde{R}_k^{(t)} = \widehat{U}_k^{(t)\top} R_k^{(t)}$
			\State \textbf{If} {$k < d - 1$} ~ \textbf{then} ~ $R_{k+1}^{(t)} = \text{reshape}\left(\widetilde{R}_k^{(t)}, r_kp_{k+1}, p_{k+2}\cdots p_d\right)$
			\EndFor
			\State $[\widehat{X}^{(t)}]_{d-1} = U_{\text{prod}}^{(t)}\widetilde{R}_{d-1}^{(0)}$,\quad  reshape $[\widehat{X}^{(t)}]_{d-1} \in \bbR^{(p_1\cdots p_{d-1}) \times p_d}$ to $\widehat{\bfc{X}}^{(t)} \in \bbR^{p_1 \times \cdots \times p_d}$
		\end{algorithmic} 
		\hspace*{\algorithmicindent} \textbf{Output:} $\widetilde{R}_1^{(t)}, \dots, \widetilde{R}_{d-1}^{(t)}, \widehat{\bfc{X}}^{(t)}$
	\end{varalgorithm}

		\begin{figure}[h!]
	 		\centering
	 		\includegraphics[width=.9\linewidth]{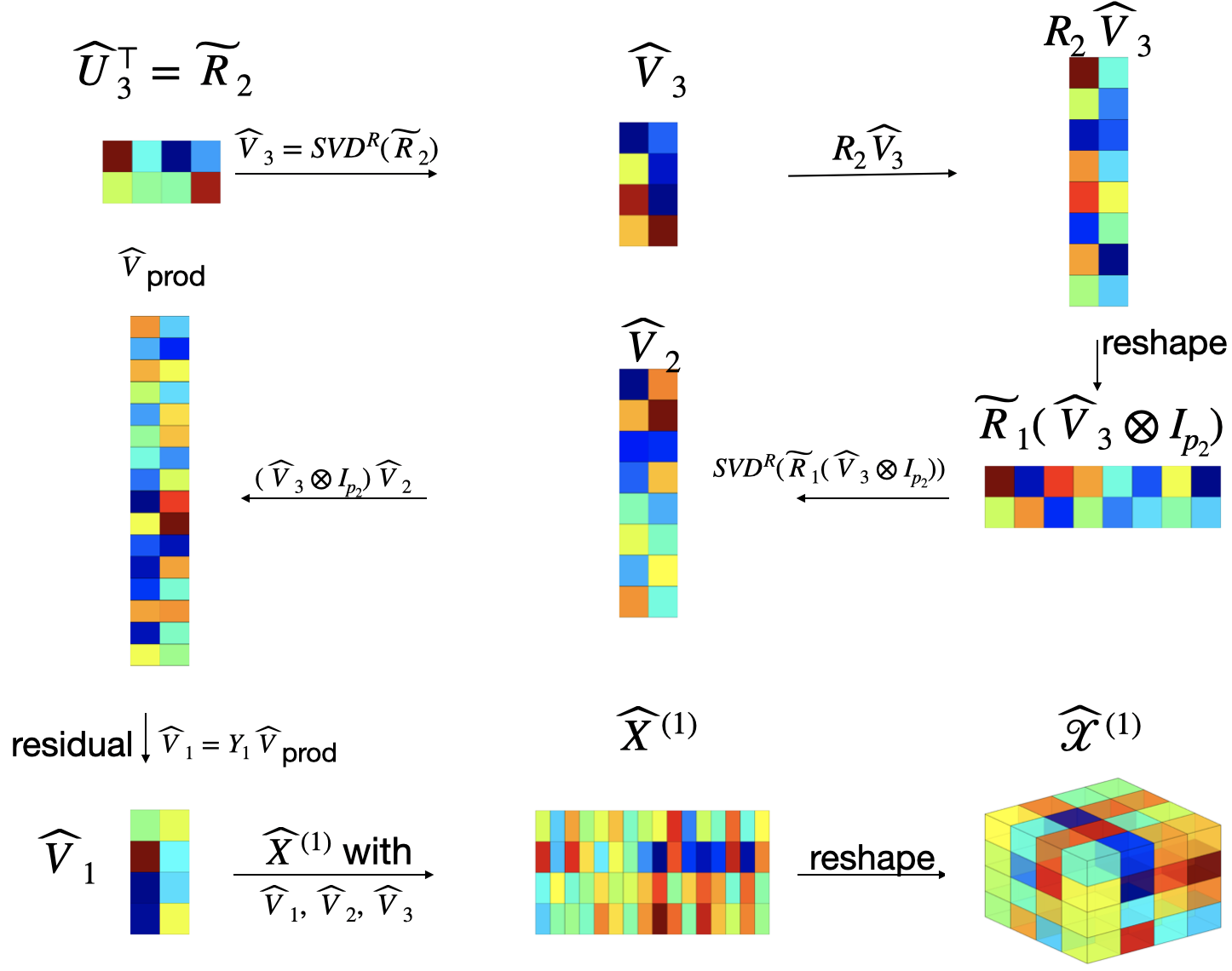}
	 		\caption{A pictorial illustration of TT-Backward update (Algorithm \ref{algorithm:backward},  $d=3$)}\label{illu:TT_Backward}
	 	\end{figure}
	 	The initialization step mainly focuses on the left singular spaces of $[\bfc{X}]_k$ while ignoring the information included in the right singular spaces. Due to this fact, we develop the following new backward update that utilizes both the left and right singular space estimates from the previous step to refine our estimates. Similarly, we can also perform a forward update to further improve the outcome of backward update, and then iteratively alternate between backward and forward updates. The detailed descriptions of these two updates are presented as follows, and a further explanation is given in Remark~\ref{rmk:backward}.
	 	
	 	\item \textbf{Part 2: Backward update.} For iterations $t=1,3,5, \ldots$, we perform backward update, i.e., to sequentially obtain $\widehat{V}^{(t)}_d, \ldots, \widehat{V}^{(t)}_2$ based on the intermediate results from the $(t-1)$st iteration (0th iteration is the initialization). The pseudocode of backward update is provided in Algorithm \ref{algorithm:backward}. The calculation in Algorithm \ref{algorithm:backward} is equivalent to
	 	$$\widehat{V}_d^{(t)} = \SVD^R\left(\widetilde{R}_{d-1}^{(t-1)}\right),$$
	 	$$\widehat{V}_k^{(t)} = \SVD^R\left(\widetilde{R}_{k-1}^{(t-1)}(\widehat{V}_d^{(t)} \otimes I_{p_k\dots p_{d-1}})\cdots (\widehat{V}_{k + 1}^{(t)} \otimes I_{p_k})\right), \quad k=d-1,\ldots, 2,$$
	 	and 
	 	$$\widehat{V}_1^{(t)} = [\bfc{Y}]_1(\widehat{V}_d^{(t)} \otimes I_{p_2\dots p_{d-1}})\cdots (\widehat{V}_{3}^{(t)} \otimes I_{p_2})\widehat{V}_{2}^{(t)} \in \bbR^{p_1 \times r_1}.$$ 
	 	Here,
	 	$$\widetilde{R}_k^{(t-1)} = (\widehat{U}_k^{(t-1)})^\top(I_{p_k} \otimes \widehat{U}^{(t-1)\top}_{k - 1})\cdots (I_{p_2\cdots p_k} \otimes \widehat{U}^{(t-1)\top}_{1})[\bfc{Y}]_k$$ 
	 	are the projection residual term in the intermediate outcome of the $(t-1)$st iteration. Then, we reshape $\widehat{V}_k^{(t)\top} \in \bbR^{r_{k-1} \times (p_kr_k)}$ to  $\widehat{\bfc{V}}_k^{(t)} \in \bbR^{r_{k-1} \times p_k \times r_k}$. The backward updated estimate is
	 	$$\widehat{\bfc{X}}^{(t)} =  \llbracket\widehat{V}_1^{(t)}, \widehat{\bfc{V}}_2^{(t)}, \dots, \widehat{\bfc{V}}_{d-1}^{(t)}, \widehat{V}_{d}^{(t)}\rrbracket.$$
	 	\begin{rmk}[Interpretation of backward update]\label{rmk:backward}
	 		The backward updates utilize and extract the right singular vectors of the intermediate products of the $(t-1)$st iteration, 
	 		$$\widetilde{R}_k^{(t-1)} = (\widehat{U}_k^{(t-1)})^\top(I_{p_k} \otimes \widehat{U}^{(t-1)\top}_{k - 1})\cdots (I_{p_2\cdots p_k} \otimes \widehat{U}^{(t-1)\top}_{1})[\bfc{Y}]_k,$$ 
	 		as opposed to the entire data $[\bfc{Y}]_k$. Such a dimension reduction scheme is the key to the backward update: it can simultaneously reduce the dimension of the matrix of interest, $[\bfc{Y}]_k$, and the noise therein, while preserving the signal strength. 
	 		Different from the initialization in Step 1, the backward update utilizes the information from both the forward and backward singular subspaces of the tensor-train structure of $\bfc{X}$. See Section \ref{sec:theory} for more illustration. 
	 	\end{rmk}
	 	
	 	\item \textbf{Part 3: Forward Update.} For iteration $t=2,4,6,\ldots$, we perform forward update, i.e., to sequentially obtain $\widehat{U}_1^{(t)}, \ldots, \widehat{U}_d^{(t)}$ based on the intermediate results from the $(t-1)$st iteration. Essentially, the forward update can be seen as a reversion of the backward update by flipping all modes of tensor $\bfc{Y}$. The pseudocode of this procedure is collected in Algorithm \ref{algorithm:forward}. Recall $[\bfc{Y}]_1(\widehat{V}_d^{(t-1)} \otimes I_{p_2\dots p_{d-1}})\cdots (\widehat{V}_{3}^{(t-1)} \otimes I_{p_2})\widehat{V}_{2}^{(t-1)}$ is the intermediate product from the $(t-1)$st update. We sequentially compute
	 	$$\widehat{U}^{(t)}_1 = \SVD^L \left([\bfc{Y}]_1(\widehat{V}_d^{(t - 1)} \otimes I_{p_2\dots p_{d-1}})\cdots (\widehat{V}_{3}^{(t-1)} \otimes I_{p_2})\widehat{V}_{2}^{(t-1)}\right);$$
	 	$$\widehat{U}_k^{(t)} = \SVD^L\left((I_{p_k} \otimes \widehat{U}^{(t)\top}_{k - 1})\cdots (I_{p_2\cdots p_k} \otimes \widehat{U}^{(t)\top}_{1})[\bfc{Y}]_k(\widehat{V}_d^{(t-1)} \otimes I_{p_{k+1}\dots p_{d-1}})\cdots (\widehat{V}_{k + 2}^{(t-1)} \otimes I_{p_{k+1}})\widehat{V}_{k + 1}^{(t - 1)}\right)$$ 
	 	for $k=2,\ldots, d-1$, and
	 	$$\widehat{U}_d^{(t)} = [(\widehat{U}_{d-1}^{(t)})^\top(I_{p_{d-1}} \otimes (\widehat{U}_{d-2}^{(t)})^\top)\cdots(I_{p_{d-1}\dots p_2} \otimes (\widehat{U}_{1}^{(t)})^\top)[\bfc{Y}]_{d-1}]^\top \in \bbR^{p_d \times r_{d-1}}.$$
	 	Reshape $\widehat{U}_k^{(t)} \in \bbR^{(p_kr_{k-1}) \times r_k}$ to $\widehat{\bfc{U}}_k^{(t)} \in \bbR^{r_{k-1} \times p_k \times r_k}$ for $k = 2, \dots, d - 1$.
	 	Then, compute 
	 	$$\widehat{\bfc{X}}^{(t)} = \llbracket\widehat{U}_1^{(t)}, \widehat{\bfc{U}}_2^{(t)}, \dots, \widehat{\bfc{U}}_{d-1}^{(t)}, \widehat{U}_{d}^{(t)}\rrbracket.$$
\end{itemize}
We will explain the algebraic schemes in the TTOI procedure through several representation lemmas in Section \ref{sec:algebraic-representation-lemma}. We will also show in Theorem \ref{thm:convergence} that the objective function $\|\bfc{Y} - \widehat{\bfc{X}}^{(t)}\|_{\F}^2$ is monotone decreasing with respect to the iteration index $t$. In the large-scale scenarios that performing iterations is beyond the capacity of computing, we can reduce the number of iterations, and even to $t_{\max}=1$, i.e., the one-step iteration, which have often yielded sufficiently accurate estimation as we will illustrate in both theory and simulation studies. Such the phenomenon has been recently discovered for HOOI in the Tucker low-rank tensor decomposition \citep{luo2020sharp}.

\begin{rmk}[Computational and storage costs of TTOI]
	We consider the computational and storage costs of TTOI on the $p$-dimensional, rank-$r$, order-$d$, and dense tensor. Since computing the first $r$ singular vectors of an $m \times n$ matrix via block power method requires $\widetilde{O}(mnr)$ operations, initialization costs $\widetilde{O}(p^dr)$ operations, each iteration of TTOI, including  forward and backward updates, costs $O(p^dr)$. Therefore, the total number of operations of TTOI  with $T$ iterations is $\widetilde{O}(p^dr) + O(Tp^dr)$, which is not significantly more than the number of elements of the target tensor. Moreover, TTOI requires $O(p^d)$ storage cost, which is not significantly more than the storage cost of the original tensor.
\end{rmk}

\section{Theoretical Analysis}\label{sec:theory}

This section is devoted to the theoretical analysis of the proposed procedure. For convenience, we introduce the following two abbreviations for matrix sequential products: for $M_i \in \bbR^{(p_ir_{i-1}) \times r_i}, 1 \leq i \leq d-1$ and $B_j \in \bbR^{(r_jp_j) \times r_{j-1}}, 2 \leq j \leq d$, we denote
$$M_{\text{prod}, k}^{(L)} = (I_{p_2\cdots p_k} \otimes M_1)\cdots(I_{p_k} \otimes M_{k-1})M_k \in \bbR^{(p_1\cdots p_k) \times r_k}, \quad \forall 1 \leq k \leq d-1,$$
$$B_{\text{prod}, k}^{(R)} = (B_d \otimes I_{p_k\cdots p_{d-1}})\cdots(B_{k+1} \otimes I_{p_k})B_k \in \bbR^{(p_k\cdots p_d) \times r_{k-1}}, \quad \forall 2 \leq k \leq d.$$
Equivalently, $M_{\text{prod}, k}^{(L)}$ and $B_{\text{prod}, k}^{(R)}$ can be defined sequentially as
\begin{equation*}
	M_{\text{prod}, 1}^{(L)} = M_1, \quad M_{\text{prod}, k+1}^{(L)} = (I_{p_{k+1}} \otimes M_{\text{prod}, k}^{(L)})M_{k+1}, \quad 1 \leq k \leq d-2,
\end{equation*}
\begin{equation*}
	B_{\text{prod}, d}^{(R)} = B_d, \quad B_{\text{prod}, k}^{(R)} = (B_{\text{prod}, k+1}^{(R)} \otimes I_{p_k})B_k, \quad 2 \leq k \leq d-1.
\end{equation*}

\subsection{Representation Lemmas for high-order tensors}\label{sec:algebraic-representation-lemma}

Since the computation of high-order tensors with tensor-train structures involves extensive tensor algebra, we introduce the following three lemmas on the matrix representation of high-order tensors. These lemmas play a fundamental role in the later theoretical analysis.
\begin{lemma}[Representation for sequential matricization of TT-decomposable tensor]\label{lm:tt_representation}
	Suppose $\bfc{X} = \llbracket G_1,\bfc{G}_2, \ldots, \bfc{G}_{d-1}, G_d \rrbracket$. Then the sequential matricization of $\bfc{X}$ can be written as
	\begin{equation}
	\begin{split}
	[\bfc{X}]_k = &  (I_{p_2\cdots p_k} \otimes G_1)(I_{p_3\cdots p_k} \otimes \left[\bfc{G}_2\right]_2)\cdots (I_{p_k} \otimes \left[\bfc{G}_{k-1}\right]_2)\left[\bfc{G}_k\right]_2\left[\bfc{G}_{k+1}\right]_1\\
	& \cdot \left(\left[\bfc{G}_{k+2}\right]_1 \otimes I_{p_{k+1}}\right)\cdots\left([\bfc{G}_{d-1}]_1 \otimes I_{p_{k+1}\cdots p_{d-2}}\right)\left(G_d^\top \otimes I_{p_{k+1}\cdots p_{d-1}}\right).
	\end{split}
	\end{equation}
\end{lemma}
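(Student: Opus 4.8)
The plan is to cut the right-hand side at its central factor $[\bfc{G}_k]_2$: write the product up to and including $[\bfc{G}_k]_2$ as a left block $L_k\in\bbR^{(p_1\cdots p_k)\times r_k}$, and the remaining product $[\bfc{G}_{k+1}]_1([\bfc{G}_{k+2}]_1\otimes I_{p_{k+1}})\cdots(G_d^\top\otimes I_{p_{k+1}\cdots p_{d-1}})$ as a right block $R_k\in\bbR^{r_k\times(p_{k+1}\cdots p_d)}$, and then to show $L_kR_k=[\bfc{X}]_k$ by comparing entries. Throughout I use the harmless conventions $r_0=r_d=1$, $\bfc{G}_1:=G_1$ regarded as an element of $\bbR^{1\times p_1\times r_1}$ and $\bfc{G}_d:=G_d^\top$ regarded as an element of $\bbR^{r_{d-1}\times p_d\times 1}$, so that $[\bfc{G}_1]_2=G_1$, $[\bfc{G}_d]_1=G_d^\top$, and the degenerate cases $k=1$ (where $L_1=G_1$) and $k=d-1$ (where $R_{d-1}=G_d^\top$) are handled uniformly.

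First I would record the two recursions
$$L_k=(I_{p_k}\otimes L_{k-1})[\bfc{G}_k]_2,\qquad R_k=[\bfc{G}_{k+1}]_1(R_{k+1}\otimes I_{p_{k+1}}),$$
both of which follow from associativity of the Kronecker product, the identity $I_{p_{j}\cdots p_k}\otimes M=I_{p_k}\otimes(I_{p_j\cdots p_{k-1}}\otimes M)$, and the mixed-product rules $(I_m\otimes A)(I_m\otimes B)=I_m\otimes(AB)$ and $(AB)\otimes I_m=(A\otimes I_m)(B\otimes I_m)$. Using the first recursion I would prove by forward induction on $k$ that, with the row index read according to the sequential-unfolding convention of Section~\ref{subsec:notation},
$$(L_k)_{(i_1,\dots,i_k),\,\alpha_k}=\sum_{\alpha_1,\dots,\alpha_{k-1}}G_{1,[i_1,\alpha_1]}\bfc{G}_{2,[\alpha_1,i_2,\alpha_2]}\cdots\bfc{G}_{k,[\alpha_{k-1},i_k,\alpha_k]};$$
the inductive step only needs the block-diagonal structure of $I_{p_k}\otimes L_{k-1}$ (which pins the outer copy index to $i_k$) and the identity $([\bfc{G}_k]_2)_{(i_k-1)r_{k-1}+\alpha_{k-1},\,\alpha_k}=\bfc{G}_{k,[\alpha_{k-1},i_k,\alpha_k]}$. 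A mirror-image backward induction on $k$ (from $d-1$ down) using the second recursion gives
$$(R_k)_{\alpha_k,\,(i_{k+1},\dots,i_d)}=\sum_{\alpha_{k+1},\dots,\alpha_{d-1}}\bfc{G}_{k+1,[\alpha_k,i_{k+1},\alpha_{k+1}]}\cdots\bfc{G}_{d-1,[\alpha_{d-2},i_{d-1},\alpha_{d-1}]}G_{d,[i_d,\alpha_{d-1}]}.$$

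Finally I would multiply the two displays and sum over the shared index $\alpha_k$; the sum telescopes into $\sum_{\alpha_1,\dots,\alpha_{d-1}}G_{1,[i_1,\alpha_1]}\bfc{G}_{2,[\alpha_1,i_2,\alpha_2]}\cdots G_{d,[i_d,\alpha_{d-1}]}$, which is $\bfc{X}_{i_1,\dots,i_d}$ by \eqref{TT}. Since the row and column of $L_kR_k$ picked out here are exactly the sequential-unfolding indices of $(i_1,\dots,i_k)$ and $(i_{k+1},\dots,i_d)$, this proves $[\bfc{X}]_k=L_kR_k$ entry by entry, which is the claim. I expect the only real difficulty to be bookkeeping: ensuring that the ``which index varies fastest'' conventions of the sequential unfolding $[\,\cdot\,]_k$, of the Kronecker product $U\otimes V$ as defined in Section~\ref{subsec:notation}, and of the reshapes relating $[\bfc{G}_k]_1$ and $[\bfc{G}_k]_2$ are all mutually compatible — for instance, that in $I_{p_k}\otimes L_{k-1}$ the identity's index is the slowest-varying row index, matching the convention that Mode-$1$ varies fastest in $[\bfc{X}]_k$. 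Phrasing the whole argument through the two recursions above, rather than expanding the length-$k$ products directly, confines this bookkeeping to a single core at a time and is what makes the inductions routine.
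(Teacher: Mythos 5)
Your proof is correct and follows the same route the paper indicates: the paper simply remarks that Lemma \ref{lm:tt_representation} ``can be proved by checking each entry of the corresponding matricizations,'' and your argument is a careful, well-organized version of exactly that entrywise verification. The two recursions for $L_k$ and $R_k$, the index bookkeeping for $I_{p_k}\otimes L_{k-1}$ and $[\bfc{G}_k]_2$, and the final telescoping sum over $\alpha_1,\dots,\alpha_{d-1}$ are all consistent with the paper's unfolding and Kronecker conventions.
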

\begin{lemma}[Representation of tensor reshaping]\label{lm:realignment}
	For any tensor $\bfc{T} \in \RR^{\otimes_{k=1}^d p_k}$ and $1 \leq i < j \leq d-1$, we have
	\begin{equation*}
	[\bfc{T}]_j = (I_{p_{i+1}\cdots p_j} \otimes [\bfc{T}]_i)A^{(p_{i+1}\cdots p_j, p_{j+1}\cdots p_d)},\quad 
	[\bfc{T}]_i = A^{(p_{i+1}\cdots p_j,p_1\cdots p_i)\top}([\bfc{T}]_j \otimes I_{p_{i+1}\cdots p_{j}}).
	\end{equation*}
	Here, we define $e_{k}^{(ij)}$ as the $k$th canonical basis of $\bbR^{ij}$ and
	\begin{equation}\label{eq:realignment}
	A^{(i, j)} = \begin{bmatrix}
	e_{1}^{(ij)} & e_{i+1}^{(ij)} & \cdots & e_{i\left(j - 1\right) + 1}^{(ij)}\\
	e_{2}^{(ij)} & e_{i+2}^{(ij)} & \cdots & e_{i(j-1) + 2}^{(ij)}\\
	\vdots & \vdots & \ddots & \vdots\\
	e_{i}^{(ij)} & e_{2i}^{(ij)} & \cdots & e_{ij}^{(ij)}
	\end{bmatrix} \in \bbR^{(i^2j) \times j}.
	\end{equation}
\end{lemma}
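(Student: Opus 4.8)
The plan is to prove both identities by direct index bookkeeping, reduced to a single basis tensor. Since the matrix $A^{(\cdot,\cdot)}$ from \eqref{eq:realignment} has fixed $0$-$1$ entries that do not depend on $\bfc{T}$, and both sides of each claimed equality are linear in $\bfc{T}$, it suffices to verify the identities when $\bfc{T}=\bfc{E}_{i_1,\dots,i_d}$ is a standard basis tensor, whose only nonzero entry, equal to $1$, sits at position $(i_1,\dots,i_d)$. For such $\bfc{T}$ every sequential unfolding $[\bfc{T}]_k$ is a matrix with exactly one nonzero entry, at row $\phi_k$ and column $\psi_k$, where, using the encodings from Section \ref{subsec:notation}, $\phi_k=i_1+(i_2-1)p_1+\cdots+(i_k-1)p_1\cdots p_{k-1}$ and $\psi_k=i_{k+1}+(i_{k+2}-1)p_{k+1}+\cdots+(i_d-1)p_{k+1}\cdots p_{d-1}$. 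The proof then comes down to tracking where this lone nonzero entry is routed by the operations on each right-hand side.

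The two combinatorial facts that power the computation are the mixed-radix decompositions, valid for $1\le i<j\le d-1$: writing $a=p_{i+1}\cdots p_j$, $b=p_{j+1}\cdots p_d$ and setting $\chi:=i_{i+1}+(i_{i+2}-1)p_{i+1}+\cdots+(i_j-1)p_{i+1}\cdots p_{j-1}\in\{1,\dots,a\}$,
\[
\phi_j \;=\; \phi_i+(p_1\cdots p_i)(\chi-1), \qquad \psi_i \;=\; \chi+a\,(\psi_j-1).
\]
I also need the explicit sparsity pattern of $A^{(m,n)}$: its block at array position $(\mu,\nu)$ is the canonical vector $e^{(mn)}_{m(\nu-1)+\mu}$, so $A^{(m,n)}_{r,\nu}=1$ precisely when $r=(\mu-1)(mn)+m(\nu-1)+\mu$ for some $\mu\in\{1,\dots,m\}$. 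For the first identity I would: (i) observe that inside $I_a\otimes[\bfc{T}]_i$ (with the Kronecker convention of Section \ref{subsec:notation}, blocks $U_{\cdot\cdot}V$) the $a$ copies of the nonzero entry sit at global rows $(\alpha-1)(p_1\cdots p_i)+\phi_i$ and global columns $(\alpha-1)(ab)+\psi_i$, $\alpha=1,\dots,a$; (ii) multiply on the right by $A^{(a,b)}$ and, matching residues modulo $a$ and then modulo $b$ via $\psi_i=\chi+a(\psi_j-1)$, show that only the copy with $\alpha=\chi$ survives, landing in column $\psi_j$; (iii) invoke $\phi_j=\phi_i+(p_1\cdots p_i)(\chi-1)$ to conclude the product has its unique nonzero entry $1$ at $(\phi_j,\psi_j)$, which is exactly $[\bfc{T}]_j$. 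For the second identity the same bookkeeping runs in reverse: the nonzero entry of $[\bfc{T}]_j$ spreads, inside $[\bfc{T}]_j\otimes I_a$, along a diagonal identity block, and left-multiplication by $A^{(a,\,p_1\cdots p_i)\top}$ selects exactly the coordinate that the two decompositions pin down at row $\phi_i$, column $\psi_i$, giving $[\bfc{T}]_i$.

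An alternative, slightly more structural route is induction on $j-i$: prove the base case $j=i+1$ directly (a single ``reshape step'' with $a=p_{i+1}$) and compose two consecutive steps for the inductive step, which would require a factorization identity for the matrices $A^{(\cdot,\cdot)}$ together with $I_{a_1a_2}\otimes M=I_{a_1}\otimes(I_{a_2}\otimes M)$. I expect this to demand essentially the same index arithmetic, so I would present the direct computation above.

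The main obstacle is entirely notational: three indexing conventions coexist in the statement --- the sequential-unfolding/$\mathrm{reshape}$ encoding of $\bfc{T}$, the block ordering of the Kronecker product as defined in Section \ref{subsec:notation}, and the sparsity pattern of $A^{(\cdot,\cdot)}$ --- and one must keep the place values straight across all three while never transposing the roles of the ``head'' indices $i_1,\dots,i_i$, the ``middle'' indices $i_{i+1},\dots,i_j$, and the ``tail'' indices $i_{j+1},\dots,i_d$. Once the two displayed decompositions of $\phi_j$ and $\psi_i$ are in hand, both identities fall out by inspection.
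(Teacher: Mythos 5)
Your proposal is correct and matches the paper's approach: the paper dispatches this lemma with the remark that it ``can be proved by checking each entry of the corresponding matricizations,'' which is precisely the entry-tracking argument you carry out (and your two mixed-radix decompositions $\phi_j=\phi_i+(p_1\cdots p_i)(\chi-1)$ and $\psi_i=\chi+a(\psi_j-1)$, together with the stated sparsity pattern of $A^{(m,n)}$, do verify both identities). You have simply supplied the details the paper omits.
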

Lemmas \ref{lm:tt_representation} and \ref{lm:realignment} can be proved by checking each entry of the corresponding matricizations. In addition, the following lemma provides a representation of sequential reshaping tensor, in particular for $R_k^{(t)}$ and $\widetilde{R}_k^{(t)}$, the key intermediate outcomes in TTOI procedure.
\begin{lemma}[Representation of sequential reshaping tensor]\label{lm:R_0}
	Suppose $\bfc{T}\in \mathbb{R}^{\otimes_{k=1}^d p_k}, M_i \in \bbR^{(r_{i-1}p_i) \times r_i}$ for $1 \leq i \leq d-1$, $B_i \in \bbR^{(p_ir_i) \times r_{i-1}}$ for $2 \leq i \leq d$, where $r_0 =  r_d = 1$. Consider the following sequential multiplication: 
	
	\noindent{\bf Forward sequential multiplication:} Let $S_1=[\bfc{T}]_1$. For $k=1,\ldots, d-1$, calculate
	\begin{equation*}
	\begin{split}
	& \widetilde{S}_k = M_k^\top S_k \in \bbR^{r_k \times (p_{k+1}\cdots p_d)},\\
	& S_{k+1} = \mathrm{Reshape} (\widetilde{S}_k, r_kp_{k+1}, p_{k+2}\cdots p_d) \quad \text{if } k < d-1.
	\end{split}
	\end{equation*}
	Then for any $1 \leq k \leq d-1$, 
	\begin{equation}\label{eq:realignment_forward}
	\begin{split}
	&S_k = (I_{p_k} \otimes M_{\text{prod}, k-1}^{(L)\top})[\bfc{T}]_k, \quad
	\widetilde{S}_k = M_{\text{prod}, k}^{(L)\top}[\bfc{T}]_k.
	\end{split}
	\end{equation}
	Here, $I_{p_k} \otimes M_{\text{prod}, k-1}^{(L)\top} = I_{p_1}$ if $k = 1$.
	
	\noindent{\bf Backward sequential multiplication:} Let $W_{d-1} = [\bfc{T}]_{d-1}$. For $k=d-1,\ldots,1$, calculate
	\begin{equation*}
	\begin{split}
	&\widetilde{W}_k = W_kB_{k+1} \in \bbR^{(p_1\cdots p_k) \times r_k}, \\
	&W_{k-1} = \mathrm{Reshape} (\widetilde{W}_k, p_1\cdots p_{k-1}, p_{k}r_{k})  \quad \text{if } k > 1. 
	\end{split}
	\end{equation*}
	Then for any $1 \leq k \leq d-1$, 
	\begin{equation*}
	\begin{split}
	&W_k = [\bfc{T}]_k(B_{\text{prod}, k+2}^{(R)} \otimes I_{p_{k+1}}), \quad
	\widetilde{W}_k = [\bfc{T}]_kB_{\text{prod}, k+1}^{(R)}.
	\end{split}
	\end{equation*}
	Here, $B_{\text{prod}, k+2}^{(R)} \otimes I_{p_{k+1}} = I_{p_d}$ if $k = d-1$.
	
	In particular, $R_k^{(0)}, \widetilde{R}_k^{(0)}$ in Algorithm \ref{algorithm:TTSVD} and $R_k^{(t)}, \widetilde{R}_k^{(t)} (t \in \{2,4,6,\ldots\})$ in Algorithm \ref{algorithm:forward} satisfy
	\begin{equation}\label{eq1}
	\begin{split}
	&R_k^{(t)} = \left(I_{p_k} \otimes (\widehat{U}^{(t)})_{\text{prod}, k-1}^{(L)\top}\right)[\bfc{Y}]_k, \quad \widetilde{R}_k^{(t)} = (\widehat{U}^{(t)})_{\text{prod}, k}^{(L)\top}[\bfc{Y}]_k, \quad \forall 1 \leq k \leq d-1.
	\end{split}
	\end{equation}
\end{lemma}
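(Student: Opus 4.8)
The plan is to establish the two ``sequential multiplication'' identities by induction---the forward one by induction on $k$ running upward from $k=1$, the backward one by induction on $k$ running downward from $k=d-1$---and then to obtain \eqref{eq1} by observing that the recursions defining $R_k^{(t)},\widetilde R_k^{(t)}$ in Algorithms \ref{algorithm:TTSVD} and \ref{algorithm:forward} are precisely the forward recursion with $\bfc{T}=\bfc{Y}$ and $M_k=\widehat U_k^{(t)}$. The only inputs needed are Lemma \ref{lm:realignment} and the Kronecker mixed-product rule $(A\otimes B)(C\otimes D)=(AC)\otimes(BD)$; the explicit entries of $A^{(i,j)}$ in \eqref{eq:realignment} are never used, only the two identities of Lemma \ref{lm:realignment}.

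For the forward case, the base step $k=1$ is immediate: $S_1=[\bfc{T}]_1$ agrees with $(I_{p_1}\otimes M_{\text{prod},0}^{(L)\top})[\bfc{T}]_1$ under the stated convention, and $\widetilde S_1=M_1^\top S_1=M_{\text{prod},1}^{(L)\top}[\bfc{T}]_1$ since $M_{\text{prod},1}^{(L)}=M_1$. For the inductive step, I would view $\widetilde S_k\in\bbR^{r_k\times(p_{k+1}\cdots p_d)}$ as the first sequential unfolding of an auxiliary order-$(d-k+1)$ tensor, so that $S_{k+1}=\mathrm{Reshape}(\widetilde S_k, r_kp_{k+1}, p_{k+2}\cdots p_d)$ is its second sequential unfolding; Lemma \ref{lm:realignment} (first identity, $i=1$, $j=2$) then gives $S_{k+1}=(I_{p_{k+1}}\otimes\widetilde S_k)A^{(p_{k+1},p_{k+2}\cdots p_d)}$. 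Substituting the induction hypothesis $\widetilde S_k=M_{\text{prod},k}^{(L)\top}[\bfc{T}]_k$, factoring through the Kronecker product, and then applying Lemma \ref{lm:realignment} to $\bfc{T}$ itself with $i=k$, $j=k+1$ to recognize $(I_{p_{k+1}}\otimes[\bfc{T}]_k)A^{(p_{k+1},p_{k+2}\cdots p_d)}=[\bfc{T}]_{k+1}$ yields $S_{k+1}=(I_{p_{k+1}}\otimes M_{\text{prod},k}^{(L)\top})[\bfc{T}]_{k+1}$. Left-multiplying by $M_{k+1}^\top$ and invoking $M_{\text{prod},k+1}^{(L)\top}=M_{k+1}^\top(I_{p_{k+1}}\otimes M_{\text{prod},k}^{(L)\top})$ closes the induction for $\widetilde S_{k+1}$.

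The backward case is the mirror image. The base step $k=d-1$ uses $W_{d-1}=[\bfc{T}]_{d-1}$ and $B_{\text{prod},d}^{(R)}=B_d$ under the convention $B_{\text{prod},d+1}^{(R)}\otimes I_{p_d}=I_{p_d}$. In the inductive step I regard $\widetilde W_k\in\bbR^{(p_1\cdots p_k)\times r_k}$ as the $k$th sequential unfolding of an auxiliary order-$(k+1)$ tensor and $W_{k-1}$ as its $(k-1)$st unfolding, so the second identity of Lemma \ref{lm:realignment} with $i=k-1$, $j=k$ gives $W_{k-1}=A^{(p_k,p_1\cdots p_{k-1})\top}(\widetilde W_k\otimes I_{p_k})$; substituting the hypothesis $\widetilde W_k=[\bfc{T}]_kB_{\text{prod},k+1}^{(R)}$, factoring the Kronecker product, and applying Lemma \ref{lm:realignment} to $\bfc{T}$ with $i=k-1$, $j=k$ to identify $A^{(p_k,p_1\cdots p_{k-1})\top}([\bfc{T}]_k\otimes I_{p_k})=[\bfc{T}]_{k-1}$ gives $W_{k-1}=[\bfc{T}]_{k-1}(B_{\text{prod},k+1}^{(R)}\otimes I_{p_k})$; right-multiplying by $B_k$ and using $B_{\text{prod},k}^{(R)}=(B_{\text{prod},k+1}^{(R)}\otimes I_{p_k})B_k$ closes the induction for $\widetilde W_{k-1}$.

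Finally, \eqref{eq1} is read off: in Algorithm \ref{algorithm:TTSVD} (resp.\ the forward update of Algorithm \ref{algorithm:forward}) one has $R_1^{(t)}=[\bfc{Y}]_1$, $\widetilde R_k^{(t)}=\widehat U_k^{(t)\top}R_k^{(t)}$, and $R_{k+1}^{(t)}=\mathrm{reshape}(\widetilde R_k^{(t)}, r_kp_{k+1}, p_{k+2}\cdots p_d)$, which is exactly the forward recursion with $\bfc{T}=\bfc{Y}$, $M_k=\widehat U_k^{(t)}$; the forward identity then applies verbatim. I do not anticipate a real obstacle: the proof is a bookkeeping induction, and the only points requiring care are identifying which auxiliary tensor's unfoldings each $\mathrm{Reshape}$ step produces (so that Lemma \ref{lm:realignment} is applied with the correct indices) and handling the two boundary conventions so that the empty Kronecker factors at $k=1$ (forward) and $k=d-1$ (backward) are interpreted as identities.
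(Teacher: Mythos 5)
Your proposal is correct and follows essentially the same route as the paper's proof: induction on $k$, using Lemma \ref{lm:realignment} applied to the auxiliary tensor whose unfolding is $\widetilde S_k$ together with the Kronecker mixed-product rule, and then reading off \eqref{eq1} by matching the algorithmic recursion to the forward case with $\bfc{T}=\bfc{Y}$, $M_k=\widehat U_k^{(t)}$. The only cosmetic difference is that the paper proves the forward identity in detail and dispatches the backward one "by symmetry," whereas you spell out the mirror-image induction explicitly.
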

The proof of Lemma \ref{lm:R_0} is provided in Section \ref{sec:proof-R_0}.

\subsection{Deterministic Upper Bounds for Estimation Error of TTOI}

Now we are in position to analyze the performance of TTOI. The following Theorem \ref{thm:upper_deterministic} introduces an upper bound on estimation error of $\widehat{\bfc{X}}^{(2t+1)}$ (backward update) and $\widehat{\bfc{X}}^{(2t+2)}$ (forward update). 
\begin{thm}\label{thm:upper_deterministic}
 	Suppose we observe $\bfc{Y} = \bfc{X} + \bfc{Z}$, where $\bfc{X}$ admits a TT decomposition as \eqref{TT}. 

 	\noindent{\bf (A deterministic estimation error bound for backward updates)} Let $\widetilde{U}_1^{(2t)} = U_1 \in \bbR^{p_1 \times r_1}$ be the left singular space of $[\bfc{X}]_1$. For $2 \leq k \leq d-1$, define $\widetilde{U}_k^{(2t)} \in \bbR^{p_kr_{k-1} \times r_k}$ as the left singular subspace of $\left(I_{p_k} \otimes (\widehat{U}^{(2t)})_{\text{prod}, k-1}^{(L)\top}\right)[\bfc{X}]_k$. If for some constant $c_0 \in (0, 1)$,
 	\begin{equation}\label{ineq:initial_condition}
 	\begin{split}
 	& \left\|\sin\Theta\left(\widehat{U}_k^{(2t)}, \widetilde{U}_k^{(2t)}\right)\right\| \leq c_0, \quad \forall 1 \leq k \leq d-1,
 	\end{split}
 	\end{equation}
 	then there exists a constant $C_d > 0$ that only depends on $d$ such that the outcome of Algorithm \ref{algorithm:backward} satisfies
 	\begin{equation}\label{ineq:upper_deterministic}
 		\begin{split}
 		\left\|\widehat{\bfc{X}}^{(2t+1)} - \bfc{X}\right\|_{\F}^2 \leq C_d\left(\sum_{k=1}^{d-1}A_k^{(2t+1)} + B^{(2t+1)}\right),	\end{split}
 	\end{equation}
 	where
 	$$A_k^{(2t+1)} = \left\|(\widehat{U}^{(2t)})_{\text{prod}, k}^{(L)\top}[\bfc{Z}]_{k}\left((\widehat{V}^{(2t+1)})_{\text{prod}, k+2}^{(R)} \otimes I_{p_{k+1}}\right)\right\|_{\F}^2,$$
 	$$B^{(2t+1)}=\left\|[\bfc{Z}]_1(\widehat{V}^{(2t+1)})_{\text{prod}, 2}^{(R)}\right\|_{\F}^2.$$
 	Here, $(\widehat{V}^{(2t+1)})_{\text{prod}, k+2}^{(R)} \otimes I_{p_{k+1}} = I_{p_d}$ if $k = d-1$.
	
 	\noindent{\bf (A deterministic estimation error bound for forward updates)} For $2 \leq k \leq d - 1$, let $\widetilde{V}_{k}^{(2t+1)} \in \bbR^{(p_kr_k) \times r_{k-1}}$ be the right singular space of $[\bfc{X}]_{k-1}\left((\widehat{V}^{(2t+1)})_{\text{prod}, k+1}^{(R)} \otimes I_{p_k}\right)$ and let $\widetilde{V}_{d}^{(2t+1)} = V_d \in \bbR^{p_d \times r_{d-1}}$ be the right singular space of $[\bfc{X}]_{d-1}$. If for some constant $c_0 \in (0, 1)$, 
    \begin{equation*}
    \begin{split}
    & \left\|\sin\Theta\left(\widehat{V}_k^{(2t+1)}, \widetilde{V}_k^{(2t+1)}\right)\right\| \leq c_0, \quad \forall 2 \leq k \leq d,
    \end{split}
    \end{equation*}
    then there exists a constant $C_d > 0$ that only depends on $d$ such that the outcome of Algorithm \ref{algorithm:forward} satisfies
    \begin{equation}\label{ineq:upper_deterministic_X^2t+2}
    	\left\|\widehat{\bfc{X}}^{(2t+2)} - \bfc{X}\right\|_{\F}^2 \leq C_d\left(\sum_{k=1}^{d-1}A_k^{(2t+2)} + B^{(2t+2)}\right),
    \end{equation}
    where
    $$A_k^{(2t+2)} = \left\|\left(I_{p_{k}} \otimes (\widehat{U}^{(2t+2)})_{\text{prod}, k-1}^{(L)\top}\right)[\bfc{Z}]_{k}(\widehat{V}^{(2t+1)})_{\text{prod}, k+1}^{(R)}\right\|_{\F}^2,$$
    $$B^{(2t+2)}=\left\|(\widehat{U}^{(2t+2)})_{\text{prod}, d-1}^{(L)\top}[\bfc{Z}]_{d-1}\right\|_{\F}^2.$$
    Here, $I_{p_{k}} \otimes (\widehat{U}^{(2t+2)})_{\text{prod}, k-1}^{(L)\top} = I_{p_{1}}$ if $k = 1$.
\end{thm}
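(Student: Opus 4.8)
The plan is to prove the backward-update bound \eqref{ineq:upper_deterministic} directly, and then deduce the forward-update bound \eqref{ineq:upper_deterministic_X^2t+2} by reversing the order of the $d$ modes of $\bfc{Y}$, which turns Algorithm~\ref{algorithm:forward} into Algorithm~\ref{algorithm:backward} with the roles of the $\widehat{U}$'s and the $\widehat{V}$'s exchanged. For the backward bound, the first move is an exact splitting of the error. By Algorithm~\ref{algorithm:backward}, $[\widehat{X}^{(2t+1)}]_1 = [\bfc{Y}]_1 \widehat{V}_{\mathrm{prod}}\widehat{V}_{\mathrm{prod}}^\top$ with $\widehat{V}_{\mathrm{prod}} := (\widehat{V}^{(2t+1)})_{\mathrm{prod}, 2}^{(R)}$, and since Kronecker products and matrix products of column-orthonormal matrices are again column-orthonormal, $\widehat{V}_{\mathrm{prod}}\widehat{V}_{\mathrm{prod}}^\top$ is an orthogonal projection. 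Writing $[\bfc{Y}]_1 = [\bfc{X}]_1 + [\bfc{Z}]_1$ and noting that $[\bfc{Z}]_1\widehat{V}_{\mathrm{prod}}\widehat{V}_{\mathrm{prod}}^\top$ and $[\bfc{X}]_1(I - \widehat{V}_{\mathrm{prod}}\widehat{V}_{\mathrm{prod}}^\top)$ are orthogonal in the Frobenius inner product, we obtain
\[
\bigl\|\widehat{\bfc{X}}^{(2t+1)} - \bfc{X}\bigr\|_{\F}^2 = \bigl\|[\bfc{Z}]_1\widehat{V}_{\mathrm{prod}}\bigr\|_{\F}^2 + \mathcal{E}_1^2 = B^{(2t+1)} + \mathcal{E}_1^2, \quad \text{where } \mathcal{E}_k := \Bigl\|[\bfc{X}]_k\bigl(I - (\widehat{V}^{(2t+1)})_{\mathrm{prod}, k+1}^{(R)}(\widehat{V}^{(2t+1)})_{\mathrm{prod}, k+1}^{(R)\top}\bigr)\Bigr\|_{\F},
\]
so it remains to bound $\mathcal{E}_1^2$ by a $d$-dependent multiple of $\sum_{k=1}^{d-1} A_k^{(2t+1)}$.

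Next I set up a mode-by-mode peeling recursion for $\mathcal{E}_1$. Using the recursion $(\widehat{V}^{(2t+1)})_{\mathrm{prod},k}^{(R)} = ((\widehat{V}^{(2t+1)})_{\mathrm{prod},k+1}^{(R)} \otimes I_{p_k})\widehat{V}_k^{(2t+1)}$, a short computation (modulo the index reorderings handled by the realignment matrices of Lemma~\ref{lm:realignment}) writes the complementary projection $I - (\widehat{V}^{(2t+1)})_{\mathrm{prod},k}^{(R)}(\widehat{V}^{(2t+1)})_{\mathrm{prod},k}^{(R)\top}$ as the sum of $\bigl(I - (\widehat{V}^{(2t+1)})_{\mathrm{prod},k+1}^{(R)}(\widehat{V}^{(2t+1)})_{\mathrm{prod},k+1}^{(R)\top}\bigr)\otimes I_{p_k}$ and a conjugate of $I - \widehat{V}_k^{(2t+1)}\widehat{V}_k^{(2t+1)\top}$ by the partial isometry $(\widehat{V}^{(2t+1)})_{\mathrm{prod},k+1}^{(R)} \otimes I_{p_k}$. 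Multiplying $[\bfc{X}]_{k-1}$ on the right, using Lemmas~\ref{lm:realignment} and~\ref{lm:R_0} to re-identify the reshapings, and using that right-multiplication by a matrix with orthonormal rows does not increase the Frobenius norm, the first summand contributes exactly $\mathcal{E}_k$ and the second is at most $\|W_{k-1}(I - \widehat{V}_k^{(2t+1)}\widehat{V}_k^{(2t+1)\top})\|_{\F}$, where $W_{k-1} := [\bfc{X}]_{k-1}((\widehat{V}^{(2t+1)})_{\mathrm{prod},k+1}^{(R)} \otimes I_{p_k})$ is precisely the quantity ``$W_{k-1}$'' produced by the backward sequential multiplication of Lemma~\ref{lm:R_0} applied with $B_j = \widehat{V}_j^{(2t+1)}$; in particular $\mathrm{rank}(W_{k-1}) \le r_{k-1}$ and $\mathrm{col}(W_{k-1}) \subseteq \mathrm{col}([\bfc{X}]_{k-1})$. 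This yields $\mathcal{E}_{k-1} \le \mathcal{E}_k + \|W_{k-1}(I - \widehat{V}_k^{(2t+1)}\widehat{V}_k^{(2t+1)\top})\|_{\F}$ for $2 \le k \le d-1$, with base case $\mathcal{E}_{d-1} = \|W_{d-1}(I - \widehat{V}_d^{(2t+1)}\widehat{V}_d^{(2t+1)\top})\|_{\F}$ and $W_{d-1} = [\bfc{X}]_{d-1}$.

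To control each one-step term, observe that by Lemma~\ref{lm:R_0} the backward update computes $\widehat{V}_k^{(2t+1)} = \SVD^{R}_{r_{k-1}}(S_{k-1} + N_{k-1})$, where $S_{k-1} := (\widehat{U}^{(2t)})_{\mathrm{prod},k-1}^{(L)\top}W_{k-1}$ has rank at most $r_{k-1}$ and $N_{k-1} := (\widehat{U}^{(2t)})_{\mathrm{prod},k-1}^{(L)\top}[\bfc{Z}]_{k-1}((\widehat{V}^{(2t+1)})_{\mathrm{prod},k+1}^{(R)} \otimes I_{p_k})$ satisfies $\|N_{k-1}\|_{\F}^2 = A_{k-1}^{(2t+1)}$ (matching the stated definition, with the stated convention when $k-1 = d-1$). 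Since $S_{k-1}$ has rank at most $r_{k-1}$, Weyl's inequality gives $\sum_{i > r_{k-1}} s_i^2(S_{k-1} + N_{k-1}) \le \|N_{k-1}\|_{\F}^2$, hence $\|(S_{k-1} + N_{k-1})(I - \widehat{V}_k^{(2t+1)}\widehat{V}_k^{(2t+1)\top})\|_{\F} \le \sqrt{A_{k-1}^{(2t+1)}}$, and by the triangle inequality $\|S_{k-1}(I - \widehat{V}_k^{(2t+1)}\widehat{V}_k^{(2t+1)\top})\|_{\F} \le 2\sqrt{A_{k-1}^{(2t+1)}}$. The remaining task is to transfer this bound from $S_{k-1}$ to $W_{k-1}$, and this is exactly where condition \eqref{ineq:initial_condition} enters: using $(\widehat{U}^{(2t)})_{\mathrm{prod},k-1}^{(L)} = (I_{p_{k-1}} \otimes (\widehat{U}^{(2t)})_{\mathrm{prod},k-2}^{(L)})\widehat{U}_{k-1}^{(2t)}$ together with the reshaping identity $[\bfc{X}]_{k-1} = (I_{p_{k-1}} \otimes [\bfc{X}]_{k-2})A^{(p_{k-1}, p_k\cdots p_d)}$ of Lemma~\ref{lm:realignment}, one shows by induction on the mode index that $(\widehat{U}^{(2t)})_{\mathrm{prod},k-1}^{(L)\top}$ is injective on $\mathrm{col}([\bfc{X}]_{k-1})$ with a left inverse of operator norm at most $(1 - c_0^2)^{-(k-1)/2}$ --- each stage costing only a factor $(1 - c_0^2)^{-1/2}$ because $s_{\min}(\widehat{U}_j^{(2t)\top}\widetilde{U}_j^{(2t)}) \ge \sqrt{1 - c_0^2}$ by \eqref{ineq:initial_condition} and $\widetilde{U}_j^{(2t)}$ is by definition the left singular subspace of the $(I_{p_j} \otimes (\widehat{U}^{(2t)})_{\mathrm{prod},j-1}^{(L)\top})$-reduced signal. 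Consequently $W_{k-1} = L_{k-1}S_{k-1}$ for some $L_{k-1}$ with $\|L_{k-1}\| \le (1 - c_0^2)^{-(k-1)/2}$, so $\|W_{k-1}(I - \widehat{V}_k^{(2t+1)}\widehat{V}_k^{(2t+1)\top})\|_{\F} \le 2(1 - c_0^2)^{-(k-1)/2}\sqrt{A_{k-1}^{(2t+1)}}$. Unrolling the recursion of the previous paragraph and applying the Cauchy--Schwarz inequality then gives $\mathcal{E}_1^2 \le 4(d-1)(1 - c_0^2)^{-(d-1)}\sum_{k=1}^{d-1} A_k^{(2t+1)}$, which with the first paragraph proves \eqref{ineq:upper_deterministic}; the forward bound \eqref{ineq:upper_deterministic_X^2t+2} follows by applying this result to the mode-reversed tensor.

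The main obstacle is the interface between the peeling recursion and the ``no-signal-loss'' step: one must verify, keeping the reshaping conventions of Lemmas~\ref{lm:realignment} and~\ref{lm:R_0} consistent throughout, that condition \eqref{ineq:initial_condition} (closeness to the true reduced left singular subspaces at \emph{every} mode) truly forces the factorization $W_{k-1} = L_{k-1}S_{k-1}$ with $\|L_{k-1}\|$ controlled purely by $c_0$, so that no factor depending on the dimensions $p_k$ or on the true singular values of $\bfc{X}$ leaks into the bound and the final constant $C_d$ depends on $d$ alone. Carrying the Kronecker-product reorderings and the interplay of spectral and Frobenius norms through all of these manipulations is where the bulk of the bookkeeping lies.
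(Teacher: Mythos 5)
Your proposal is correct and follows essentially the same route as the paper's proof: the same splitting of $\widehat{\bfc{X}}^{(2t+1)}-\bfc{X}$ into a projected-noise part and a signal-leakage part, the same use of Lemma~\ref{lm:realignment} to convert $\|[\bfc{X}]_1(\cdots)(\widehat{V}_{k\perp}\otimes I)\|_\F$ into $\|[\bfc{X}]_{k-1}(\cdots)\widehat{V}_{k\perp}\|_\F$, the same ``insert $\widehat{U}_j$ at the cost of $s_{\min}^{-1}(\widehat{U}_j^\top\widetilde{U}_j)\le(1-c_0^2)^{-1/2}$'' chain, and the same perturbation bound (Lemma~\ref{lm:perturbation}, which you rederive via Eckart--Young/Weyl). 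The only cosmetic difference is that you unroll the paper's one-shot orthogonal decomposition of $I-P_{(\widehat{V}_d\otimes I)\cdots\widehat{V}_2}$ into a mode-by-mode peeling recursion for $\mathcal{E}_k$, which is the same decomposition organized telescopically.
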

The proof of Theorem \ref{thm:upper_deterministic} is provided in Section \ref{sec:proof_upper_determinisitc}. Theorem \ref{thm:upper_deterministic} shows the estimation error $\|\widehat{\bfc{X}}^{(t+1)} - \bfc{X}\|_{\F}^2$ can be bounded by the projected noise $\bfc{Z}$, i.e., $A^{(t+1)}_k$ and $B^{(t+1)}$, if the estimates in initialization ($t=0$) or the previous iteration $(t\geq 1)$,  $\{\widehat{U}_k^{(t)}\}_{k=1}^{d-1}$ or $\{\widehat{V}_k^{(t)}\}_{k=2}^d$, are within constant distance to the true underlying subspaces. 
The developed upper bound can be significantly smaller than $C\left\|\bfc{Z}\right\|_{\rm F}^2$, the classic upper bound induced from the approximation error (e.g., Theorem 2.2 in \cite{oseledets2011tensor}), especially in the high-dimensional setting ($p\gg r$).

\begin{rmk}[Interpretation of error bounds in Theorem \ref{thm:upper_deterministic}]
Here, we provide some explanation for $A_{k}^{(2t+1)}$ and $B^{(2t+1)}$ in the error bound \eqref{ineq:upper_deterministic}. By algebraic calculation, the TT-core estimation via backward update can be written as
\begin{align*}
	\begin{split}
	\widehat{V}_{k+1}^{(2t+1)} =&  SVD^R\Big\{(\widehat{U}^{(2t)})_{\text{prod}, k}^{(L)\top}([\bfc{X}]_k + [\bfc{Z}]_k)\Big((\widehat{V}^{(2t+1)})_{\text{prod}, k+2}^{(R)} \otimes I_{p_{k+1}}\Big)\Big\}, \quad \forall 1 \leq k \leq d-1
	\end{split}
\end{align*}
and
\begin{equation*}
	\widehat{V}_1^{(2t+1)} = ([\bfc{X}]_1 + [\bfc{Z}]_1)(\widehat{V}^{(2t+1)})_{\text{prod}, 2}^{(R)}.
\end{equation*}
From the definition of $A_k^{(2t+1)}$, we have see  $A_k^{(2t+1)}$ quantifies the error of the singular subspace estimate  $\widehat{V}_{k+1}^{(2t+1)}$ and $B^{(2t+1)}$ quantifies the error of the projected residual $\widehat{V}_1^{(2t+1)}$. By symmetry, similar interpretation also applies to $A_k^{(2t+2)}$ and $B^{(2t+2)}$ for the error bound of forward update \eqref{ineq:upper_deterministic_X^2t+2}.
\end{rmk}
\begin{rmk}[Proof Sketch of Theorem \ref{thm:upper_deterministic}]
	While the complete proof of Theorem \ref{thm:upper_deterministic} is provided in Section \ref{sec:proof_upper_determinisitc}, we provide a brief proof sketch here.
	
	Without loss of generality, we focus on \eqref{ineq:upper_deterministic} for $t = 0$ while other cases follows similarly. For convenience, we simply let $\widehat{U}_i, \widehat{V}_i$ denote $\widehat{U}_i^{(0)}, \widehat{V}_i^{(1)}$, respectively. First, by Lemma \ref{lm:tt_representation}, we can transform $[\widehat{X}^{(1)}]_1$, the outcome of backward update, to
	$$[\widehat{X}^{(1)}]_1 = [\bfc{Y}]_1P_{(\widehat{V}_d \otimes I_{p_2\dots p_{d-1}})\cdots (\widehat{V}_{3} \otimes I_{p_2})\widehat{V}_{2}}.$$ 
	Then we can further bound the estimation error of $\widehat{\bfc{X}}^{(1)}$ as
	\begin{equation*}
	\begin{split}
	\|\widehat{\bfc{X}}^{(1)}-\bfc{X}\|_\F^2 \leq & C \left\|[\bfc{Z}]_1(\widehat{V}_d \otimes I_{p_2\dots p_{d-1}})\cdots (\widehat{V}_{3} \otimes I_{p_2})\widehat{V}_{2}\right\|_{\F}^2\\ 
	& + C_d\sum_{k=2}^{d}\left\|[\bfc{X}]_1(\widehat{V}_d \otimes I_{p_2\dots p_{d-1}})\cdots (\widehat{V}_{k+1} \otimes I_{p_2\cdots p_k})(\widehat{V}_{k\perp} \otimes I_{p_2\cdots p_{k-1}})\right\|_{\F}^2.
	\end{split}
	\end{equation*}
	Next, based on Lemma \ref{lm:realignment} and \eqref{ineq:initial_condition}, we can prove
	\begin{equation*}
		\begin{split}
		&\left\|[\bfc{X}]_1(\widehat{V}_d \otimes I_{p_2\dots p_{d-1}})\cdots (\widehat{V}_{k+1} \otimes I_{p_2\cdots p_k})(\widehat{V}_{k\perp} \otimes I_{p_2\cdots p_{k-1}})\right\|_{\F}\\
		=&\left\|[\bfc{X}]_{k-1}(\widehat{V}_d \otimes I_{p_k\dots p_{d-1}})\cdots (\widehat{V}_{k+1} \otimes I_{p_k})\widehat{V}_{k\perp}\right\|_{\F}\\
		\leq& C_d\left\|\widehat{U}_{k-1}^{\top}(I_{p_{k-1}} \otimes \widehat{U}_{k-2}^{\top})\cdots(I_{p_2\cdots p_{k-1}} \otimes \widehat{U}_1^{\top})[\bfc{X}]_{k-1}(\widehat{V}_d \otimes I_{p_k\dots p_{d-1}})\cdots (\widehat{V}_{k+1} \otimes I_{p_k})\widehat{V}_{k\perp}\right\|_{\F}.
		\end{split}
	\end{equation*}
	Finally, we apply the perturbation projection error bound (Lemma \ref{lm:perturbation}) to prove that
	\begin{equation*}
	\begin{split}
	&C_d\left\|\widehat{U}_{k-1}^{\top}(I_{p_{k-1}} \otimes \widehat{U}_{k-2}^{\top})\cdots(I_{p_2\cdots p_{k-1}} \otimes \widehat{U}_1^{\top})[\bfc{X}]_{k-1}(\widehat{V}_d \otimes I_{p_k\dots p_{d-1}})\cdots (\widehat{V}_{k+1} \otimes I_{p_k})\widehat{V}_{k\perp}\right\|_{\F}\\
	\leq& C_d\left\|\widehat{U}_{k-1}^{\top}(I_{p_{k-1}} \otimes \widehat{U}_{k-2}^{\top})\cdots(I_{p_2\cdots p_{k-1}} \otimes \widehat{U}_1^{\top})[\bfc{Z}]_{k-1}(\widehat{V}_d \otimes I_{p_k\dots p_{d-1}})\cdots (\widehat{V}_{k+1} \otimes I_{p_k})\right\|_{\F}.
	\end{split}
	\end{equation*}	
	Theorem \eqref{thm:upper} is proved by combing all inequalities above.
\end{rmk}

Next, we establish a decomposition formula for the approximation error, i.e., the objective function in \eqref{eq:minimization} $\left\|\bfc{Y} - \bfc{X}^{(t)}\right\|_{\F}^2$, and show that the approximation error is monotone decreasing through TTOI iterations.
\begin{thm}[Approximation error decays through iterations]\label{thm:convergence}
	We implement TTOI on $\bfc{Y}$. Let $\widehat{\bfc{X}}^{(t)}$ be the outcome after the $t$th iteration. For any $k\geq 1$, we have
	\begin{equation}\label{ineq:convergence}
	\text{(Approximation error decay)}\quad 	\|\bfc{Y}\|_\F^2 - \|\widehat{\bfc{X}}^{(t+1)}\|_{\F}^2 \leq \|\bfc{Y}\|_\F^2 - \|\widehat{\bfc{X}}^{(t)}\|_{\F}^2,
	\end{equation}
	\begin{equation}
	\text{(Approximation error decomposition)} \quad \|\bfc{Y} - \widehat{\bfc{X}}^{(t+1)}\|_{\F}^2 = \|\bfc{Y}\|_\F^2 - \|\widehat{\bfc{X}}^{(t+1)}\|_{\F}^2.
	\end{equation}
\end{thm}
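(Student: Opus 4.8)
The plan is to establish the two displays separately. The ``approximation error decomposition'' $\|\bfc Y-\widehat{\bfc X}^{(t+1)}\|_\F^2=\|\bfc Y\|_\F^2-\|\widehat{\bfc X}^{(t+1)}\|_\F^2$ will follow from the Pythagorean identity once each iterate is recognized as an orthogonal projection of $\bfc Y$ along one sequential unfolding, and the ``approximation error decay'' is---after cancelling $\|\bfc Y\|_\F^2$ from both sides---equivalent to $\|\widehat{\bfc X}^{(t+1)}\|_\F^2\ge\|\widehat{\bfc X}^{(t)}\|_\F^2$, which I would obtain by telescoping a sequence of elementary ``projecting does not increase the Frobenius norm'' inequalities.

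\emph{Step 1: projection form of the iterates.} For an even index $t$ (including the initialization $t=0$), Lemma~\ref{lm:R_0}, equation~\eqref{eq1}, gives $\widetilde R_{d-1}^{(t)}=(\widehat U^{(t)})_{\mathrm{prod},d-1}^{(L)\top}[\bfc Y]_{d-1}$, while Algorithms~\ref{algorithm:TTSVD} and~\ref{algorithm:forward} output $[\widehat X^{(t)}]_{d-1}=(\widehat U^{(t)})_{\mathrm{prod},d-1}^{(L)}\,\widetilde R_{d-1}^{(t)}$. Since a Kronecker product of an identity matrix with a column-orthonormal matrix is column-orthonormal, and a product of column-orthonormal matrices is column-orthonormal, $(\widehat U^{(t)})_{\mathrm{prod},d-1}^{(L)}$ has orthonormal columns, so $[\widehat X^{(t)}]_{d-1}=P_t[\bfc Y]_{d-1}$ with $P_t$ an orthogonal projection. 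For an odd $t$, Algorithm~\ref{algorithm:backward} gives directly $[\widehat X^{(t)}]_1=\widehat V_1^{(t)}V_{\mathrm{prod}}^{(t)\top}=[\bfc Y]_1 V_{\mathrm{prod}}^{(t)}V_{\mathrm{prod}}^{(t)\top}$ with $V_{\mathrm{prod}}^{(t)}=(\widehat V^{(t)})_{\mathrm{prod},2}^{(R)}$ column-orthonormal, i.e.\ $[\widehat X^{(t)}]_1=[\bfc Y]_1 P_t'$ with $P_t'$ an orthogonal projection. Because the Frobenius norm is invariant under unfolding and $\|M-PM\|_\F^2=\|M\|_\F^2-\|PM\|_\F^2$ for an orthogonal projection $P$, with $\|PM\|_\F=\|\widehat{\bfc X}^{(t)}\|_\F$, we get $\|\bfc Y-\widehat{\bfc X}^{(t)}\|_\F^2=\|\bfc Y\|_\F^2-\|\widehat{\bfc X}^{(t)}\|_\F^2$ for every $t$; specializing to $t+1$ is the decomposition display.

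\emph{Step 2: monotone decay.} By the decomposition at indices $t$ and $t+1$, it suffices to prove $\|\widehat{\bfc X}^{(t+1)}\|_\F^2\ge\|\widehat{\bfc X}^{(t)}\|_\F^2$; I take $t$ even, the odd case being symmetric after reversing the order of the modes and exchanging the roles of $\widehat U,[\bfc Y]_1$ with $\widehat V,[\bfc Y]_{d-1}$. By Step~1, $\|\widehat{\bfc X}^{(t)}\|_\F^2=\|\widetilde R_{d-1}^{(t)}\|_\F^2$ and $\|\widehat{\bfc X}^{(t+1)}\|_\F^2=\|[\bfc Y]_1 Q_1\|_\F^2$, where $Q_k:=(\widehat V^{(t+1)})_{\mathrm{prod},k+1}^{(R)}$; here the backward update amounts to $Q_{d-1}=\widehat V_d^{(t+1)}=\SVD^R_{r_{d-1}}(\widetilde R_{d-1}^{(t)})$ and, for $1\le k\le d-2$, $\widehat V_{k+1}^{(t+1)}=\SVD^R_{r_k}\!\bigl(\widetilde R_k^{(t)}(Q_{k+1}\otimes I_{p_{k+1}})\bigr)$ with $Q_k=(Q_{k+1}\otimes I_{p_{k+1}})\widehat V_{k+1}^{(t+1)}$. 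Writing $R_1^{(t)}=[\bfc Y]_1$, the key estimate is that for $1\le k\le d-2$,
\[
\| R_k^{(t)} Q_k \|_\F^2 \ \ge\ \| \widetilde{R}_k^{(t)} Q_k \|_\F^2 \ =\ \| \widetilde{R}_k^{(t)} ( Q_{k+1} \otimes I_{p_{k+1}} ) \|_\F^2 \ =\ \| R_{k+1}^{(t)} Q_{k+1} \|_\F^2 :
\]
the inequality holds because $\widetilde R_k^{(t)}=\widehat U_k^{(t)\top}R_k^{(t)}$, so left-multiplication by $\widehat U_k^{(t)\top}$ discards the component orthogonal to $\widehat U_k^{(t)}$; the first equality holds because $\widehat V_{k+1}^{(t+1)}$ consists of the top-$r_k$ right singular vectors of $\widetilde R_k^{(t)}(Q_{k+1}\otimes I_{p_{k+1}})$, a matrix with only $r_k$ rows (hence rank $\le r_k$), so right-multiplication by $\widehat V_{k+1}^{(t+1)}$ preserves the Frobenius norm; and the second equality is the realignment bookkeeping of Lemma~\ref{lm:realignment} applied to $R_{k+1}^{(t)}=\mathrm{Reshape}(\widetilde R_k^{(t)},\,r_kp_{k+1},\,p_{k+2}\cdots p_d)$. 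The endpoint $k=d-1$ reads $\|R_{d-1}^{(t)}Q_{d-1}\|_\F^2\ge\|\widetilde R_{d-1}^{(t)}Q_{d-1}\|_\F^2=\|\widetilde R_{d-1}^{(t)}\widehat V_d^{(t+1)}\|_\F^2=\|\widetilde R_{d-1}^{(t)}\|_\F^2$. Chaining from $k=1$, where $\|R_1^{(t)}Q_1\|_\F^2=\|[\bfc Y]_1Q_1\|_\F^2=\|\widehat{\bfc X}^{(t+1)}\|_\F^2$, down to $k=d-1$, where the chain terminates at $\|\widetilde R_{d-1}^{(t)}\|_\F^2=\|\widehat{\bfc X}^{(t)}\|_\F^2$, yields \eqref{ineq:convergence}.

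\emph{Expected main obstacle.} Everything is mechanical except the second equality in the displayed chain, namely that $\widetilde R_k^{(t)}(Q_{k+1}\otimes I_{p_{k+1}})$ and $R_{k+1}^{(t)}Q_{k+1}$ are the same array up to a permutation of its rows and columns. Proving this cleanly requires carefully matching the \texttt{Reshape} convention with the ``which index runs fastest'' convention of the Kronecker product, and an off-by-one slip in the mode ordering there would be fatal; this is, however, exactly the rearrangement already packaged in Lemma~\ref{lm:realignment}, which I would invoke rather than reprove. The remaining ingredients---the projection form of each iterate, the two norm-monotonicity facts, and the symmetry reduction for odd $t$---are routine.
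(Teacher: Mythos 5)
Your proposal is correct and follows essentially the same route as the paper's proof: the Pythagorean identity applied to the projection form of each unfolded iterate gives the decomposition, and the decay follows by telescoping the three facts you isolate (projection onto $\widehat U_k^{(t)}$ cannot increase the Frobenius norm, right-multiplication by the full set of right singular vectors of a matrix with at most $r_k$ rows preserves it, and the reshape of Lemma \ref{lm:realignment} is norm-preserving). The only cosmetic difference is parity: you treat the even-to-odd (forward-to-backward) transition explicitly and defer the other to symmetry, whereas the paper does the reverse.
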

See Section \ref{sec:proof_convergence} for the proof of Theorem \ref{thm:convergence}.

\section{TTOI for Tensor-Train Spiked Tensor Model}\label{tensor-spiked-model}

In this section, we further focus on a probabilistic setting, \emph{spiked tensor model}, where the noise tensor $\bfc{Z}$ has independent, mean zero, and $\sigma$-sub-Gaussian entries (see definition in Section \ref{subsec:notation}). The spiked tensor model has been widely studied as a benchmark setting for tensor PCA/SVD and dimension reduction in recent literature in machine learning, information theory, statistics, and data science \citep{lesieur2017statistical,richard2014statistical,perry2020statistical,wein2019kikuchi,zhang2018tensor}. The central goal therein is to discover the underlying low-rank tensor $\bfc{X}$. Most of the existing works focused on tensors with Tucker or CP decomposition. 

Under the spiked tensor model, we can verify that the initialization step of TTOI gives sufficiently good initial estimations with high probability that matches the required condition in Theorem \ref{thm:upper_deterministic}. 

\begin{thm}[Probabilistic bound for initial estimates and projected noise]\label{thm:upper_initialization}
	Suppose $\bfc{X}$ is TT-decomposable as \eqref{TT} and $\bfc{Z}$ have independent zero mean and $\sigma$-sub-Gaussian random variables. Denote  $p = \min\{p_1, \cdots,p_d\}$. If there exists a constant $C_{gap}$ such that $\lambda_k = s_{r_k}([\bfc{X}]_k) \geq C_{gap}\left((\sum_{i=1}^{d}p_ir_{i-1}r_i)^{1/2} + (p_{k+1}\cdots p_d)^{1/2}\right)\sigma$ for $1 \leq k \leq d-1$, then there exist some constants $C, c > 0$ and $C_d > 0$ that only depends on $d$, with probability at least $1 - C\exp(-cp)$, 
\begin{equation}\label{ineq36}
	\max_{k=1,\ldots, d-1} \left\|\sin\Theta\left(\widehat{U}_k^{(0)}, \widetilde{U}_k^{(0)}\right)\right\| \leq \frac{1}{2}, 
\end{equation}
\begin{equation}\label{ineq37}
	\max_{\substack{k=1,\ldots, d-1\\t=2, 4, 6, \ldots}} \left\|\sin\Theta\left(\widehat{U}_k^{(t)}, \widetilde{U}_k^{(t)}\right)\right\|, \max_{\substack{k=2,\ldots, d\\t=1,3, 5, \ldots}} \left\|\sin\Theta\left(\widehat{V}_k^{(t)}, \widetilde{V}_k^{(t)}\right)\right\|
	\leq \frac{1}{2}, 
\end{equation}and for all $t \geq 1$,
\begin{equation}\label{ineq35}
	\begin{split}
	 \max\{A_k^{(t)}, B^{(t)}\} \leq C_d\sigma^2 \sum_{i=1}^d p_i r_ir_{r-1}.
	\end{split}
\end{equation}
	Here, $\widetilde{U}_k^{(t)}$, $\widetilde{V}_k^{(t)}$, $A_k^{(t)}$ and $B^{(t)}$ are defined in Theorem \ref{thm:upper_deterministic}.
\end{thm}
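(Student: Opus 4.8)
The plan is to place all three conclusions \eqref{ineq36}, \eqref{ineq37}, \eqref{ineq35} on a single event of probability at least $1-Ce^{-cp}$ by establishing, uniformly over every sequence of orthonormal factors that Algorithms \ref{algorithm:TTSVD}, \ref{algorithm:backward}, \ref{algorithm:forward} can generate, two facts: (i) a \emph{signal-preservation} bound --- the $r_k$-th singular value of the noiseless, partially projected matricization retains a dimension-free constant fraction of $\lambda_k = s_{r_k}([\bfc{X}]_k)$ provided the earlier factors are within $\sin\Theta$-distance $1/2$ of the truth; and (ii) a \emph{noise-control} bound --- the correspondingly projected copies of $\bfc{Z}$ have operator norm $O\big((\sum_i p_ir_{i-1}r_i)^{1/2}+(p_{k+1}\cdots p_d)^{1/2}\big)\sigma$ and Frobenius norm squared $O\big(\sigma^2\sum_i p_ir_{i-1}r_i\big)$. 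Granting (i) and (ii), the statement follows by a double induction --- over the sweep index $k=1,\dots,d-1$ within a pass, and over the iteration number $t$ --- applying a Davis--Kahan/Wedin type perturbation bound (as in the proof sketch of Theorem \ref{thm:upper_deterministic}) at each SVD step: with $C_{gap}$ a sufficiently large constant, the hypothesis $\lambda_k\ge C_{gap}\big((\sum_i p_ir_{i-1}r_i)^{1/2}+(p_{k+1}\cdots p_d)^{1/2}\big)\sigma$ forces the ratio (projected noise)/(projected signal) below the threshold that yields $\|\sin\Theta\|\le 1/2$, giving \eqref{ineq36} and \eqref{ineq37}; and (ii) is exactly the content of \eqref{ineq35} for the specific $A_k^{(t)},B^{(t)}$.

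For (i), expand the partially projected matricization using Lemmas \ref{lm:R_0} and \ref{lm:tt_representation}: at step $k$ of TT-SVD the matrix being decomposed has noiseless part $(I_{p_k}\otimes(\widehat U^{(0)})_{\text{prod},k-1}^{(L)\top})[\bfc{X}]_k$, and by Lemma \ref{lm:tt_representation} the projection only touches the ``left'' block $(I_{p_2\cdots p_k}\otimes G_1)\cdots[\bfc{G}_{k-1}]_2$ built from the first $k-1$ cores. Since the induction hypothesis $\|\sin\Theta(\widehat U_j^{(0)},\widetilde U_j^{(0)})\|\le 1/2$ makes each $\widehat U_j^{(0)\top}$ act on the relevant signal subspace with all singular values at least $\sqrt{3}/2$, peeling off the cores one at a time costs only a bounded multiplicative factor, so $s_{r_k}\big((I_{p_k}\otimes(\widehat U^{(0)})_{\text{prod},k-1}^{(L)\top})[\bfc{X}]_k\big)\ge c_d\lambda_k$; the backward half of Lemma \ref{lm:R_0} together with Lemma \ref{lm:realignment} gives the analogue for the $\widetilde V_k^{(t)}$ appearing in \eqref{ineq37} and for the forward updates. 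This step is purely deterministic tensor algebra.

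Part (ii) is the crux and the main obstacle, because the projections $(\widehat U^{(t)})_{\text{prod},k}^{(L)}$ and $(\widehat V^{(t)})_{\text{prod},k}^{(R)}$ depend on $\bfc{Z}$; quantities such as $(I_{p_k}\otimes(\widehat U^{(2t+2)})_{\text{prod},k-1}^{(L)\top})[\bfc{Z}]_k(\widehat V^{(2t+1)})_{\text{prod},k+1}^{(R)}$ are not fixed sub-Gaussian matrices and must be controlled simultaneously over all admissible choices. The decisive point is that one must \emph{not} net the ambient Stiefel manifold $\mathbb{O}_{p_1\cdots p_{k-1},r_{k-1}}$, whose $\log$-covering number $p_1\cdots p_{k-1}r_{k-1}$ is exponential in $d$; instead one exploits the train structure and nets each factor $\widehat U_j^{(\cdot)}\in\mathbb{O}_{p_jr_{j-1},r_j}$ and $\widehat V_j^{(\cdot)}$ separately at a constant scale $\varepsilon$, so the total $\log$-covering number is $O(\sum_j p_jr_{j-1}r_j)$. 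For a fixed admissible family of factors, the projected noise is a linear image of the independent $\sigma$-sub-Gaussian array $\bfc{Z}$ under a norm-one map whose output lives in a coordinate space of dimension at most $p_kr_{k-1}r_k$ (capped by $p_{k+1}\cdots p_d$ only at the top step involving all of $[\bfc{Z}]_1$), so standard sub-Gaussian matrix tails give operator norm $\lesssim((\sum_j p_jr_{j-1}r_j)^{1/2}+(p_{k+1}\cdots p_d)^{1/2})\sigma$ and Frobenius-norm-squared $\lesssim\sigma^2\sum_j p_jr_{j-1}r_j$ with failure probability $\exp(-c\,C_d\sum_j p_jr_{j-1}r_j)$; taking $C_d$ large lets a Lipschitz estimate for the product map $(\{U_j\},\{V_j\})\mapsto$ (composed projection) --- valid since every factor has unit operator norm --- transfer the bound from the net to \emph{all} admissible projections and absorb the union bound over the net. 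Together with the base case $\|[\bfc{Z}]_1\|\lesssim(\sqrt{p_1}+\sqrt{p_2\cdots p_d})\sigma$ from random matrix theory, all the displayed bounds then hold on one event of probability $1-Ce^{-cp}$ (every exponent is $\gtrsim p$ since $\sum_j p_jr_{j-1}r_j\ge p_1\ge p$), and feeding them into the $k$- and $t$-inductions of the first paragraph finishes the proof.
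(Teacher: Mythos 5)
Your proposal is correct and follows essentially the same route as the paper: the paper's proof consists of exactly your two ingredients --- a deterministic signal-preservation bound $s_{r_k}\bigl((I_{p_k}\otimes(\widehat U^{(0)})_{\text{prod},k-1}^{(L)\top})[\bfc{X}]_k\bigr)\ge(\sqrt{3/4})^{k-1}\lambda_k$ obtained by peeling off one core at a time under the induction hypothesis, and a uniform noise bound (Lemma \ref{lm:concentration_gaussian}) proved by netting each factor $U_j, V_j$ separately at constant scale and transferring to all admissible projections via the unit-operator-norm Lipschitz estimate --- combined through Lemma \ref{lm:perturbation} in an induction over the sweep index $k$ and then, on the resulting good event, over the iteration index $t$.
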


The proof of Theorem \ref{thm:upper_initialization} is provided in Section \ref{sec:proof_upper_initialization}. Based on Theorems \ref{thm:upper_deterministic} and \ref{thm:upper_initialization}, we can further prove:
\begin{cor}[Upper bound for estimation error]\label{thm:upper}
  	Suppose $\bfc{X}$ can be decomposed as \eqref{TT}, $\bfc{Z}_{i_1, \dots, i_d}$ are independent zero mean and $\sigma$-sub-Gaussian random variables, $p = \min\{p_1, \cdots,p_d\}$. Suppose there exists a constant $C_{gap}$ such that $\lambda_k = s_{r_k}([\bfc{X}]_k) \geq C_{gap}\left((\sum_{i=1}^{d}p_ir_{i-1}r_i)^{1/2} + (p_{k+1}\cdots p_d)^{1/2}\right)\sigma$ for $1 \leq k \leq d-1$. Then with probability at least $1 - Ce^{-cp}$, for all $t \geq 1$,
  	\begin{equation}\label{ineq:upper bound-spiked-model}
  		\|\widehat{\bfc{X}}^{(t)} - \bfc{X}\|_{\F}^2 \leq  C_d\sigma^2\sum_{i=1}^{d}p_ir_ir_{i-1}.
  	\end{equation}
\end{cor}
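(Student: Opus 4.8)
The plan is to combine the deterministic error bound of Theorem~\ref{thm:upper_deterministic} with the probabilistic guarantees of Theorem~\ref{thm:upper_initialization}. The key observation is that the two theorems dovetail: Theorem~\ref{thm:upper_deterministic} says that, \emph{conditioned} on the event that the relevant estimated subspaces are within constant sin$\Theta$ distance of their population counterparts (condition \eqref{ineq:initial_condition} for backward updates, and its analogue for forward updates), the estimation error $\|\widehat{\bfc{X}}^{(t)} - \bfc{X}\|_{\F}^2$ is controlled by $C_d(\sum_{k=1}^{d-1}A_k^{(t)} + B^{(t)})$; and Theorem~\ref{thm:upper_initialization} says that, under the signal-strength assumption $\lambda_k \geq C_{gap}((\sum_i p_i r_{i-1} r_i)^{1/2} + (p_{k+1}\cdots p_d)^{1/2})\sigma$, with probability at least $1 - C e^{-cp}$ the subspace conditions \eqref{ineq36}--\eqref{ineq37} hold \emph{simultaneously} for all $t$ and all $k$, and moreover $\max\{A_k^{(t)}, B^{(t)}\} \leq C_d \sigma^2 \sum_{i=1}^d p_i r_i r_{i-1}$ for all $t \geq 1$.

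First I would fix the high-probability event $\mathcal{E}$ on which \eqref{ineq36}, \eqref{ineq37}, and \eqref{ineq35} all hold; by Theorem~\ref{thm:upper_initialization}, $\bbP(\mathcal{E}) \geq 1 - C e^{-cp}$. Work on $\mathcal{E}$ from now on. Next I would verify that the hypotheses of Theorem~\ref{thm:upper_deterministic} are met for every iteration. For a backward update producing $\widehat{\bfc{X}}^{(2t+1)}$, the required condition is $\|\sin\Theta(\widehat{U}_k^{(2t)}, \widetilde{U}_k^{(2t)})\| \leq c_0$ for all $1 \leq k \leq d-1$; on $\mathcal{E}$ the bounds \eqref{ineq36} (for $t=0$) and \eqref{ineq37} (for even $t \geq 2$) give $\|\sin\Theta(\widehat{U}_k^{(2t)}, \widetilde{U}_k^{(2t)})\| \leq 1/2$, so we may take $c_0 = 1/2$. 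Symmetrically, for a forward update producing $\widehat{\bfc{X}}^{(2t+2)}$, the condition $\|\sin\Theta(\widehat{V}_k^{(2t+1)}, \widetilde{V}_k^{(2t+1)})\| \leq c_0$ for $2 \leq k \leq d$ follows from \eqref{ineq37} with $c_0 = 1/2$. Hence Theorem~\ref{thm:upper_deterministic} applies to both the odd (backward) and even (forward) iterations, yielding
\begin{equation*}
\|\widehat{\bfc{X}}^{(t)} - \bfc{X}\|_{\F}^2 \leq C_d\left(\sum_{k=1}^{d-1}A_k^{(t)} + B^{(t)}\right)
\end{equation*}
for every $t \geq 1$ (with possibly different constants $C_d$ for the two parities, which we absorb into a single $C_d$).

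Finally, I would plug in the noise bound. On $\mathcal{E}$, \eqref{ineq35} gives $A_k^{(t)} \leq C_d \sigma^2 \sum_{i=1}^d p_i r_i r_{i-1}$ and $B^{(t)} \leq C_d \sigma^2 \sum_{i=1}^d p_i r_i r_{i-1}$ for all $t \geq 1$ and all $k$. Summing the $d-1$ terms $A_k^{(t)}$ and adding $B^{(t)}$ contributes only an extra factor of $d$, which is again absorbed into $C_d$. Therefore, on $\mathcal{E}$, for all $t \geq 1$,
\begin{equation*}
\|\widehat{\bfc{X}}^{(t)} - \bfc{X}\|_{\F}^2 \leq C_d \cdot d \cdot C_d \sigma^2 \sum_{i=1}^d p_i r_i r_{i-1} \leq C_d' \sigma^2 \sum_{i=1}^d p_i r_i r_{i-1},
\end{equation*}
which is exactly \eqref{ineq:upper bound-spiked-model} after renaming the constant. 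Since $\bbP(\mathcal{E}) \geq 1 - C e^{-cp}$, this completes the proof.

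I do not expect any genuine obstacle in this corollary itself: the heavy lifting is already done in Theorems~\ref{thm:upper_deterministic} and \ref{thm:upper_initialization}, and the corollary is essentially a bookkeeping composition of the two. The only point requiring mild care is ensuring the subspace conditions are checked for the \emph{correct} iteration index and parity (the $\widehat{U}$'s from the even/initialization iterations feed the backward updates, while the $\widehat{V}$'s from the odd iterations feed the forward updates), and that the union over all $t$ does not cost anything in probability — but this is precisely the content of the ``for all $t$'' quantifiers already built into \eqref{ineq37} and \eqref{ineq35} in Theorem~\ref{thm:upper_initialization}. One should also note $r_{r-1}$ appearing in \eqref{ineq35} is a typo for $r_{i-1}$, matching the $\sum_i p_i r_i r_{i-1}$ in the statement.
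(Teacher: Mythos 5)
Your proposal is correct and matches the paper's argument for the stated high-probability bound: the paper likewise defines the good event on which \eqref{ineq35} and the sin$\Theta$ conditions hold (probability $1-Ce^{-cp}$ by Theorem \ref{thm:upper_initialization}) and then invokes Theorem \ref{thm:upper_deterministic} to conclude. The paper's written proof additionally converts this into a bound on the expected risk $\bbE\|\widehat{\bfc{X}}^{(t)}-\bfc{X}\|_{\F}^2$ by controlling the fourth moment on the complement event, but that extra step is not needed for the corollary as stated, so your omission of it is not a gap.
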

The proof of Corollary \ref{thm:upper} is provided in Section \ref{sec:proof_upper}.
\begin{rmk}[Interpretation of Corollary \ref{thm:upper}]
	Note that the TT-cores $G_1, \bfc{G}_i, G_d$ respectively have $p_1r_1,  p_ir_ir_{i-1}, p_dr_{d-1}$ free parameters, the upper bound \eqref{ineq:upper bound-spiked-model} can be seen as the noise level $\sigma^2$ times the degrees of freedom of the low TT rank tensors. 
\end{rmk}

Next, we develop a minimax lower bound for the low TT rank structure estimation. Consider the following general class of tensors with dimension $\bp = (p_1, \dots, p_d)$ and TT rank $\br = (r_1, \dots, r_{d-1})$,
\begin{equation}\label{lower:class}
\mathcal{F}_{\bp, \br}(\bm\lambda) = \left\{\bfc{X} \in \bbR^{p_1 \times \cdots \times p_d}, \bfc{X} \text{ can be decomposed as } \eqref{TT}, s_{r_k}\left([\bfc{X}]_k\right) \geq \lambda_k, 1 \leq k \leq d-1\right\},
\end{equation}
and a class of distributions of $\sigma$-sub-Gaussian noise tensors 
\begin{equation}\label{eq:distribution-class-D}
	\mathcal{D} = \{D: \text{ if } \bfc{Z} \sim D, \text{ then } \bfc{Z}_{i_1, \dots, i_d} \text{ are indep. zero mean and } \sigma \text{ sub-Gaussian random variables}\}.
\end{equation}

Here, the constraints on the least singular value of $[\bfc{X}]_k$ and the $\sigma$-sub-Gaussian assumption correspond to the conditions required for upper bound in Theorem \ref{thm:upper_initialization}. 
\begin{thm}[Lower bound]\label{thm:lower}
	Consider the order-$d$ TT spiked tensor model \eqref{eq:obs_model} and distribution class $\mathcal{D}$ in \eqref{eq:distribution-class-D}. 
	Assume $p = \min\{p_1, \dots, p_d\} \geq C_0$ for some large constant $C_0$, $r_1\leq p_1/2, r_i\leq p_ir_{i-1}/2, r_{i-1}\leq p_ir_{i}/2$ for $2 \leq i \leq d-1$, $r_{d-1} \leq p_d$, and $\lambda_i > 0$. Also assume $r_1r_2 \leq p_1$ if $d = 3$. Then there exists a constant $c_d > 0$ that only depends on $d$ such that 
  	\begin{equation}\label{ineq:lower_bound_Gaussian}
  	\inf_{\widehat{\bfc{X}}}\sup_{\bfc{X} \in \mathcal{F}_{\bp, \br}(\bm\lambda), D \in \mathcal{D}}\bbE_{\bfc{Z} \sim D}\left\|\widehat{\bfc{X}} - \bfc{X}\right\|_{\F}^2 \geq c_d\sigma^2\sum_{i=1}^{d}p_ir_ir_{i-1}.	
  \end{equation} 
\end{thm}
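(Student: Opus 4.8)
\noindent\textbf{Proof proposal for Theorem \ref{thm:lower}.}
The plan is to handle the $d$ TT-cores one at a time: for each $k\in\{1,\dots,d\}$ I build a finite subfamily of $\mathcal F_{\bp,\br}(\bm\lambda)$ over which the minimax risk is already of order $\sigma^2 N_k$, with $N_1=p_1r_1$, $N_k=p_kr_{k-1}r_k$ for $2\le k\le d-1$, and $N_d=p_dr_{d-1}$; since $\sum_{k=1}^{d}N_k=\sum_{i=1}^{d}p_ir_ir_{i-1}$ and there are only $d$ of them, $\max_k N_k\ge d^{-1}\sum_{i=1}^{d}p_ir_ir_{i-1}$, which gives \eqref{ineq:lower_bound_Gaussian} with $c_d\propto 1/d$. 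As the i.i.d.\ $N(0,\sigma^2)$ noise lies in $\mathcal D$, it suffices to bound the risk under this Gaussian model, where $\mathrm{KL}(P_{\bfc X}\|P_{\bfc X'})=\|\bfc X-\bfc X'\|_{\F}^2/(2\sigma^2)$, so I will invoke Fano's inequality.

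Fix $k$. Pick fixed, well-conditioned ``environment'' cores $\{\bfc G_i^{(0)}\}_{i\neq k}$ --- a fixed realization of suitably scaled Gaussian cores all of whose sequential unfoldings have condition number $\le 2$; their existence is exactly what the halving hypotheses $r_1\le p_1/2$, $r_i\le p_ir_{i-1}/2$, $r_{i-1}\le p_ir_i/2$, $r_{d-1}\le p_d$, the bound $p\ge C_0$, and (for $d=3$) $r_1r_2\le p_1$ guarantee, via standard random-matrix concentration. By Lemma \ref{lm:tt_representation} with matricization index $k$ one has $[\bfc X]_k=L\,[\bfc G_k]_2\,N$, where $L$ depends only on $\bfc G_1^{(0)},\dots,\bfc G_{k-1}^{(0)}$ and $N$ only on $\bfc G_{k+1}^{(0)},\dots,\bfc G_d^{(0)}$; with the above choice $L$ has column-orthonormal and $N$ row-orthonormal structure, up to condition number $2$. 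Set $\mu:=C_1(\max_j\lambda_j\vee\sigma\sqrt{N_k})$ for a large absolute constant $C_1$, let the base be $\bfc X_0:=\mu\llbracket\bfc G_1^{(0)},\dots,\bfc G_d^{(0)}\rrbracket$, fix a Varshamov--Gilbert packing $\Omega\subseteq\{-1,1\}^{N_k}$ with $|\Omega|\ge e^{N_k/8}$ and pairwise Hamming distance $\ge N_k/4$, and for $\omega\in\Omega$ put $\bfc X(\omega):=\mu\llbracket\bfc G_1^{(0)},\dots,\bfc G_k^{(0)}+\gamma\bfc\Delta(\omega),\dots,\bfc G_d^{(0)}\rrbracket$, where $\bfc\Delta(\omega)$ reshapes $\omega$ into $\bbR^{r_{k-1}\times p_k\times r_k}$ and $\gamma:=c_1\sigma/\mu$ with $c_1$ small.

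Three facts then finish the $k$-th bound. First, perturbing only core $k$ keeps the TT-rank $\le\br$; and since $\|\gamma\bfc\Delta(\omega)\|\le\gamma\sqrt{N_k}=c_1\sigma\sqrt{N_k}/\mu\le c_1/C_1$ is a small constant, Weyl's inequality together with the good conditioning of $\bfc G_k^{(0)}$ and of the environment keeps $s_{r_j}([\bfc X(\omega)]_j)\ge c\mu\ge\lambda_j$ for all $1\le j\le d-1$, so $\bfc X(\omega)\in\mathcal F_{\bp,\br}(\bm\lambda)$. Second, since $L$ is column-orthonormal and $N$ row-orthonormal up to constants, $\|\bfc X(\omega)-\bfc X(\omega')\|_{\F}\asymp\mu\gamma\,\|[\bfc\Delta(\omega)-\bfc\Delta(\omega')]_2\|_{\F}=\mu\gamma\,\|\omega-\omega'\|_2$, and $\|\omega-\omega'\|_2^2\in[N_k,4N_k]$, so $\|\bfc X(\omega)-\bfc X(\omega')\|_{\F}^2\asymp\mu^2\gamma^2N_k=c_1^2\sigma^2N_k$ for $\omega\neq\omega'$. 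Third, therefore $\max_{\omega\ne\omega'}\mathrm{KL}(P_{\bfc X(\omega)}\|P_{\bfc X(\omega')})\le Cc_1^2N_k\le\tfrac1{16}\log|\Omega|$ once $c_1$ is small. Fano's inequality then yields $\inf_{\widehat{\bfc X}}\sup_{\omega\in\Omega}\bbE\|\widehat{\bfc X}-\bfc X(\omega)\|_{\F}^2\gtrsim\sigma^2N_k$; maximizing over $k$ proves the theorem.

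The genuinely delicate step is choosing the environment in the second paragraph. One needs the map $\bfc G_k\mapsto\bfc X$ to factor as $L(\cdot)N$ with $L$ essentially column-orthonormal and $N$ essentially row-orthonormal --- this is what makes the pairwise distances and the KL divergences proportional, hence Fano-compatible --- \emph{while at the same time} every one of the $d-1$ sequential unfoldings $[\bfc X]_j$, each of which passes through core $k$, remains well conditioned so that all $d-1$ spectral constraints $s_{r_j}([\bfc X]_j)\ge\lambda_j$ survive both the scaling by $\mu$ and the perturbation. Lemma \ref{lm:tt_representation}, the halving rank conditions, and the extra assumption $r_1r_2\le p_1$ when $d=3$ are precisely what make this simultaneous construction possible; the remaining ingredients (packing, Weyl, Fano) are routine.
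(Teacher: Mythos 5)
Your proposal is correct and follows the same architecture as the paper's proof: reduce to i.i.d.\ Gaussian noise, fix all cores except the $k$-th as realizations of (suitably scaled) Gaussian matrices whose unfoldings are well conditioned with high probability under the stated rank conditions, use the representation of Lemma \ref{lm:tt_representation} together with the product-of-singular-values bound (Lemma \ref{lm:singular_value_lower_bound}) to control every $s_{r_j}([\bfc X]_j)$ and to sandwich $\|\bfc X(\omega)-\bfc X(\omega')\|_{\F}$ between constant multiples of the core-level perturbation, then invoke (generalized) Fano and take the maximum over $k$. The one substantive difference is the local packing: the paper packs a small ball in the Stiefel manifold $\mathbb{O}_{p_ir_{i-1},r_i}$ in $\sin\Theta$ distance and then adds an independent Gaussian shift $S$ to the packed cores to restore the smallest-singular-value condition, whereas you take a fixed well-conditioned base core and add $\pm 1$ hypercube perturbations from a Varshamov--Gilbert code, restoring conditioning by Weyl's inequality. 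Your variant makes the separation and KL computations slightly cleaner (Frobenius distances are exactly Hamming distances, and no rotational alignment of packing elements is needed), at the cost of having to argue the Weyl step for all $d-1$ unfoldings; the paper's Grassmannian packing avoids Weyl but pays with the $O_k$-alignment bookkeeping and the extra randomization $S$. Two cosmetic caveats: the claimed condition number $2$ for the environment unfoldings should be a constant $C_d$ (the halving conditions only give a bounded aspect ratio, hence a bounded but not necessarily $\le 2$ condition number per factor, compounded over at most $d$ factors), and the lower bound $\|LMN\|_{\F}\ge s_{\min}(L)s_{\min}(N)\|M\|_{\F}$ requires $L$ to have full column rank and $N$ full row rank, which does follow from the chained rank inequalities but deserves the one-line induction. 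Neither affects the validity of the argument.
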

See Section \ref{sec:proof_lower} for the proof of Theorem \ref{thm:lower}.

\section{TTOI for Dimension Reduction and State Aggregation in High-order Markov Chain}\label{sec:example}

Since the introduction at the beginning of the 20th century, the Markov process has been ubiquitous in a variety of disciplines. In the literature, the first order Markov process, i.e., the future observation at $(t+1)$ is conditionally independent of those at times $1, \ldots, (t-1)$ given the immediate past observation at time $t$, has been commonly used and extensively studied. Moreover, the high-order Markov process often appear in many scenarios, where the future observation is affected by a longer history. For example, in the taxi travel trajectory, the future stop of a taxi not only depends on the current location but also the past path that reveals the direction this taxi is heading to \citep{benson2017spacey}. The high-order Markov processes have also been applied to inter-personal relationship \citep{raftery1985model}, financial econometrics \citep{tsay2005analysis}, traffic flow \citep{zhao2016high}, among many other applications. 

We specifically consider an ergodic, time-invariant, and $(d-1)$st order Markov process on a finite state space $\{1, \ldots, p\}$. That is, the future state $X_{t+d}$ depends on the current state $X_{t+d-1}$ and the previous $(d - 2)$ states $(X_{t+d-2}, \ldots, X_{t+1})$ jointly:
\begin{equation}\label{eq:transition-tensor}
\bbP\left(X_{t+d}| X_1, \ldots, X_{t+d-1}\right) = \bbP\left(X_{t+d}| X_{t + 1}, \ldots, X_{t+d-1}\right) = \bfc{P}_{[X_{t+1}, \ldots, X_{t+d}]}.
\end{equation}
Our goal is to achieve a reliable estimation of the transition tensor $\bfc{P}$ and to predict the future state $X_{t+d}$ based on an observable trajectory. Since the total number of free parameters in a $(d-1)$st order Markov transition tensor $\bfc{P}$ is $O(p^d)$ without further assumptions, it may be prohibitively difficult to infer $\bfc{P}$ in both statistics and computation even if $p$ and $d$ are only of moderate scale. Instead, a sufficient dimension reduction for high-order Markov processes is in demand. 

To enable the statistical inference and dimension reduction for high-order Markov processes, a powerful tool, mixed transition distribution model (MTD), was introduced \citep{raftery1985model}. The MTD model assumes that the distribution of future state is a linear combination of the distributions associated with the $(d-1)$ immediate past states.
The readers are also referred to \cite{berchtold2002mixture} for a survey on mixed transition distribution model. The linear assumption, however, does not take into account the potential interactions of past states that commonly appear in practice. For example in the New York taxi trip data, the interaction among past locations of a taxi indicates its potential future direction.

On the other hand, there is a recent surge of development in dimension reduction and state aggregation for first order Markov chains. For example, \cite{ganguly2014markov} considered the Markov chain aggregation and the application to biology; \cite{du2019mode} considered the rank-reduced Markov model and mode clustering; \cite{zhang2019spectral} considered Markov rank, aggregagability, and lumpability of Markov processes and proposed the dimension reduction and state aggregation methods through spectral decomposition with theoretical guarantees; \cite{sanders2017clustering} proposed clustering block model and proposed efficient algorithm to solve it; \cite{zhu2019learning} introduced a convex and non-convex methods to estimate the rank-reduced low-rank Markov transition matrix.  

Inspired by these work, we propose and study the \emph{state aggregation model} for the discrete-time high-order Markov processes as follows. 
\begin{Definition}[$(d-1)$st order state aggregatable Markov process]\label{def:rank-reduced MC}
	Suppose there exist maps $G_1: [p]\to \mathbb{R}^{r_1}$, $G_k: [p]\times \mathbb{R}^{r_{k-1}} \to \mathbb{R}^{r_k}$, $G_d: [p]\times\mathbb{R}^{r_{d-1}}\to \mathbb{R}$ such that $G_2,\ldots, G_d$ are linear: $G_k(X, \lambda_1 u+\lambda_2 v) = \lambda_1 G_k(X, u) + \lambda_2 G_k(X, v)$ for any vectors $u, v$, scalars $\lambda_1, \lambda_2\in \mathbb{R}$. We say a Markov process $\{X_1,X_2,\ldots\}$ is $(d-1)$st order state aggregatable if for all $t\geq 0$, the transition can be sequentially generated as follows,
	\begin{equation*}
		\begin{split}
		&\widetilde{P}_1(X_{t+1}) =  G_1(X_{t+1}) \in \bbR^{r_1},\\
		&\widetilde{P}_k(X_{t+1}, \dots, X_{t+k}) = G_k(X_{t+k}, \widetilde{P}_{k-1}(X_{t+1}, \dots, X_{t+k-1})) \in \bbR^{r_k}, \quad k= 2,\ldots, d-1,\\
		& \bbP\left(X_{t+d} | X_1, \dots, X_{t+d-1}\right) = \bbP\left(X_{t+d} | X_{t+1}, \dots, X_{t+d-1}\right) = G_d(X_{t+d}, \widetilde{P}_{d-1}(X_{t+1}, \dots, X_{t+d-1})).
		\end{split}
	\end{equation*}
\end{Definition}
In a $(d-1)$st order state aggregatable Markov process, the future state $X_{t+d}$ relies on a sequential aggregation of the previous $d-1$ states $X_{t+1},\ldots, X_{t+d-1}$ as follows: we first project $X_{t+1}$ to a $r_1$-dimensional vector $\widetilde{P}_1(X_{t+1})$ via $G_1$, then project $\widetilde{P}_1(X_{t+1})$ jointly with $X_{t+2}$ to a $r_2$-dimensional vector $\widetilde{P}_1(X_{t+1}, X_{t+2})$ via $G_2$. We repeat such the projection sequentially for $X_{t+3}, \ldots, X_{t+d}$ and yield the transition probability $\mathbb{P}\left(X_{t+d}|X_{t+1},\ldots, X_{t+d-1}\right)$. Also, see Figure \ref{fig:homc} for a pictorial illustration.

Based on the definition of the state aggregatable Markov chain, we can prove the corresponding probability transition tensor $\bfc{P}$ will have low TT rank. 
\begin{Proposition}\label{pr:Tucker-train}
	The transition tensor $\bfc{P}$ of the rank reduced high-order Markov model in Definition \ref{def:rank-reduced MC} has TT-rank no more than $(r_1,\dots,r_{d-1})$. In other words, $\bfc{P}$ satisfies $\rank([\bfc{P}]_k)\leq r_k$.
\end{Proposition}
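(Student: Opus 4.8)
The plan is to unroll the sequential construction in Definition~\ref{def:rank-reduced MC} into an explicit product of matrices. This exhibits $\bfc{P}$ directly in the TT format \eqref{TT} with cores of the prescribed sizes, and the bound $\rank([\bfc{P}]_k)\le r_k$ then drops out of the matricization formula of Lemma~\ref{lm:tt_representation}.

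First I would encode the maps $G_2,\dots,G_d$ as matrices, using their assumed linearity in the vector argument. For each state $x\in[p]$ and each $k\in\{2,\dots,d-1\}$, the linear map $v\mapsto G_k(x,v)$ from $\bbR^{r_{k-1}}$ to $\bbR^{r_k}$ is represented by some matrix $M_k(x)\in\bbR^{r_k\times r_{k-1}}$, and the linear functional $v\mapsto G_d(x,v)$ on $\bbR^{r_{d-1}}$ is represented by some vector $g_d(x)\in\bbR^{r_{d-1}}$, so that $G_d(x,v)=g_d(x)^\top v$. No structure is needed on $G_1$; I simply write $g_1(x):=G_1(x)\in\bbR^{r_1}$.

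Next, a one-line induction on $k$ using the recursion $\widetilde{P}_k(x_1,\dots,x_k)=G_k\bigl(x_k,\widetilde{P}_{k-1}(x_1,\dots,x_{k-1})\bigr)$ gives
$$\widetilde{P}_k(x_1,\dots,x_k)=M_k(x_k)M_{k-1}(x_{k-1})\cdots M_2(x_2)\,g_1(x_1),\qquad 2\le k\le d-1.$$
Taking $k=d-1$ in the last line of Definition~\ref{def:rank-reduced MC}, the entries of the transition tensor become
$$\bfc{P}_{[x_1,\dots,x_d]}=g_d(x_d)^\top M_{d-1}(x_{d-1})\cdots M_2(x_2)\,g_1(x_1)=g_1(x_1)^\top M_2(x_2)^\top\cdots M_{d-1}(x_{d-1})^\top g_d(x_d),$$
the second equality being the transpose of a scalar. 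Comparing with \eqref{TT}, this shows $\bfc{P}=\llbracket G_1,\bfc{G}_2,\dots,\bfc{G}_{d-1},G_d\rrbracket$, where $G_1\in\bbR^{p\times r_1}$ has rows $g_1(x_1)^\top$, $\bfc{G}_k\in\bbR^{r_{k-1}\times p\times r_k}$ has lateral slices $\bfc{G}_{k,[:,x_k,:]}=M_k(x_k)^\top$ for $2\le k\le d-1$, and $G_d\in\bbR^{p\times r_{d-1}}$ has rows $g_d(x_d)^\top$. Hence, by definition, $\bfc{P}$ has TT-rank at most $(r_1,\dots,r_{d-1})$. Finally, applying Lemma~\ref{lm:tt_representation} to this representation writes $[\bfc{P}]_k$ as a product of a $p^k\times r_k$ matrix and an $r_k\times p^{d-k}$ matrix — equivalently, group the displayed entrywise formula as $\widetilde{P}_k(x_1,\dots,x_k)^\top\,b(x_{k+1},\dots,x_d)$ with $b(x_{k+1},\dots,x_d):=\bigl(g_d(x_d)^\top M_{d-1}(x_{d-1})\cdots M_{k+1}(x_{k+1})\bigr)^\top\in\bbR^{r_k}$ — so $\rank([\bfc{P}]_k)\le r_k$ for every $1\le k\le d-1$, which is exactly the claim.

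I do not anticipate a genuine obstacle: the argument is algebraic bookkeeping. The points requiring care are (i) keeping straight the transposes and the mode-ordering convention of \eqref{TT}, so that each $M_k(x)^\top$ is placed in the correct core slot; and (ii) noting that $G_1$, which is not assumed linear, is harmless — as a map $[p]\to\bbR^{r_1}$ it is simply an arbitrary $p\times r_1$ matrix, which is precisely the form permitted for the first TT-core in \eqref{TT}. One may also check the small cases $d=2,3$ by hand, where the statement reduces to the ordinary low-rank factorization of a matrix.
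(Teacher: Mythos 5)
Your proof is correct and follows essentially the same route as the paper's: encode the linear maps $G_k(x,\cdot)$ as matrices (the paper does this via their action on canonical basis vectors, which yields exactly your $M_k(x)^\top$ as the core slices), unroll the recursion by induction, and read off the TT decomposition with cores of sizes $r_{k-1}\times p\times r_k$. The only cosmetic difference is that you make the final deduction $\rank([\bfc{P}]_k)\le r_k$ explicit via the matricization, whereas the paper stops at the TT representation and invokes the known identity between TT-rank and sequential unfolding ranks.
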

The proof of Proposition \ref{pr:Tucker-train} is provided in Section \ref{sec:proof_Tucker-train}.
\begin{figure}[ht!]
	\centering
	\begin{minipage}[t]{\linewidth}
		\centering
		\subfigure{
			\includegraphics[width=0.45\linewidth]{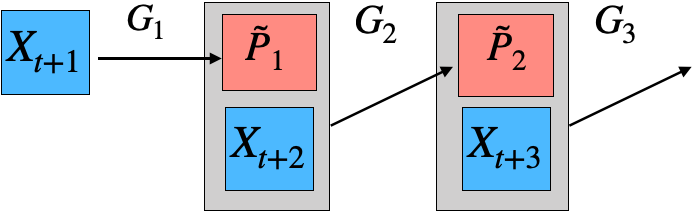}}
		\subfigure{
			\includegraphics[width=0.45\linewidth]{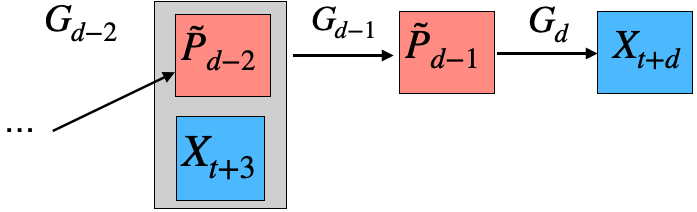}}
	\end{minipage}
	\caption{\small A pictorial illustration of a $(d-1)$st order state aggregatable Markov chain}\label{fig:homc}
\end{figure}

Next, we focus on a \emph{synchronous} or \emph{generative setting}, which can be seen as a high-order generalization of the classic observation model for the analysis of Markov (decision/reward) processes (see \cite{kearns1999finite} for an introduction), for the high-order Markov process. To be specific, for each sample index $k = 1, \dots, n$ and previous states $(i_1, \dots, i_{d-1}) \in [p]^{d-1}$, suppose we observe the next state $X(i_1, \dots, i_{d-1}; k)$ drawn from the Markov transition tensor $\bfc{P}$. It is natural to estimate $\bfc{P}$ via the empirical transition tensor: 
\begin{equation*}
	\widehat{\bfc{P}}^{\rm{emp}}_{i_1,\dots,i_d}=\sum_{k=1}^{n}1_{\{X(i_1, \dots, i_{d-1}; k) = i_d\}}\Big/n, \quad i_1, \dots, i_d \in \{1, \dots, p\}^{d}.
\end{equation*} 
Then, $\widehat{\bfc{P}}^{\rm{emp}}$ is an unbiased estimator of $\bfc{P}$. However, if the entries of $\bfc{P}$ are approximately balanced, the mean squared error of $\widehat{\bfc{P}}^{\rm{emp}}$ satisfies
\begin{equation}\label{ineq:P^emp}
  	  \begin{split}
  	  & \bbE\left\|\widehat{\bfc{P}}^{\rm{emp}} - \bfc{P}\right\|_{\rm F}^2 = \sum_{i_1, \dots, i_d}\Var\left(\widehat{\bfc{P}}_{i_1, \dots, i_d}^{\rm{emp}}\right) \\
  	  = & \sum_{i_1, \dots, i_{d-1}}\sum_{i_d}\frac{\bbP\left(i_d|i_1, \dots, i_{d-1}\right)\left(1 - \bbP\left(i_d|i_1, \dots, i_{d-1}\right)\right)}{n} \asymp \frac{p^{d-1}}{n},
  	  \end{split}
  	  \end{equation}
To obtain a more accurate estimator, we propose to first perform TTOI on $\widehat{\bfc{P}}^{\rm emp}$ to obtain $\widehat{\bfc{P}}^{(1)}$, then project each row of $[\widehat{\bfc{P}}^{(1)}]_{d-1}$, or equivalently, each mode-$d$ fiber of $\widehat{\bfc{P}}^{(1)}$, onto the simplex $S^{p-1} = \{x \in \bbR^{p}: \sum_{i=1}^{p}x_i = 1, x_i \geq 0 \text{ for all } 1 \leq i \leq p\}$ via probability simplex projection (see an implementation in \cite{duchi2008efficient}) and obtain $\widehat{\bfc{P}}$. 
  	
We establish an upper bound on estimation error for the TTOI estimator $\widehat{\bfc{P}}$.
\begin{Proposition}\label{thm:high_order_markov_chain}
   		Consider the synchronous or generative model for a $(d-1)$st order state aggregatable Markov process described above. Suppose the initialization condition \eqref{ineq:initial_condition} in Theorem \ref{thm:upper_deterministic} holds. Then with probability at least $1 - Ce^{-cp}$, the output of one-step TTOI followed by the probability simplex projection satisfies
   		\begin{equation*}
   		\left\|\widehat{\bfc{P}} - \bfc{P}\right\|_{\rm F}^2 \leq C\left(\max_{1 \leq i \leq d-1}r_i\right)\sum_{i=1}^dp_ir_ir_{i-1}\Big/n.
   		\end{equation*}
   	\end{Proposition}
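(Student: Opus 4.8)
The plan is to apply the deterministic estimation error bound for the backward update (Theorem~\ref{thm:upper_deterministic}) to the empirical transition tensor $\widehat{\bfc{P}}^{\rm emp} = \bfc{P} + \bfc{Z}$, where $\bfc{Z} = \widehat{\bfc{P}}^{\rm emp} - \bfc{P}$ is the centered noise, and then control the cost of the final simplex projection. First I would identify the noise model: each entry $\bfc{Z}_{i_1,\dots,i_d}$ is an average of $n$ i.i.d.\ centered Bernoulli random variables, hence bounded in $[-1,1]$ and therefore $O(1/\sqrt{n})$-sub-Gaussian; moreover entries sharing the same past index $(i_1,\dots,i_{d-1})$ are dependent (they sum to $0$ across $i_d$), but entries with distinct past indices are independent. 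Since Theorem~\ref{thm:upper_deterministic} is purely deterministic and only requires the initialization condition \eqref{ineq:initial_condition}, which is assumed in the statement, it yields
\[
\bigl\|\widehat{\bfc{P}}^{(1)} - \bfc{P}\bigr\|_{\F}^2 \le C_d\Bigl(\sum_{k=1}^{d-1} A_k^{(1)} + B^{(1)}\Bigr),
\]
with $A_k^{(1)} = \|(\widehat U^{(0)})_{\mathrm{prod},k}^{(L)\top}[\bfc{Z}]_k((\widehat V^{(1)})_{\mathrm{prod},k+2}^{(R)}\otimes I_{p_{k+1}})\|_{\F}^2$ and $B^{(1)} = \|[\bfc{Z}]_1(\widehat V^{(1)})_{\mathrm{prod},2}^{(R)}\|_{\F}^2$.

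Next I would bound each $A_k^{(1)}$ and $B^{(1)}$ with high probability. Each is the squared Frobenius norm of $[\bfc{Z}]_k$ compressed on the left by an $r_k$-column near-orthonormal matrix and on the right by an $(r_{k-1}p_{k+1})$-column near-orthonormal matrix (so the compressed matrix has at most $r_k\times (r_{k-1}p_{k+1})$ entries, or $p_1\times r_1$ for the $B$ term, etc.). A clean route is a covering/$\varepsilon$-net argument over the (random, but frozen-by-conditioning) projection subspaces — the same machinery used to prove Theorem~\ref{thm:upper_initialization}, which in fact already gives $\max\{A_k^{(t)},B^{(t)}\}\le C_d\sigma^2\sum_i p_i r_i r_{i-1}$ for a $\sigma$-sub-Gaussian noise tensor with \emph{independent} entries. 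The only gap is that here the entries of $\bfc{Z}$ are not fully independent, but the dependence is block-structured (within fixed past index); I would handle this either by noting that sub-Gaussian concentration for $\|\Pi_L [\bfc{Z}]_k \Pi_R\|_{\F}^2$ only needs a bounded-difference / matrix-Bernstein inequality over the $p^{d-1}$ independent past-index blocks, or by a direct reduction that treats each row of $[\bfc{P}^{\rm emp}]_{d-1}$ (a multinomial fraction) as an independent unit. With $\sigma \asymp 1/\sqrt n$ this yields $\|\widehat{\bfc{P}}^{(1)} - \bfc{P}\|_{\F}^2 \le C_d \sum_{i=1}^d p_i r_i r_{i-1}/n$ with probability $1 - Ce^{-cp}$.

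Finally I would pass from $\widehat{\bfc{P}}^{(1)}$ to $\widehat{\bfc{P}}$ via the simplex projection. The simplex is convex, and $\bfc{P}$'s mode-$d$ fibers already lie in $S^{p-1}$, so projection onto $S^{p-1}$ of each mode-$d$ fiber is a Euclidean projection onto a convex set containing the target; such a projection is $1$-Lipschitz and non-expansive toward any point of the set. Hence $\|\widehat{\bfc{P}} - \bfc{P}\|_{\F}^2 \le \|\widehat{\bfc{P}}^{(1)} - \bfc{P}\|_{\F}^2$, which would give the bound \emph{without} the extra $\max_i r_i$ factor. I expect the appearance of $\max_{1\le i\le d-1} r_i$ in the claimed bound comes from a subtlety I should not gloss over: after TTOI the fibers of $[\widehat{\bfc{P}}^{(1)}]_{d-1}$ lie in an estimated $r_{d-1}$-dimensional column space, and bounding the aggregate projection error fiber-by-fiber in a way compatible with that low-rank structure (e.g.\ to preserve the TT-rank, or because the per-fiber error is amplified when re-expressed through the rank-$r_{d-1}$ basis) costs a factor of the rank. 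Pinning down exactly where this factor is necessary — versus an artifact of the proof — is the main obstacle; my fallback is to simply absorb it via the crude bound $\|\widehat{\bfc P}-\bfc P\|_\F^2 \le \mathrm{rank}\cdot(\text{spectral-norm error})^2$ on $[\widehat{\bfc P}^{(1)}]_{d-1}$, combined with the non-expansiveness of the simplex projection applied columnwise, which reproduces the stated rate.
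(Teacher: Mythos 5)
Your overall structure --- apply the deterministic bound of Theorem~\ref{thm:upper_deterministic} to $\widehat{\bfc{P}}^{\rm emp}=\bfc{P}+\bfc{Z}$, control the projected-noise terms, and pass through the simplex projection --- is exactly the paper's. Your simplex step is correct and slightly cleaner: projection onto a convex set is non-expansive toward any point of the set, so $\|\widehat{\bfc{P}}-\bfc{P}\|_\F\le\|\widehat{\bfc{P}}^{(1)}-\bfc{P}\|_\F$; the paper phrases the same fact via triangle inequality and gets a factor $2$, which is immaterial for the rate.

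The real gap is where $\max_{i}r_i$ enters, and your ``fallback'' puts it in the wrong place. The paper does \emph{not} obtain direct Frobenius-norm concentration for $A_k^{(1)}$ and $B^{(1)}$. The block-dependent multinomial noise only yields \emph{bilinear-form} concentration: writing $\bfc{Z}=\tfrac1n\sum_{k}\bfc{Z}^{(k)}$ and using the within-block bound $\sum_{l\in\Omega_i^{(j)}}|([\bfc{Z}^{(k)}]_j)_{m,l}|\le 2$, one shows $x_1^\top[\bfc{Z}]_j x_2$ is $(2/\sqrt n)$-sub-Gaussian for any fixed unit $x_1,x_2$, and an $\varepsilon$-net over the projection factors then controls each projected noise matrix $M_k$ (the matrix inside $A_k^{(1)}$ or $B^{(1)}$) in \emph{spectral} norm by $C\sqrt{\sum_i p_ir_ir_{i-1}/n}$. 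Since each $M_k$ has rank at most $\max_i r_i$, the conversion $\|M_k\|_\F^2\le\rank(M_k)\,\|M_k\|^2$ is applied to each noise term before plugging into \eqref{ineq:upper_deterministic}; that is exactly where the factor appears. Applying $\rank\cdot\|\cdot\|^2$ to $[\widehat{\bfc{P}}^{(1)}-\bfc{P}]_{d-1}$ instead, as your fallback proposes, would require a spectral-norm analogue of Theorem~\ref{thm:upper_deterministic} that the paper neither states nor proves. Your Plan~A (a bounded-difference or matrix-Bernstein argument over the independent past-index blocks to control the Frobenius norms of the $M_k$ directly) is plausible and, if carried through, would in fact remove the $\max_i r_i$ factor and sharpen the result; the paper does not pursue it, so the factor is likely a proof artifact, but as written your proposal leaves this route incomplete and the fallback as stated does not close.
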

   The proof of Proposition \ref{thm:high_order_markov_chain} is provided in Section \ref{sec:proof_high_order_markov_chain}.
   	Compared to the estimation error rate of $\widehat{\bfc{P}}^{\rm emp}$ in \eqref{ineq:P^emp}, Proposition \ref{thm:high_order_markov_chain} shows TTOI achieves significantly reduced estimation error by exploiting the low TT rank structure of the high-order Markov process. 

\begin{rmk}
	If the observations form one transition trajectory $\{X_0,\dots,X_{N}\}$, we can work on the following empirical transition tensor
	\begin{equation}\label{eq:MC_empirical_est}
	\widehat{\bfc{P}}^{\rm{emp}}_{i_1,\dots,i_d}=\begin{cases}
	\frac{\sum_{t=0}^{N-d+1} 1_{\{X_t=i_1,\dots,X_{t+d-1}=i_d\}}}{\sum_{t=0}^{N-d+1} 1_{\{X_t=i_1,\dots,X_{t+d-2}=i_{d-1}\}}},&\sum_{t=1}^{N-d+1} 1_{\{X_t=i_1,\dots,X_{t+d-2}=i_{d-1}\}}>0;\\
	1/p,&\sum_{t=1}^{N-d+1} 1_{\{X_t=i_1,\dots,X_{t+d-2}=i_{d-1}\}}=0.
	\end{cases}
	\end{equation}
	Then $\widehat{\bfc{P}}^{\rm{emp}}$ can be a nearly unbiased and strongly consistent estimator for $\bfc{P}$. When the Markov process is $(d-1)$st order state aggregatable, we can apply TTOI to obtain a better estimate. As will be explored by numerical studies in Section \ref{sec:simulation}, the TTOI estimator achieves favorable performance on the estimation of $\bfc{P}$.
\end{rmk}

\section{Numerical Studies}

In this section, we investigate the numerical performance of TTOI. 

\subsection{Simulation}\label{sec:simulation}

In each simulation setting, we present the numerical results in both average estimation error (denoted by dots) and standard deviation (denoted by bars) based on 100 repetitions. We assume the true TT-ranks are known in the first three settings. Afterwards, we introduce a BIC-type data-driven scheme for TT-rank selection and present its numerical performance. All experiments are conducted by a quad-core 2.3 GHz Intel Core i5  processor.

We first consider the tensor-train spiked tensor model \eqref{eq:obs_model} discussed in Section \ref{tensor-spiked-model}. Specifically, we randomly generate  $G_1,\bfc{G}_2,\dots,\bfc{G}_{d-1}, G_d$ with i.i.d. standard normal entries, and generate $\bfc{Z}$ with i.i.d. $\mathcal{N}(0,\sigma^2)$ or $\text{Unif}(-b,b)$ entries. Let $p_1=\dots=p_d=p$, $r_1=\dots=r_{d-1}=r$, and consider four settings: (1) $p = 100, d = 3, r = 1$; (2) $p = 50, d = 4, r = 1$; (3) $p = 20, d = 5, r = 1$; (4) $p = 20, d = 5, r = 2$. For varying values of $\sigma \in[1, 19]$ and $b \in [3, 30]$, we evaluate the estimation error $\left\|\widehat{\bfc{X}}^{(t)} - \bfc{X}\right\|_{\F}$ of the TT-SVD and TTOI estimators with 1 or 2 iterations, i.e., $t_{\max}=0,1,2$. From the results summarized in Figure \ref{fig:gauss_est_err_r1} (normal noise) and Figure \ref{fig:unif_est_err_r1} (uniform noise), we can see TTOI, even with one iteration, performs significantly better than TT-SVD, and the advantage becomes more significant as the noise level $\sigma, b$ grows. This suggests that the proposed TTOI is effective for high-order tensor SVD compared to the classic TT-SVD, especially when the observations are corrupted by substantial noise. Table~\ref{tab: comp_time} summarizes the runtime of TT-SVD and TTOI, which suggests that the additional computational cost incurred by the backward and forward updates in TTOI is negligible compared to the runtime of the original TT-SVD. 
\begin{figure}[ht!]
	\centering
	\begin{minipage}[t]{\linewidth}
		\centering
		\subfigure{
			\includegraphics[width=0.47\linewidth,height=4.8cm]{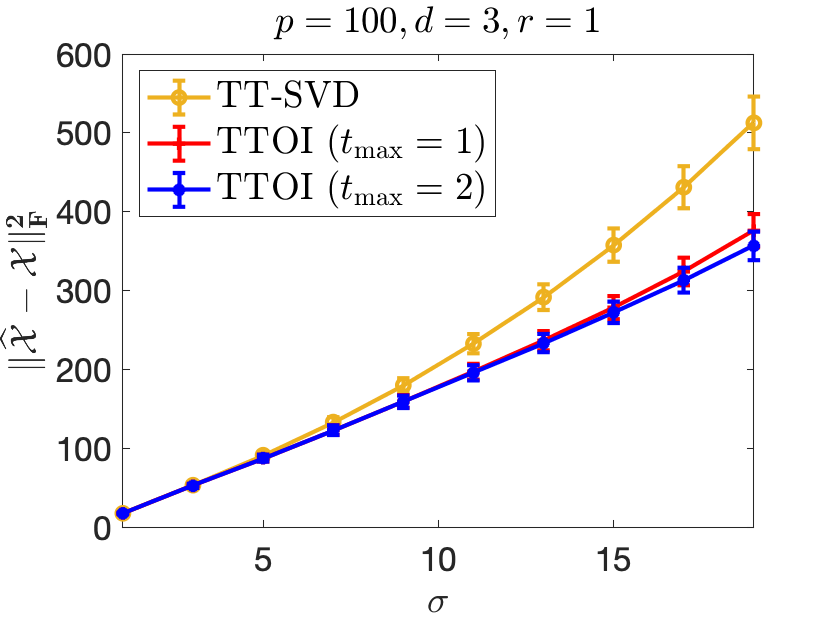}}
		\subfigure{
			\includegraphics[width=0.47\linewidth,height=4.8cm]{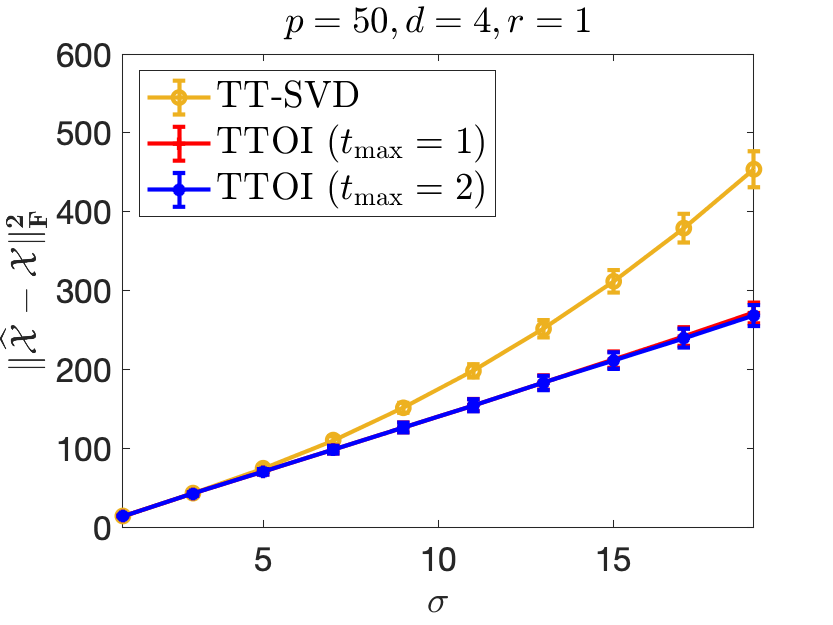}}
		\subfigure{
			\includegraphics[width=0.47\linewidth,height=4.8cm]{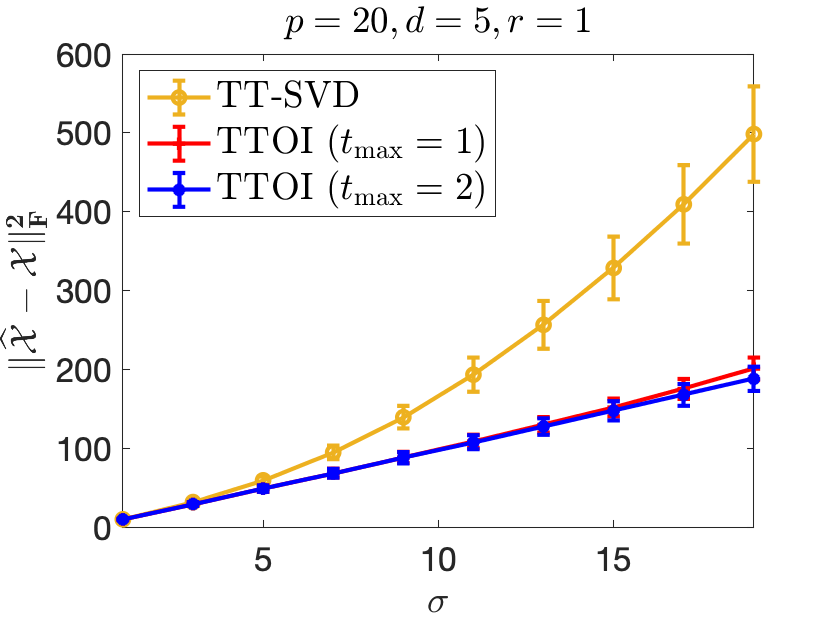}}
		\subfigure{
			\includegraphics[width=0.47\linewidth,height=4.8cm]{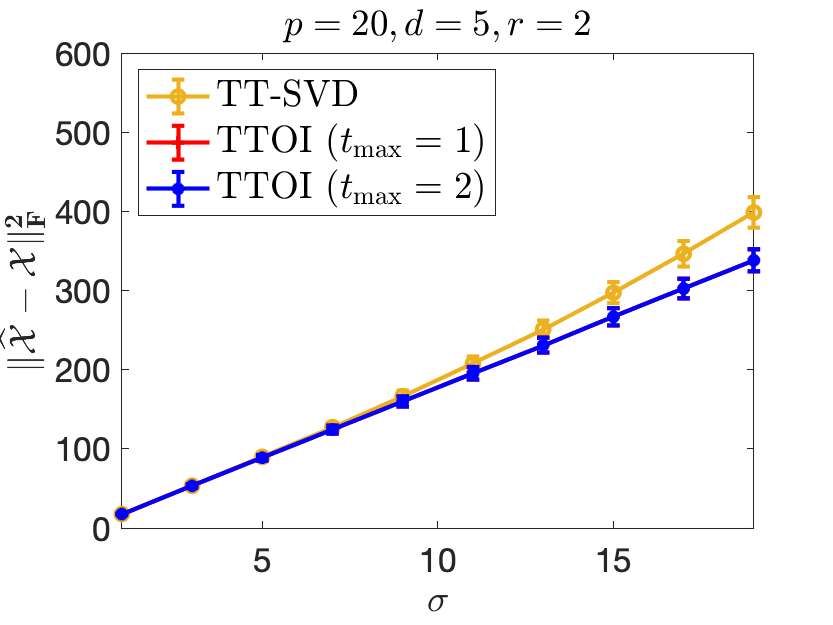}}
	\end{minipage}
\vskip-.5cm
	\caption{Estimation error of TT-SVD and TTOI for high-order spiked tensor model. Here, $\bfc{Z} \overset{\text{i.i.d.}}{\sim}N(0, \sigma^2)$.}\label{fig:gauss_est_err_r1}

	\centering
	\vskip.4cm
	\begin{minipage}[t]{\linewidth}
		\centering
		\subfigure{
			\includegraphics[width=0.47\linewidth,height=4.8cm]{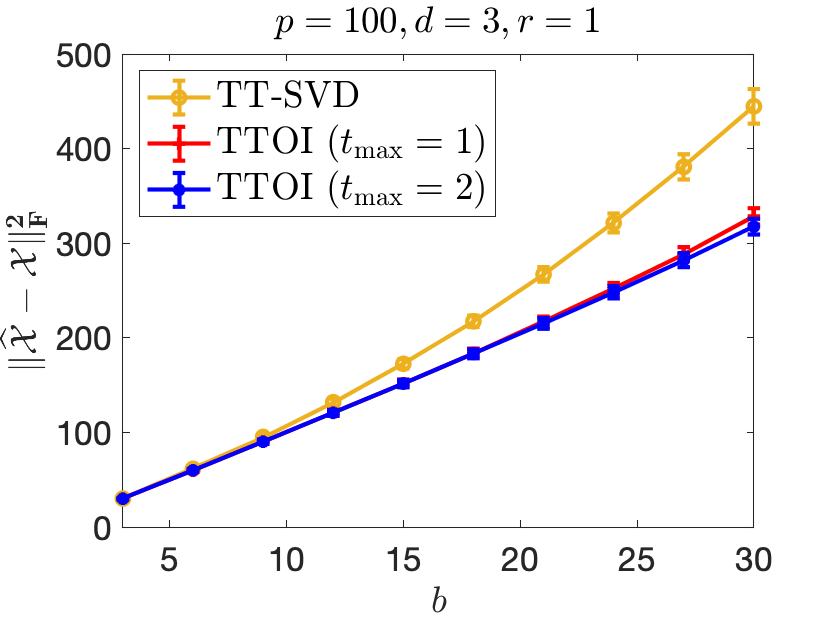}}
		\subfigure{
			\includegraphics[width=0.47\linewidth,height=4.8cm]{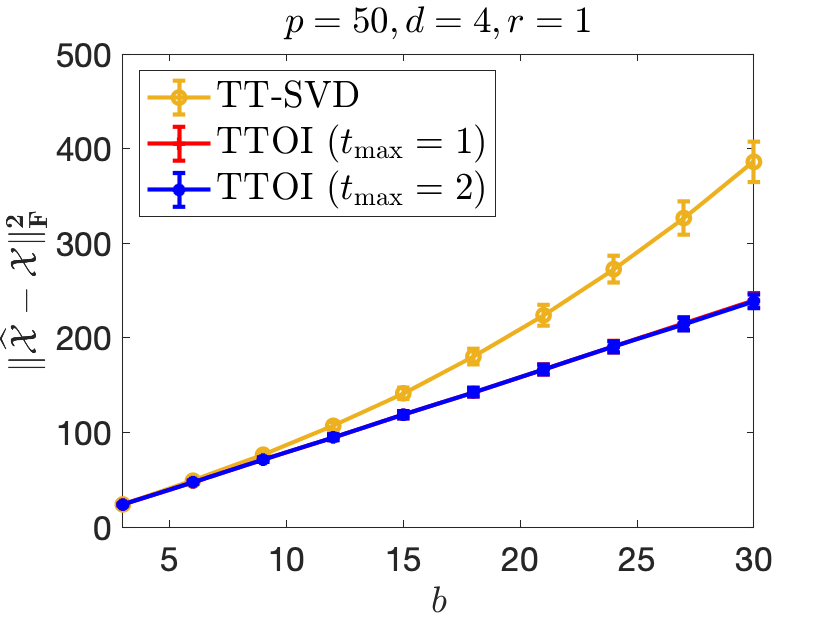}}
		\subfigure{
			\includegraphics[width=0.47\linewidth,height=4.8cm]{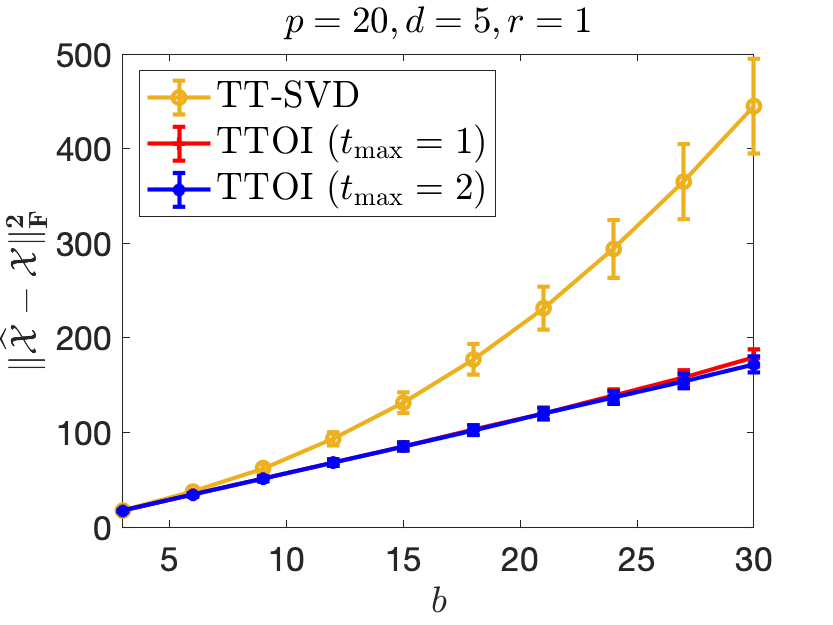}}
		\subfigure{
			\includegraphics[width=0.47\linewidth,height=4.8cm]{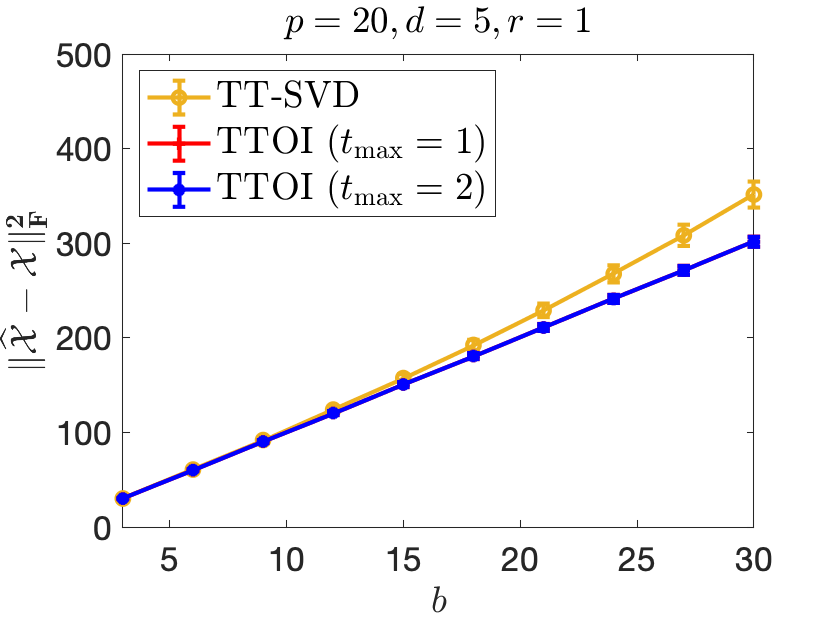}}
	\end{minipage}
\vskip-.5cm
	\caption{Estimation error of TT-SVD and TTOI for high-order spiked tensor model. Here, $\bfc{Z} \overset{\text{i.i.d.}}{\sim}{\rm Unif}(-b,b)$.}\label{fig:unif_est_err_r1}
\end{figure}

To understand the influence of TT-rank to the performance of the TT-SVD and TTOI estimators, we conduct numerical experiments under the spiked tensor model~\eqref{eq:obs_model} with $r_1=\cdots=r_{d-1}=r$ for various values of $r$. In particular, $G_1, \bfc{G}_2, \dots, \bfc{G}_{d-1}, G_d$ are still generated with i.i.d. standard normal entries, and $\bfc{Z}$ has i.i.d. $\mathcal{N}(0,\sigma^2)$ entries. Letting $p_1=\dots=p_d=p$, we consider two settings: (1) $p = 100, d = 3, \sigma=20$; (2) $p = 500, d = 3,\sigma=100$. For $r=1,\dots,10$, we evaluate the average estimation error $\left\|\widehat{\bfc{X}}^{(t)} - \bfc{X}\right\|_{\F}$ of TT-SVD, TTOI with 1 iteration, and TTOI with 2 iterations (i.e., $t_{\max}=0,1,2$), and present the results in Figure~\ref{fig:vary_rank}. Figure~\ref{fig:vary_rank} suggests that the estimation errors increase as the rank increases, while TTOI with 1 or 2 iterations both performs better than TT-SVD. The improvement of TTOI over TT-SVD is more significant under larger $p$ or smaller $r$. An intuitive explanation for this phenomenon is as follows: the key idea of TTOI is to utilize the previous updates to reduce the dimension of the sequential unfolding $[\bfc{Y}]_k$ before performing singular value thresholding; such the dimension reduction is more significant for large $p$ or small $r$.
\begin{figure}[ht!]
	\centering
\begin{minipage}[t]{\linewidth}
	\centering
	\subfigure{
		\includegraphics[width=0.48\linewidth,height=5cm]{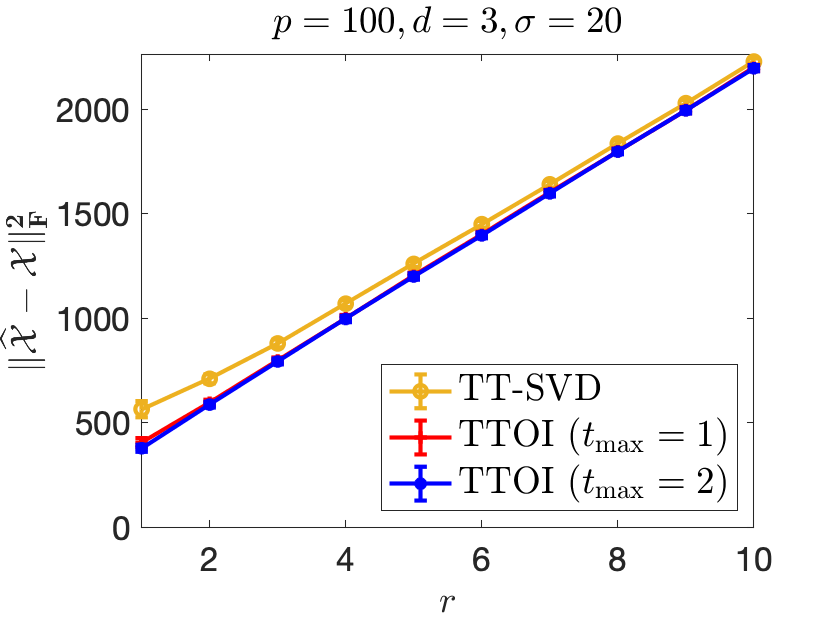}}
	\subfigure{
		\includegraphics[width=0.48\linewidth,height=5cm]{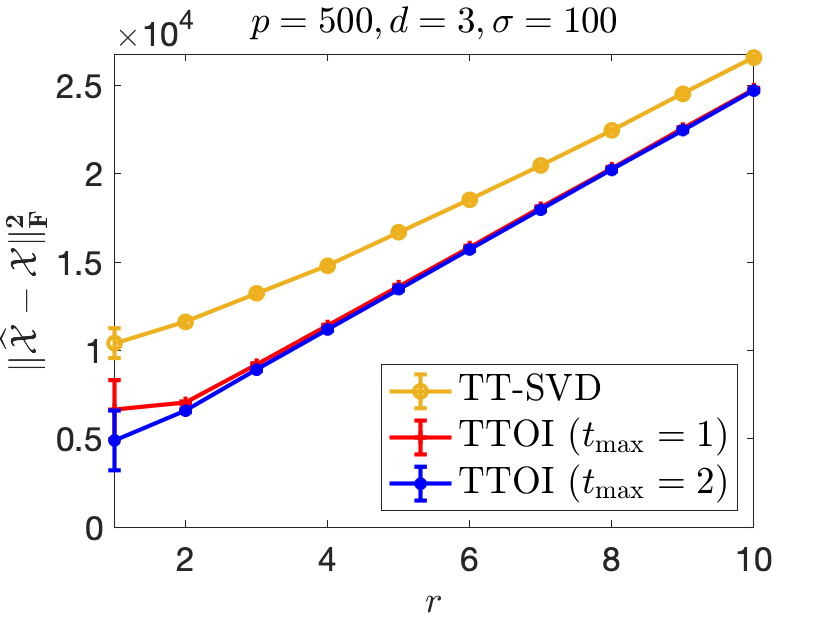}}
\end{minipage}
\caption{Estimation error of TT-SVD and TTOI for high-order spiked tensor model with varying TT-ranks}\label{fig:vary_rank}	
\end{figure}
\begin{table}[h]
	\centering
	\begin{tabular}{c|c|c|c}
		\hline
		Setting&TT-SVD&TTOI ($t_{\max}=1$)& TTOI ($t_{\max}=2$)\\
		\hline
		$p = 100$, $d = 3$, $r=1$&0.3316 (0.0708)&0.3338 (0.0710)&0.3402 (0.0744)\\
		\hline
		$p = 50$, $d = 4$, $r=1$&1.1652 (0.1728)&1.1690 (0.1720)&1.2012 (0.1709)\\
		\hline
		$p = 20$, $d = 5$, $r=1$&0.7254 (0.0926)&0.7300 (0.0919)&0.7514 (0.0949)\\
		\hline 
		$p = 20$, $d = 5$, $r=2$&0.6718 (0.1002)&0.6764 (0.1005)&0.7080 (0.1032)\\
		\hline
	\end{tabular}
\caption{Runtime (in seconds) of TT-SVD, TTOI with $1$ iteration, and TTOI with $2$ iterations under the high-order spiked tensor model with $\bfc{Z}\overset{\text{i.i.d.}}{\sim}N(0, 400)$. The mean runtime of $50$ independent replicates are presented and the standard deviations are listed in parentheses.}\label{tab: comp_time}
\end{table}

Next, we demonstrate the performance of TTOI on transition tensor estimation for the high-order state-aggregatable Markov chains studied in Section \ref{sec:example}. We consider the $(d-1)$st order Markov chain on $p$ states. To generate the transition tensor $\bfc{P}$, we first draw $\widetilde{G}_1\in \bbR^{p\times r}, \widetilde{\bfc{G}}_2\in \bbR^{r\times p\times r},\dots, \widetilde{G}_d\in \bbR^{r\times p}$ with i.i.d. standard normal entries, then normalize the rows of $\widetilde{G}_1, \widetilde{\bfc{G}}_2,\dots, \widetilde{G}_d$ in absolute values as 
$$G_{1, [i, j]} = \frac{|\widetilde{G}_{1, [i, j]}|}{\sum_{j'} |\widetilde{G}_{1, [i, j']}|}, \quad \bfc{G}_{k, [i_1, i_2, j]} = \frac{|\widetilde{\bfc{G}}_{k, [i_1, i_2, j]}|}{\sum_{j'}|\widetilde{\bfc{G}}_{k, [i_1, i_2, j']}|}, \quad G_{d, [i, j]} = \frac{|\widetilde{G}_{d, [i, j]}|}{\sum_{j'} |\widetilde{G}_{d, [i, j']}|}.$$
By this means, $\bfc{P} = \llbracket G_1, \bfc{G}_2,\ldots, \bfc{G}_{d-1}, G_d\rrbracket$ satisfies $\bfc{P}_{i_1,\ldots, i_d}\geq 0$, $\sum_{i_d=1}^p \bfc{P}_{i_1,\ldots, i_d} = 1$ for any $(i_1,\ldots, i_{d-1})$, so $\bfc{P}$ forms a Markov transition tensor. To generate the trajectory $\{X_1,\dots,X_{N}\}$, we generate the initial $d-1$ states $X_1,\dots,X_{d-1}$ i.i.d. uniformly from $[p]$, then generate $X_d,\dots, X_{N}$ sequentially according to \eqref{eq:transition-tensor}. To estimate $\bfc{P}$, we construct the empirical probability tensor $\widehat{\bfc{P}}^{\mathrm{emp}}$ by \eqref{eq:MC_empirical_est}, then apply TT-SVD and TTOI with input $\widehat{\bfc{P}}^{\mathrm{emp}}$ as detailed in Section \ref{sec:example} to obtain  $\widehat{\bfc{P}}$. We consider two numerical settings: (1) $p=100, d=3, r=1$; (2) $p=50, d=4, r=1$. We evaluate the estimation error $\|\widehat{\bfc{P}}^{(i)}-\bfc{P}\|_{\F}$ for each setting and summarize the results to Figure \ref{fig:MC_simulation}. Again, TTOI exhibits clear advantage over the existing methods in all simulation settings. 
\begin{figure}[ht!]
	\centering
	\begin{minipage}[t]{\linewidth}
		\centering
		\subfigure{
		\includegraphics[width=0.48\linewidth,height=5cm]{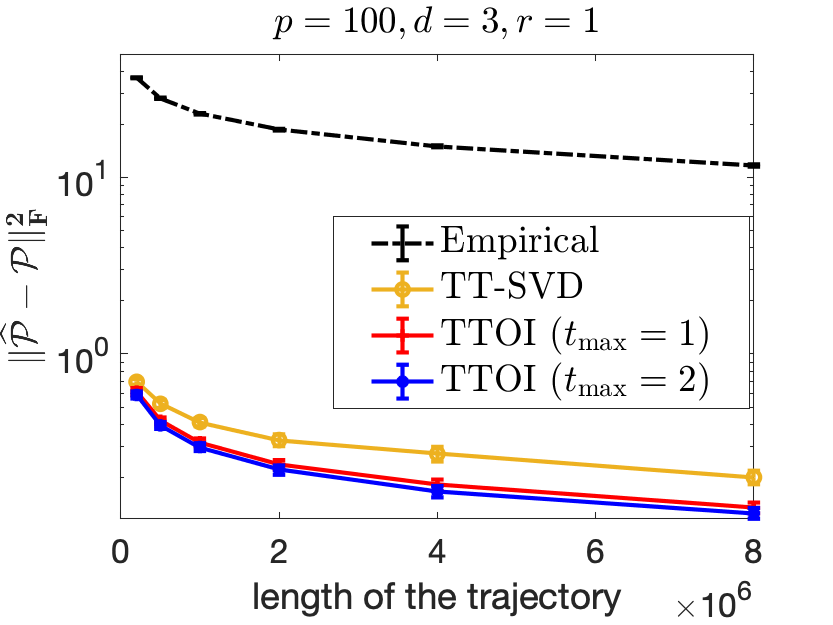}}
		\subfigure{
		\includegraphics[width=0.48\linewidth,height=5cm]{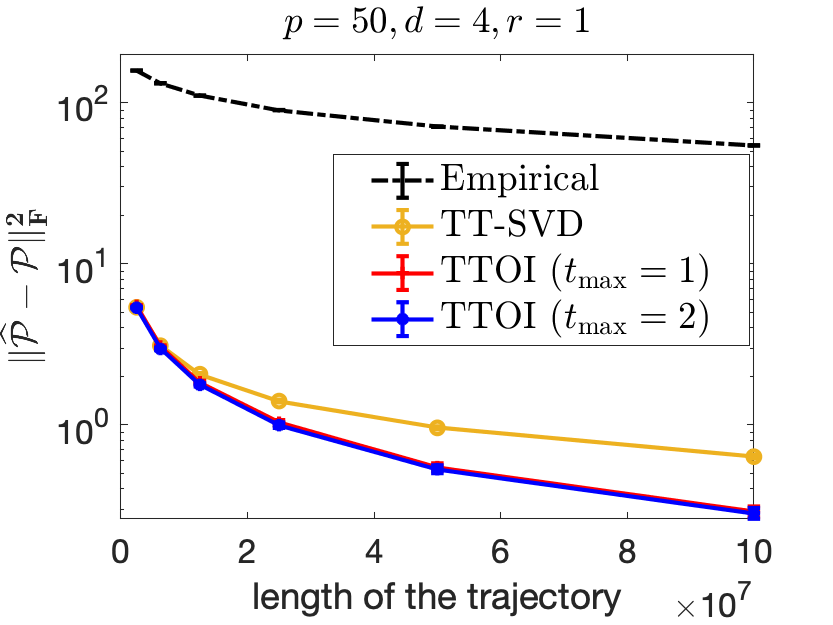}}
	\end{minipage}
	\caption{Estimation error of the transition tensor versus length of the observable trajectory in high order state-aggregatable Markov chain estimation. 
	}\label{fig:MC_simulation}
\end{figure}

\vskip.2cm

\noindent{\bf Selection of TT-ranks.} The proposed TTOI algorithm requires specifying TT-ranks $r_1, \ldots, r_{d-1}$ as inputs and the appropriate choices of $r_1, \ldots, r_{d-1}$ are crucial in practice. We propose a data-driven scheme to select the TT-ranks: we choose $r_1,\dots,r_{d-1}\geq 1$ such that the following Bayesian information criterion (BIC) under the spiked tensor model is minimized:
\begin{equation}\label{eq:BIC}
	\begin{split}
	\mathrm{BIC}(r_1,\ldots, r_{d-1}):= & \prod_{k=1}^dp_k\log\|\bfc{Y}-\widehat{\bfc{X}}(r_1,\dots,r_{d-1})\|_F^2\\
	& +\left(p_1r_1 + \sum_{k=2}^{d-1} p_kr_{k-1}r_k + p_dr_{d-1}\right)\left(\sum_{k=1}^d \log p_k\right).
	\end{split}
\end{equation}
Here, $\widehat{\bfc{X}}(r_1,\dots,r_{d-1})$ is the output of TTOI (Algorithm \ref{algorithm:iteration}) with the input TT-ranks $r_1,\dots,r_{d-1}$. This BIC-type criterion was also adopted in prior works on tensor clustering~\citep{han2020exact}. 

Then we conduct numerical experiments under the same setting as the bottom two plots in Figure~\ref{fig:gauss_est_err_r1} on the spiked tensor model with Gaussian noise. Figure~\ref{fig:tuning_rank} summarizes the estimation errors of TT-SVD and TTOI with $1$ and $2$ iterations, respectively, with the ranks selected based on the proposed BIC criterion \eqref{eq:BIC}. Comparing Figure \ref{fig:tuning_rank} to the bottom two plots in Figure \ref{fig:gauss_est_err_r1}, we can see the proposed criterion can select the true ranks accurately and the performance of both TT-SVD and TTOI with tuned ranks is very similar to the one by inputting the true ranks.
\begin{figure}[ht!]
	\centering
	\begin{minipage}[t]{\linewidth}
		\centering
		\subfigure{
			\includegraphics[width=0.48\linewidth,height=5cm]{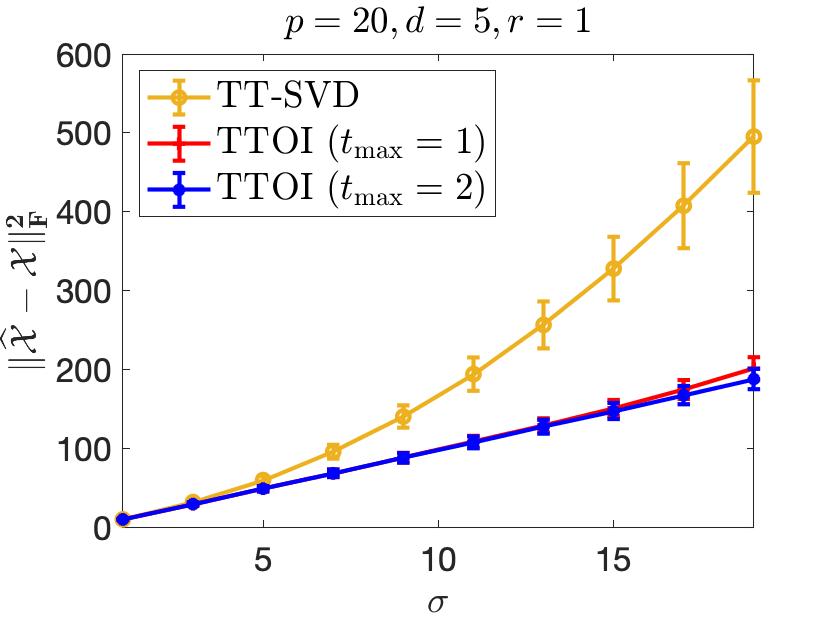}}
		\subfigure{
			\includegraphics[width=0.48\linewidth,height=5cm]{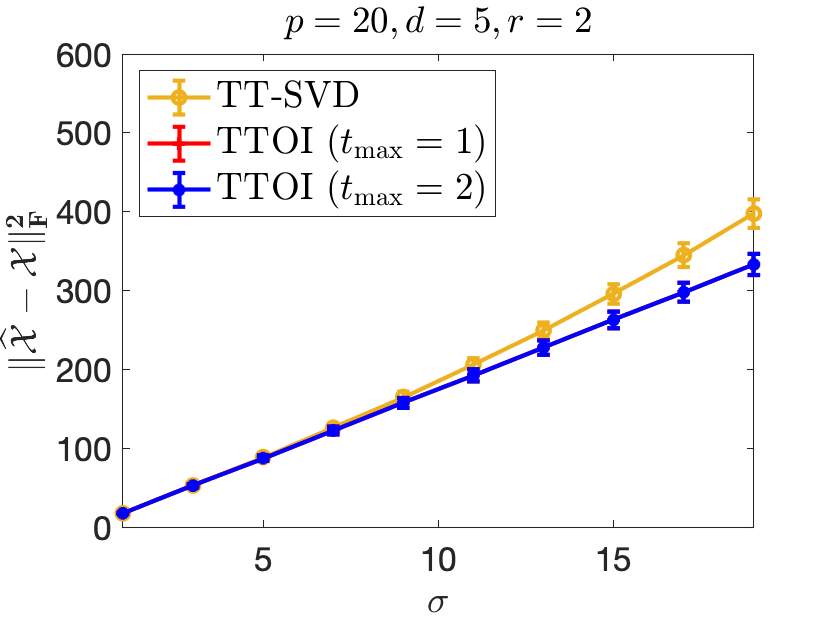}}
	\end{minipage}
	\caption{Average estimation error of TT-SVD and TTOI for high-order spiked tensor model with BIC-tuned ranks. 
	}\label{fig:tuning_rank}
\end{figure}

\subsection{Real Data Experiments}\label{sec:real-data}

We apply the proposed method to investigate the Manhattan taxi data\footnote{2013 Trip Data, available at  \url{https://chriswhong.com/open-data/foil_nyc_taxi/}}. This dataset contains the New York City taxi trip records from 14,144 drivers in 2013. We treat each travel record as a transition among different locations at New York City, then the overall dataset can be organized as a collection of fragmented sample trajectories of a Markov chain on New York City traffic. 
Some recent analysis on such data can be seen at, e.g., \cite{benson2017spacey,liu2012urban,zhang2019spectral}. 

Due to the high-dimensional spatiotemporal nature of the dataset, a sufficient dimension reduction or state aggregation is often a crucial first step to study a metropolitan-wide traffic pattern. To this end, we apply the high-order Markov model as described in Section \ref{sec:example}. Specifically, we discretize the Manhattan region into a grid of $p=119$ states that forms a state space. Then, we collect all travel records in Manhattan of each driver from the dataset, sort them by time, and form into Markovian transition trajectories. In particular, each travel record is treated as a transition from the pickup to the drop-off location. If the drop-off location $i$ of the previous trip is different from the pickup location $j$ of the next trip by the same driver, we also form a transition from states $i$ to $j$. 
Based on the trajectories, we can construct a high-order Markov chain with an order $d$ empirical transition probability tensor $\widehat{\bfc{P}}^{\mathrm{emp}}\in\bbR^{\otimes_{k=1}^d p}$ as described in Section \ref{sec:example}. Assuming the true probability tensor is state aggregatable (Definition \ref{def:rank-reduced MC}), we apply one-step TTOI  proposed in Section \ref{sec:example} and obtain $\widehat{\bfc{P}}$. It is noteworthy if $d=2$, the described procedure of $\widehat{\bfc{P}}$ is equivalent to the classic matrix spectral decomposition in the literature. Figure \ref{fig:singular_values} plots the singular values of the sequential unfolding matrices of $\widehat{\bfc{P}}^{\rm{emp}}$ for $d=3$, which clearly demonstrates the low-TT-rankness of the probability transition tensor $\bfc{P}$. In the following experiments, we focus on the order-2 Markov model and analyze all consecutive two transitions: $i\to j \to k$, corresponding to the $d=3$ case.
\begin{figure}[ht!]
	\centering
		\includegraphics[width=0.45\linewidth,height=4cm]{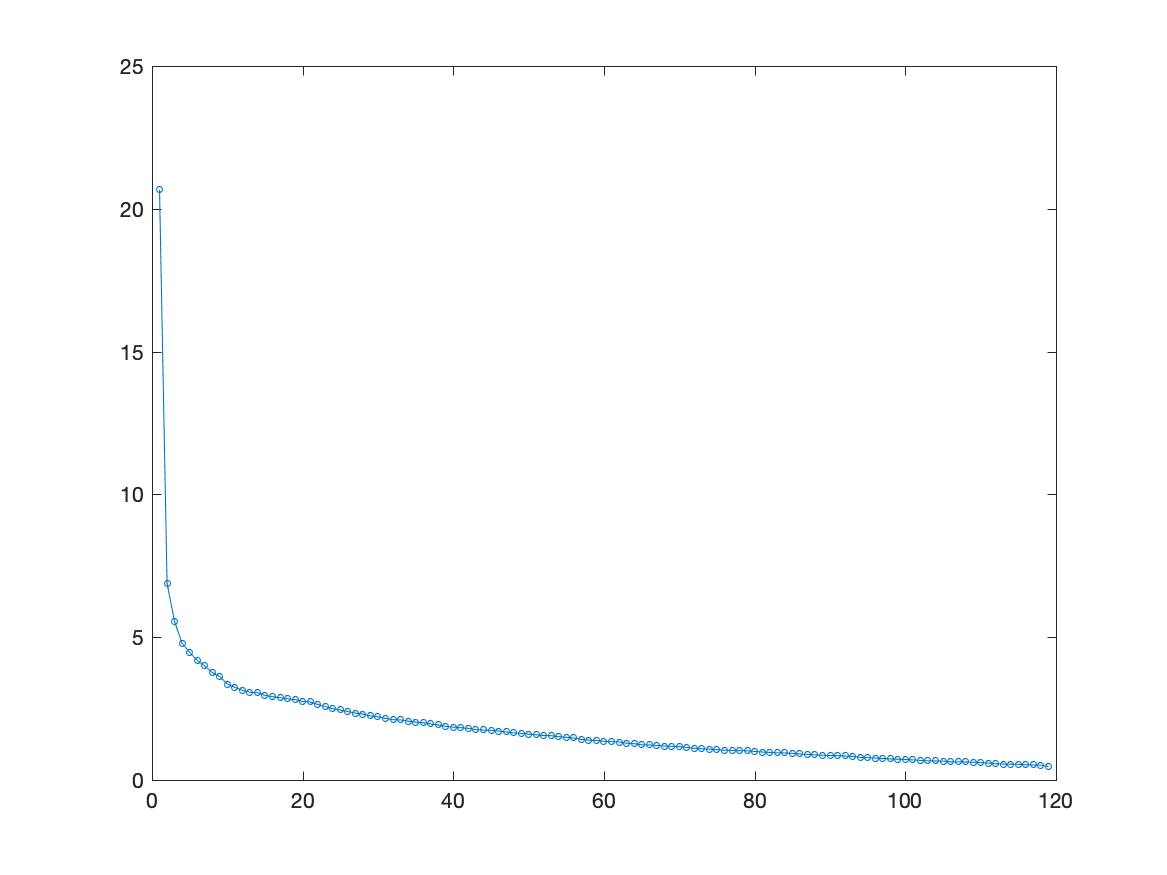}
		\includegraphics[width=0.45\linewidth,height=4cm]{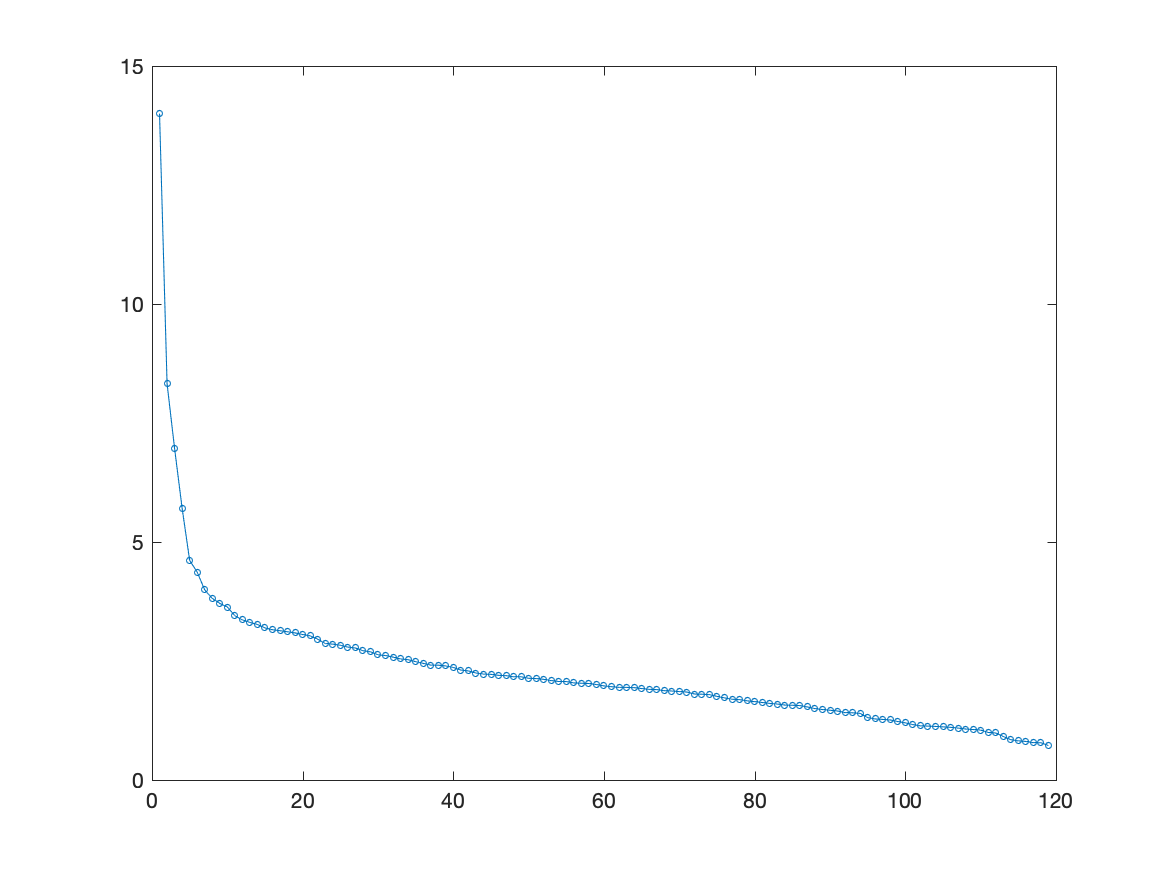}
	\caption{Singular values of sequential unfolding matrices $[\widehat{\bfc{P}}^{\rm{emp}}]_1$ (left panel) and $[\widehat{\bfc{P}}^{\rm{emp}}]_2$ (right panel)}
	\label{fig:singular_values}
\end{figure}

Inspired by the classic methods of matrix spectral decomposition, we aggregate all location states in Manhattan into a few clusters via both $\widehat{\bfc{P}}$ and $\widehat{\bfc{P}}^{\rm emp}$. Specifically, we calculate $\widehat{G}_d^{\top}$, i.e., the last TT-core of $\widehat{\bfc{P}}$, and $[\widehat{\bfc{P}}^{\mathrm{emp}}]_{d-1}$, i.e., the matricization of $\widehat{\bfc{P}}^{\mathrm{emp}}$ whose columns correspond to the last mode. Then we perform $k$-means on all columns of $\widehat{G}_d^{\top}$ and $[\widehat{\bfc{P}}^{\mathrm{emp}}]_{d-1}$, record the cluster index, associate the index to each location state, and plot the results in Figure \ref{fig:cluster_Gd} (Panels (a)(b) are for TTOI and Panels (c)(d) are for empirical estimate). From Figure \ref{fig:cluster_Gd} (a)(b), we can clearly identify four regions: (i) lower Manhattan (orange), (ii) midtown (dark blue),  (iii) upper west side (green), and (iv) upper east side (brown or  black). In contrast, direct clustering on $\bfc{P}^{\rm emp}$ yields less interpretable results as the majority points go to one cluster. It is also worth noting even the location information is not provided to this experiment, the resulting clusters in Figures \ref{fig:cluster_Gd} (a)(b) show good spatial proximity between locations. This illustrates the effectiveness of TTOI in dimension reduction and state-aggregation for high-order Markov processes. 

\begin{figure}[ht!]
	\centering
	\subfigure[$\widehat{G}_d$, $r=6,k=6$]{
		\includegraphics[width=0.23\linewidth,height=7cm]{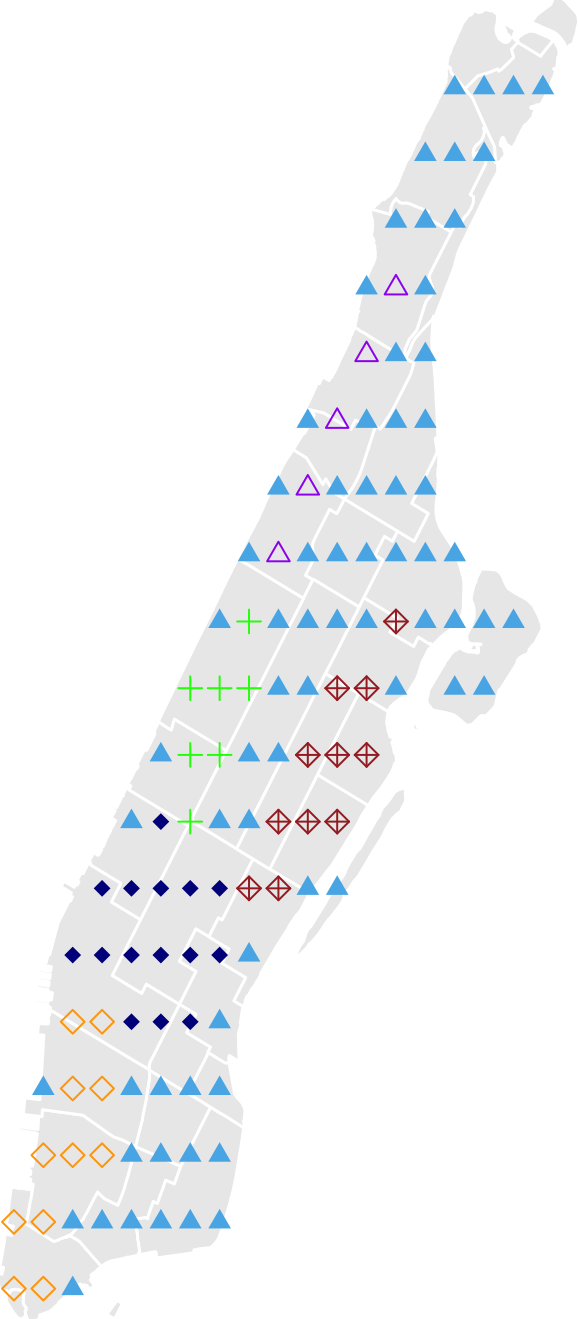}}
	\subfigure[$\widehat{G}_d$, $r=7,k=7$]{
		\includegraphics[width=0.23\linewidth,height=7cm]{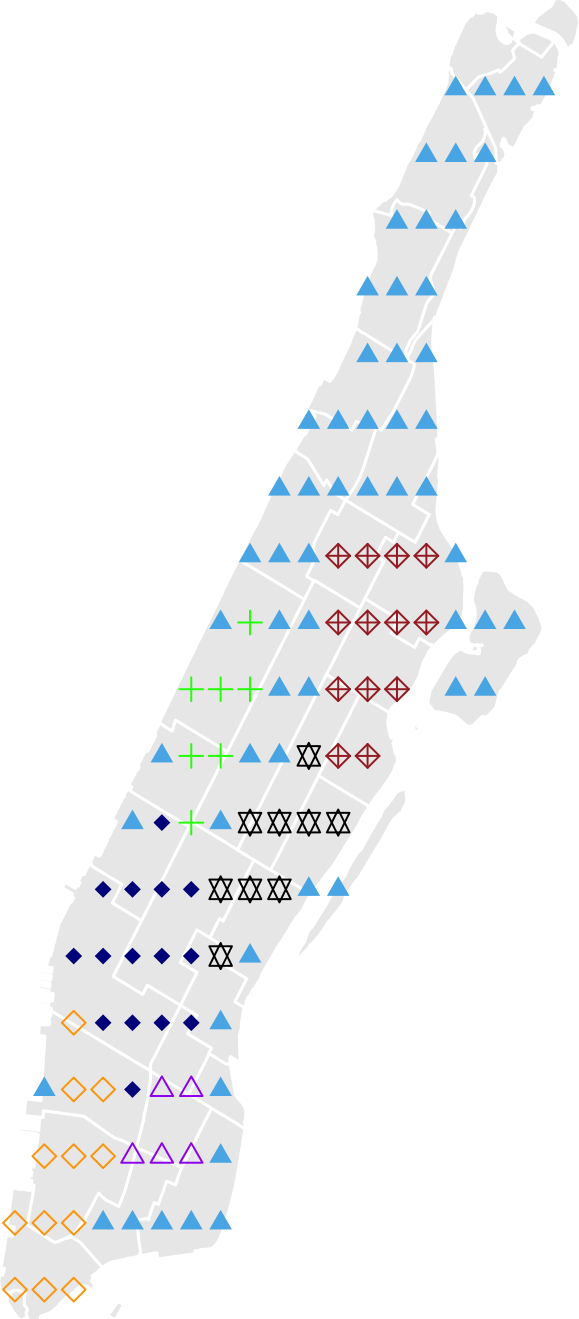}}
	\subfigure[{$[\widehat{\bfc{P}}^{\rm{emp}}]_{d-1}$, $k=6$}]{
		\includegraphics[width=0.23\linewidth,height=7cm]{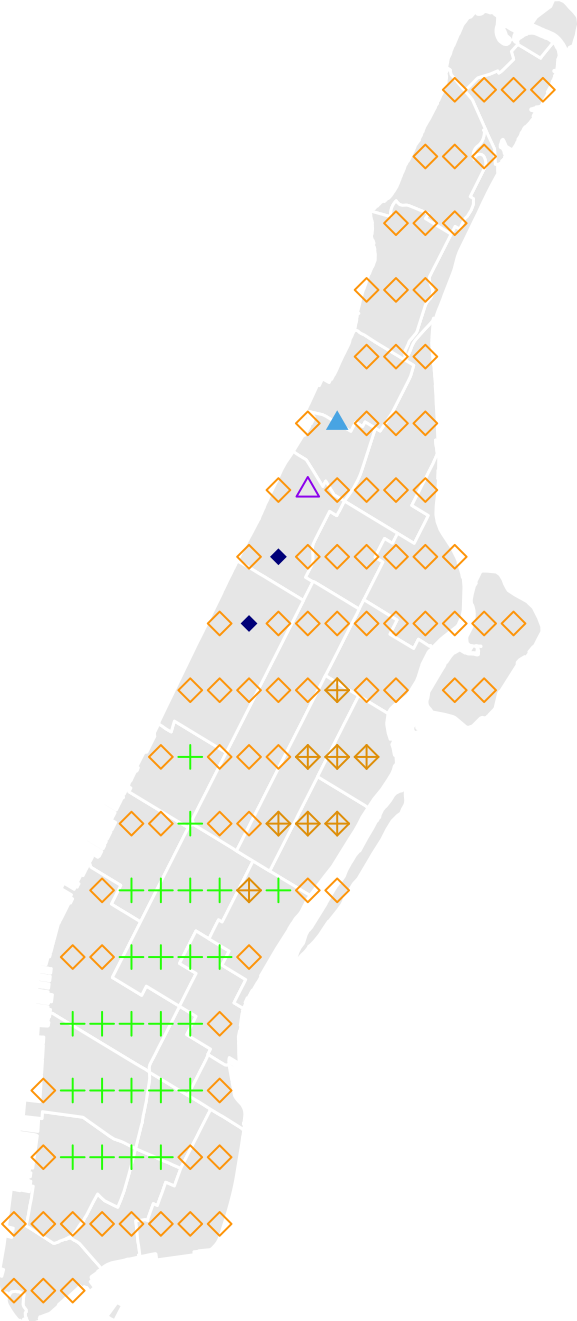}}
	\subfigure[{$[\widehat{\bfc{P}}^{\rm{emp}}]_{d-1}$, $k=7$}]{
		\includegraphics[width=0.23\linewidth,height=7cm]{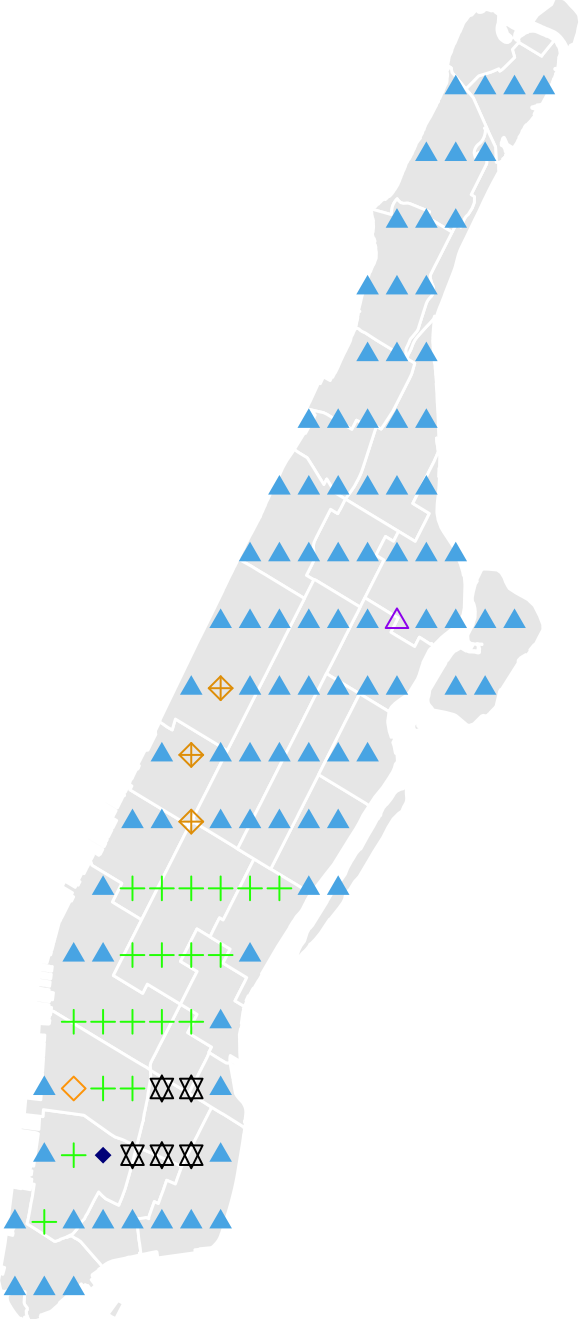}}
	\caption{State aggregation based on TTOI and empirical estimate}\label{fig:cluster_Gd}
\end{figure}

Next, we illustrate the high-order nature of the city-wide taxi trip through the following experiment. For each initial state $i \in [p]$, we apply $k$-means to cluster the column span of $\widehat{\bfc{P}}_{[i, :, :]}$, where $\widehat{\bfc{P}}$ is the outcome of TTOI. We present the results in Figure \ref{fig:cluster_order3_slice2}, where the red triangles denote the given first state $i$ and $r=k=7$. If the city-wide taxi trips do not have significant high-order effects, $\widehat{\bfc{P}}$ should be reducible to a first order Markov process and $\widehat{\bfc{P}}_{[i,:,:]}$ should have similar values for different $i$. However, as we can see from Figure \ref{fig:cluster_order3_slice2} that the clustering results highly depends on the first state $i$, the high-order effects exist in the city-wide taxi trip Markov process. In addition, the states in different directions of $i$ are often clustered to different regions, which shows that the taxi drivers may tend to move to the same direction in consecutive trips, which yields the high-order effects in the driving trajectories. 

\begin{figure}[ht!]
		\centering
		\subfigure{
			\includegraphics[width=0.16\linewidth,height=5cm]{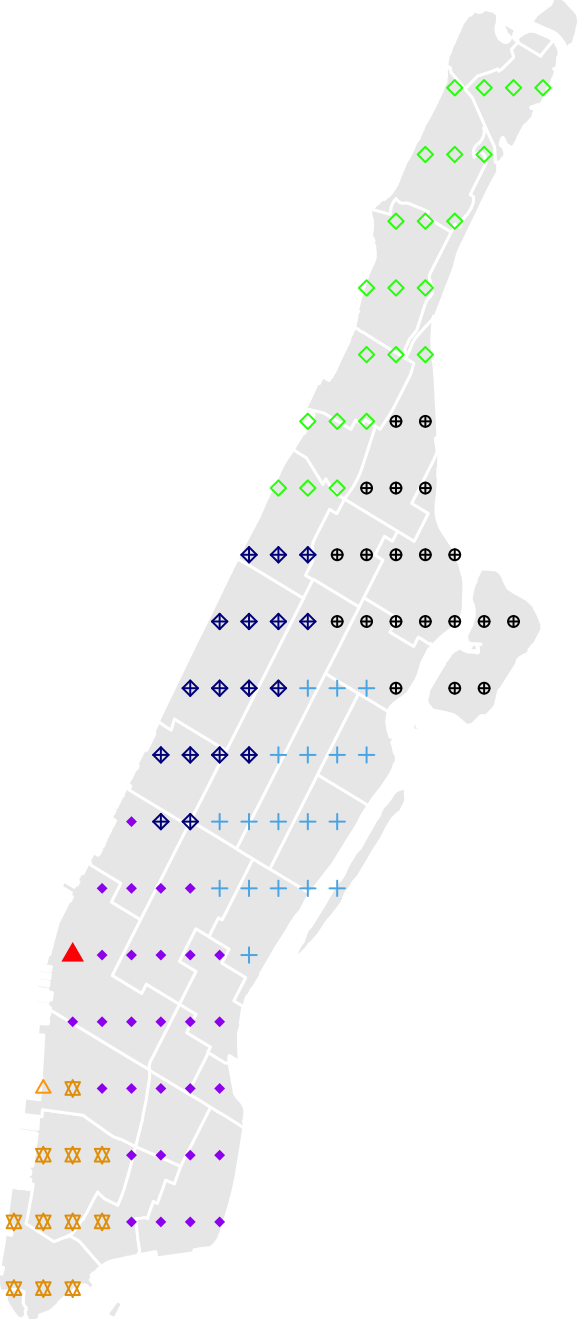}}\hskip-.1cm
		\subfigure{
			\includegraphics[width=0.16\linewidth,height=5cm]{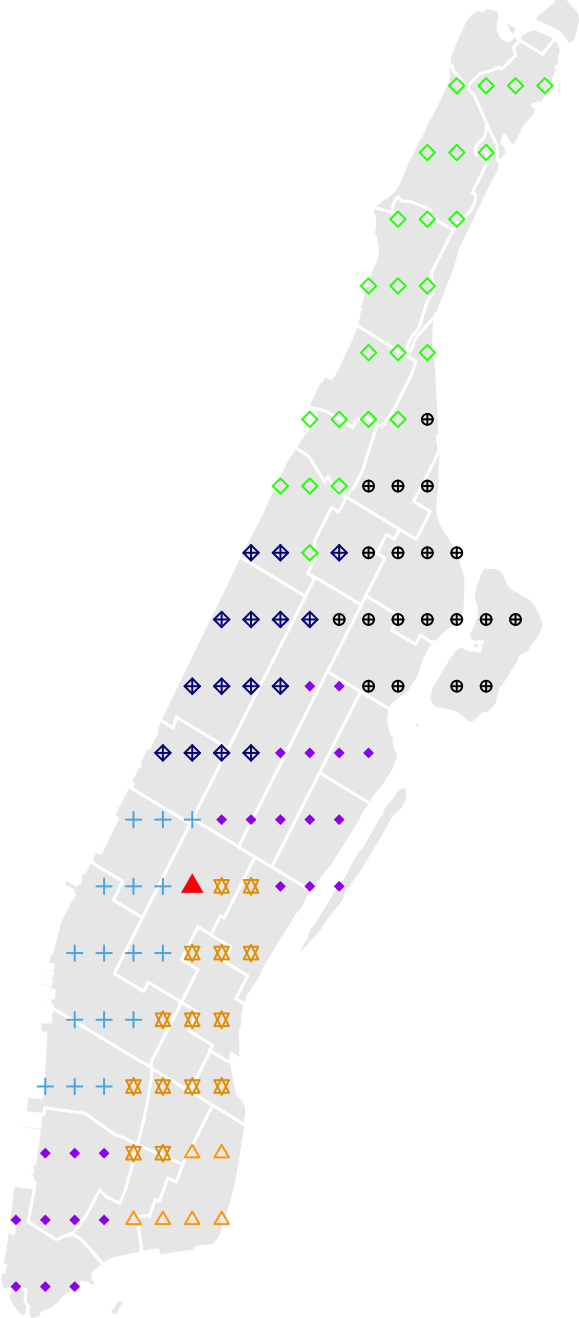}}\hskip-.1cm
		\subfigure{
			\includegraphics[width=0.16\linewidth,height=5cm]{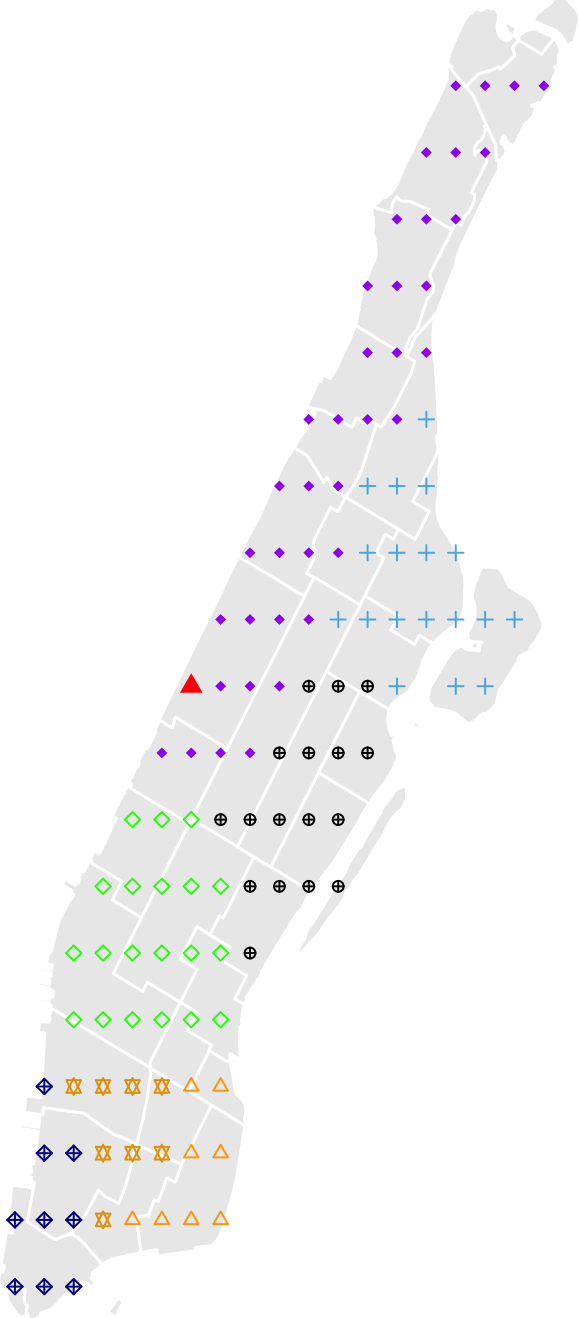}}\hskip-.1cm
		\subfigure{
			\includegraphics[width=0.16\linewidth,height=5cm]{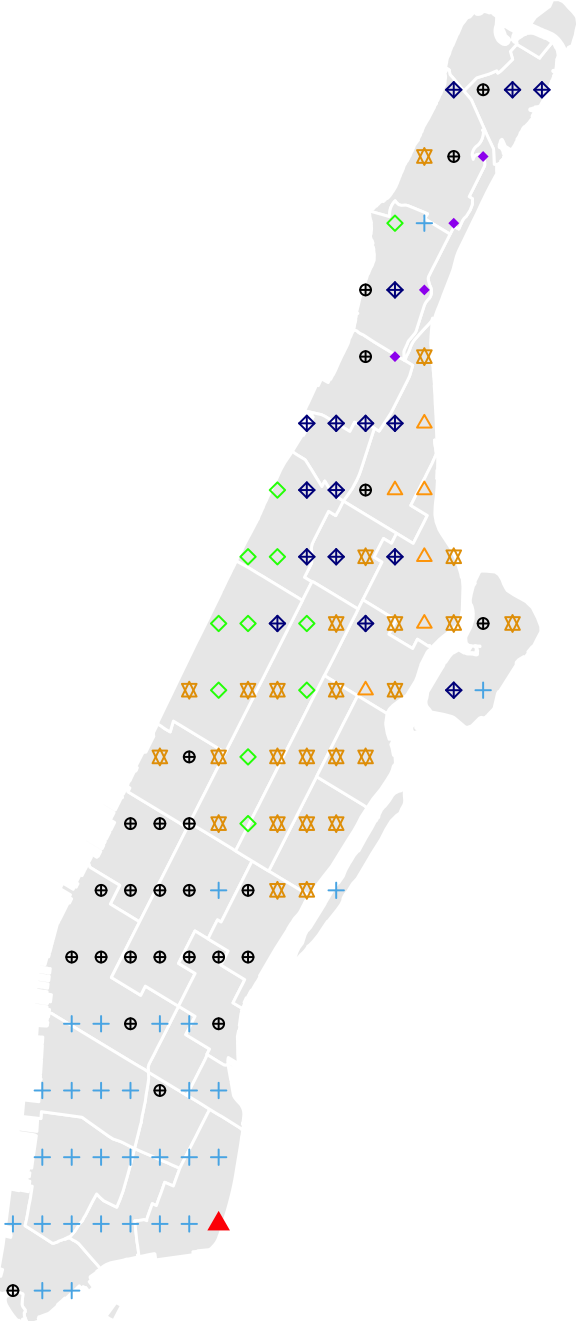}}\hskip-.1cm
		\subfigure{
			\includegraphics[width=0.16\linewidth,height=5cm]{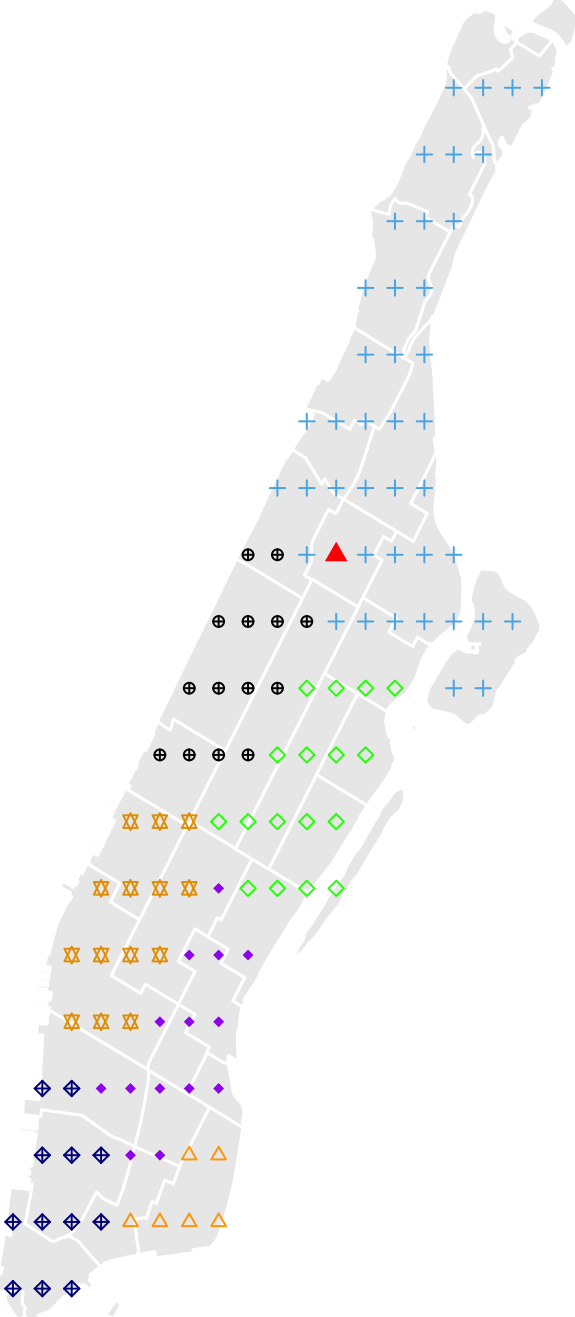}}\hskip-.1cm
		\subfigure{
			\includegraphics[width=0.16\linewidth,height=5cm]{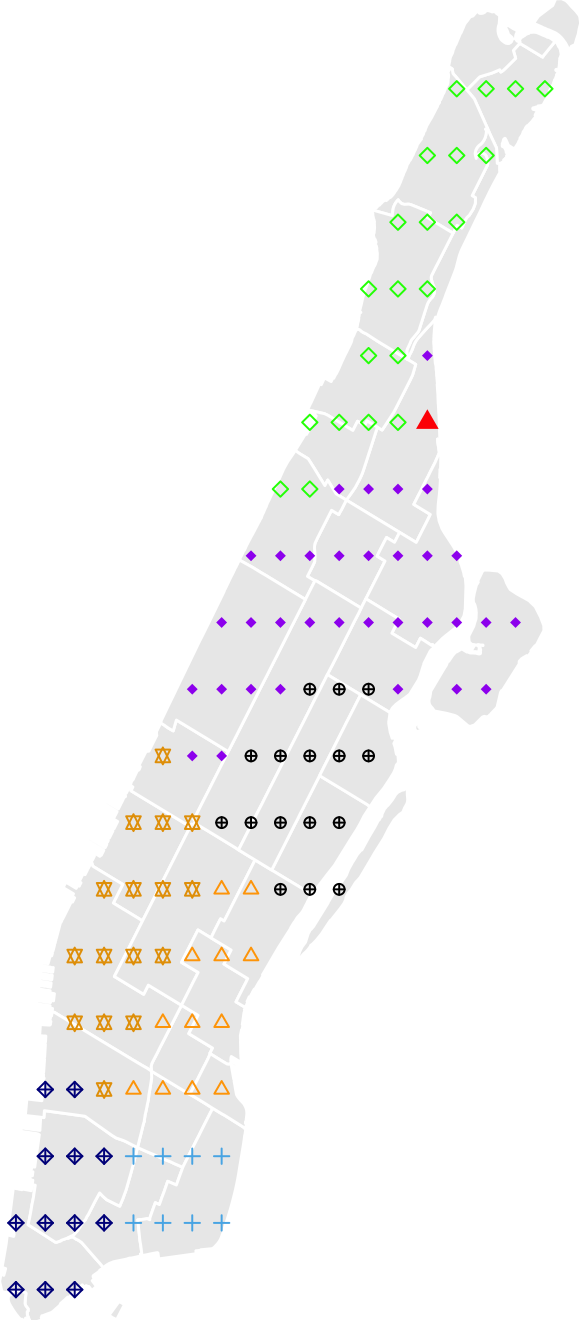}}
	\caption{Based on second order Markov model, state aggregation results are different with different initial state (the red triangle denotes the initial state $i$ in each subfigure)}\label{fig:cluster_order3_slice2}
\end{figure}

\section{Discussions and Additional Applications}\label{sec:discussion}

In this paper, we propose a general framework for high-order SVD. We introduce a novel procedure, tensor-train orthogonal iteration (TTOI), that efficiently estimates the low tensor train rank structure from the high-order tensor observation. TTOI has significant advantages over the classic ones in the literature. We establish a general deterministic error bound for TTOI with the support of several new representation lemmas for tensor matricizations. Under the commonly studied spiked tensor model, we establish an upper bound for TTOI and a matching information-theoretic lower bound. We also illustrate the merits of TTOI through simulation studies and a real data example in New York City taxi trips.

In addition to the high-order Markov processes, the proposed TTOI can also be applied to the {\it Markov random field (MRF)} estimation. We give a brief description of MRF below. Consider an undirected graph $G = (V, E)$, where $V = \{1, \dots, d\}$ is a set of vertices and $E \subseteq V \times V$ is a collection of edges. Each vertex $i\in V$ is associated with a random variable $X_i$, taking values in $\{s_1,\dots,s_p\}$. In an MRF model, the distribution of $\{X_1,\dots, X_d\}$ can be factorized as 
\begin{equation*}
\bbP(X_1,\dots,X_d)=\frac{1}{Z}\prod_{C\in \mathcal{C}}\psi_C(X_C),
\end{equation*}
where $\mathcal{C}$ is a collection of subgraphs of $G$ and $X_C=(X_v, v\in C)$ denotes the random vector corresponding to vertices in $C$. The joint probability function $\bbP(\cdot)$ can be written as a tensor $\bfc{P}\in\bbR^{\otimes_{k=1}^d p}$, where $\bfc{P}_{i_1,\dots,i_d}=\bbP(X_1=s_{i_1},\dots,X_d=s_{i_d})$. The MRFs have a wide range of applications, including image analysis \citep{li2009markov, zhang2001segmentation}, genomic study \citep{wei2007markov}, and natural language processing \citep{chaplot2015unsupervised}. The readers are referred to, e.g., \cite{wainwright2008graphical} for an introduction to MRFs.

A central problem of MRF is how to estimate the population density $\bfc{P}$ based on a limited number of samples $\{X^{(i)}_1,\dots,X^{(i)}_d\}_{i=1}^n$. It is straightforward to estimate $\bfc{P}$ via the empirical probability tensor $\widehat{\bfc{P}}^{\mathrm{emp}}$: 
$$\widehat{\bfc{P}}^{\mathrm{emp}}_{i_1,\dots,i_d}=\sum_{i=1}^n\prod_{k =1}^d1(X^{(i)}_k=s_{i_k})\Big/n.$$
We can show that $\widehat{\bfc{P}}^{\mathrm{emp}}$ is unbiased for $\bfc{P}$. Recently, \cite{novikov2014putting} pointed out that $\bfc{P}$ is often approximately low tensor-train rank in practice. To further exploit such the structure, we can conduct TTOI on $\widehat{\bfc{P}}^{\mathrm{emp}}$. Under regularity conditions, it can be shown that the entries of $\bfc{Z}$ are bounded and weakly independent, then Corollary \ref{thm:upper} suggests the following estimation error rate of the TTOI estimator: $\|\widehat{\bfc{P}}-\bfc{P}\|_F^2 \leq C\sum_{i=1}^{d}r_ir_{i-1}/(np^{2d-1})$, which can be significantly smaller than the estimation error of original empirical estimator $\widehat{\bfc{P}}^{\rm emp}$. 

Moreover, the proposed framework can be also applied to {\it high-order Markov decision process (high-order MDP)}. MDP has been commonly used as a baseline in control theory and reinforcement learning \citep{duan2020adaptive,puterman2014markov,singh1995reinforcement,sutton1998introduction}. Despite the wide applications of MDPs, most of the existing work focus on the first-order Markov processes. However, the high-order effects often appear, i.e., the transition probability at the current time depends not only on current, but also the past $(d-1)$ states and actions. See Figure \ref{fig:homdp} for an example. Since the number of free parameters in such MDPs can be huge, a sufficient dimension reduction for the state and action space can be a crucial first step. 
Similarly to the example of high-order Markov process in Section \ref{sec:example}, the TTOI can be applied to achieve better dimension reduction and state aggregation for the high-order Markov decision processes.
\begin{figure}[ht!]
	\centering
	\begin{minipage}[t]{\linewidth}
		\centering
		\subfigure{
			\includegraphics[width=0.45\linewidth]{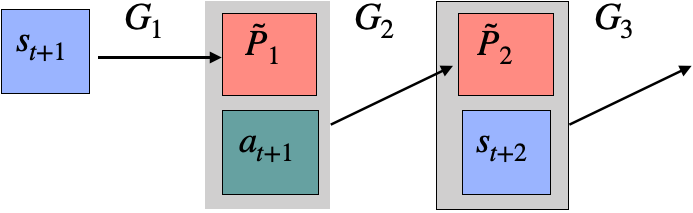}}
		\subfigure{
			\includegraphics[width=0.45\linewidth]{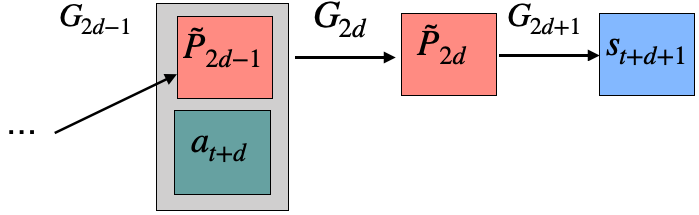}}
	\end{minipage}
	\caption{\small Illustration of a high-order state aggregatable Markov decision process}\label{fig:homdp}
\end{figure}

\section{Proofs}\label{sec:proof}
   
   We collect all technical proofs of this paper in this section.
   \subsection{Proof of Theorem \ref{thm:upper_deterministic}}\label{sec:proof_upper_determinisitc}
   	For convenience, let $\widehat{U}_i$, $\widehat{V}_i$, $R_i$ and $\widetilde{R}_i$ denote $\widehat{U}_i^{(0)}$, $\widehat{V}_i^{(1)}$, $R_i^{(0)}$ and $\widetilde{R}_i^{(0)}$, respectively. By Lemma \ref{lm:tt_representation} and
   	\begin{equation*}
   	\begin{split}
   	&I_{p_2\cdots p_d} - P_{(\widehat{V}_d \otimes I_{p_2\dots p_{d-1}})\cdots (\widehat{V}_{3} \otimes I_{p_2})\widehat{V}_{2}}\\ =& P_{(\widehat{V}_d \otimes I_{p_2\dots p_{d-1}})\cdots (\widehat{V}_{3} \otimes I_{p_2})\widehat{V}_{2\perp}} + P_{(\widehat{V}_d \otimes I_{p_2\dots p_{d-1}})\cdots (\widehat{V}_4 \otimes I_{p_2p_3})(\widehat{V}_{3\perp} \otimes I_{p_2})} + \cdots + P_{\widehat{V}_{d\perp} \otimes I_{p_2\dots p_{d-1}}},
   	\end{split}
   	\end{equation*}
   	we have
   	\begin{equation}\label{ineq14}
   	\begin{split}
   	&\left\|\widehat{\bfc{X}}^{(1)} - \bfc{X}\right\|_{\F}^2\\ =& \left\|\left[[\bfc{Y}]_1(\widehat{V}_d \otimes I_{p_2\dots p_{d-1}})\cdots (\widehat{V}_{3} \otimes I_{p_2})\widehat{V}_{2}\right]\widehat{V}_{2}^{\top}(\widehat{V}_{3}^{\top} \otimes I_{p_2})\cdots(\widehat{V}_d^{\top} \otimes I_{p_2\dots p_{d-1}}) - [\bfc{X}]_1\right\|_{\F}^2\\
   	=& \left\|[\bfc{Z}]_1P_{(\widehat{V}_d \otimes I_{p_2\dots p_{d-1}})\cdots (\widehat{V}_{3} \otimes I_{p_2})\widehat{V}_{2}} + [\bfc{X}]_1P_{(\widehat{V}_d \otimes I_{p_2\dots p_{d-1}})\cdots (\widehat{V}_{3} \otimes I_{p_2})\widehat{V}_{2}} - [\bfc{X}]_1\right\|_{\F}^2\\
   	\leq& C\bigg(\left\|[\bfc{Z}]_1P_{(\widehat{V}_d \otimes I_{p_2\dots p_{d-1}})\cdots (\widehat{V}_{3} \otimes I_{p_2})\widehat{V}_{2}}\right\|_{\F}^2 + \left\|[\bfc{X}]_1P_{(\widehat{V}_d \otimes I_{p_2\dots p_{d-1}})\cdots (\widehat{V}_{3} \otimes I_{p_2})\widehat{V}_{2\perp}}\right\|_{\F}^2\\
   	&+ \left\|[\bfc{X}]_1P_{(\widehat{V}_d \otimes I_{p_2\dots p_{d-1}})\cdots (\widehat{V}_4 \otimes I_{p_2p_3})(\widehat{V}_{3\perp} \otimes I_{p_2})}\right\|_{\F}^2 + \cdots + \left\|[\bfc{X}]_1P_{\widehat{V}_{d\perp} \otimes I_{p_2\dots p_{d-1}}}\right\|_{\F}^2\bigg)\\
   	\leq&  C\bigg(\left\|[\bfc{Z}]_1(\widehat{V}_d \otimes I_{p_2\dots p_{d-1}})\cdots (\widehat{V}_{3} \otimes I_{p_2})\widehat{V}_{2}\right\|_{\F}^2 + \left\|[\bfc{X}]_1(\widehat{V}_d \otimes I_{p_2\dots p_{d-1}})\cdots (\widehat{V}_{3} \otimes I_{p_2})\widehat{V}_{2\perp}\right\|_{\F}^2\\
   	&+ \left\|[\bfc{X}]_1(\widehat{V}_d \otimes I_{p_2\dots p_{d-1}})\cdots (\widehat{V}_4 \otimes I_{p_2p_3}) (\widehat{V}_{3\perp} \otimes I_{p_2})\right\|_{\F}^2 +\cdots+ \left\|[\bfc{X}]_1(\widehat{V}_{d\perp} \otimes I_{p_2\dots p_{d-1}})\right\|_{\F}^2\bigg).
   	\end{split}
   	\end{equation}
   	To prove \eqref{ineq:upper_deterministic}, we only need to show that for all $2 \leq k \leq d$,
   	\begin{equation}\label{ineq13}
   	\begin{split}
   	&\left\|[\bfc{X}]_1(\widehat{V}_d \otimes I_{p_2\dots p_{d-1}})\cdots (\widehat{V}_{k+1} \otimes I_{p_2\cdots p_k})(\widehat{V}_{k\perp} \otimes I_{p_2\cdots p_{k-1}})\right\|_{\F}\\ \leq& C\left\|\widehat{U}_{k-1}^{\top}(I_{p_{k-1}} \otimes \widehat{U}_{k-2}^{\top})\cdots(I_{p_2\cdots p_{k-1}} \otimes \widehat{U}_1^{\top})[\bfc{Z}]_{k-1}(\widehat{V}_d \otimes I_{p_k\dots p_{d-1}})\cdots (\widehat{V}_{k+1} \otimes I_{p_k})\right\|_{\F},
   	\end{split}
   	\end{equation}
   	where
   	\begin{equation*}
   		[\bfc{X}]_1(\widehat{V}_d \otimes I_{p_2\dots p_{d-1}})\cdots (\widehat{V}_{k+1} \otimes I_{p_2\cdots p_k})(\widehat{V}_{k\perp} \otimes I_{p_2\cdots p_{k-1}}) = [\bfc{X}]_1(\widehat{V}_d \otimes I_{p_2\dots p_{d-1}})\cdots (\widehat{V}_{3} \otimes I_{p_2})\widehat{V}_{2\perp}
   	\end{equation*}
   	if $k = 2$ and
   	\begin{equation*}
   		[\bfc{X}]_1(\widehat{V}_d \otimes I_{p_2\dots p_{d-1}})\cdots (\widehat{V}_{k+1} \otimes I_{p_2\cdots p_k})(\widehat{V}_{k\perp} \otimes I_{p_2\cdots p_{k-1}}) = [\bfc{X}]_1(\widehat{V}_{d\perp} \otimes I_{p_2\dots p_{d-1}})
   	\end{equation*}
   	if $k = d$.\\
   	By Lemma \ref{lm:realignment}, we have
   	\begin{equation}\label{eq3}
   	\begin{split}
   	&\left\|[\bfc{X}]_1(\widehat{V}_d \otimes I_{p_2\dots p_{d-1}})\cdots (\widehat{V}_{k+1} \otimes I_{p_2\cdots p_k})(\widehat{V}_{k\perp} \otimes I_{p_2\cdots p_{k-1}})\right\|_{\F}\\
   	=& \left\|[A^{(p_2\cdots p_{k-1}, p_1)}]^\top([\bfc{X}]_{k-1} \otimes I_{p_2\cdots p_{k-1}})(\widehat{V}_d \otimes I_{p_2\dots p_{d-1}})\cdots (\widehat{V}_{k+1} \otimes I_{p_2\cdots p_k})(\widehat{V}_{k\perp} \otimes I_{p_2\cdots p_{k-1}})\right\|_{\F}\\
   	=& \left\|[A^{(p_2\cdots p_{k-1}, p_1)}]^\top\left(\left([\bfc{X}]_{k-1}(\widehat{V}_d \otimes I_{p_k\dots p_{d-1}})\cdots (\widehat{V}_{k+1} \otimes I_{p_k})\widehat{V}_{k\perp}\right)\otimes I_{p_2\cdots p_{k-1}}\right) \right\|_{\F}\\
   	=& \left\|[\bfc{X}]_{k-1}(\widehat{V}_d \otimes I_{p_k\dots p_{d-1}})\cdots (\widehat{V}_{k+1} \otimes I_{p_k})\widehat{V}_{k\perp}\right\|_{\F}.
   	\end{split}
   	\end{equation}
   	The third equation holds since the realignment doesn't change the Frobenious norm.\\
   	Moreover, recall that $U_1 \in \bbR^{p_1 \times r_1}$ is the left singular space of $[\bfc{X}]_1$, and $\widetilde{U}_j \in \bbR^{p_jr_{j-1} \times r_j}$ is the left singular space of $(I_{p_{j}} \otimes \widehat{U}_{j-1}^{\top})(I_{p_{j-1}p_{j}} \otimes \widehat{U}_{j-2}^{\top})\cdots(I_{p_2\cdots p_{j}} \otimes \widehat{U}_{1}^{\top})[\bfc{X}]_j$ for $2 \leq j \leq d - 1$, by Lemma \ref{lm:realignment}, for any $2 \leq k \leq d-1$,
   	\begin{equation}\label{eq5}
   	\begin{split}
   	[\bfc{X}]_k =& (I_{p_2\cdots p_k} \otimes [\bfc{X}]_1)A^{(p_2\cdots p_k, p_{k+1}\cdots p_d)}\\ =& (I_{p_2\cdots p_k} \otimes P_{U_1}[\bfc{X}]_1)A^{(p_2\cdots p_k, p_{k+1}\cdots p_d)}\\ =& (I_{p_2\cdots p_k} \otimes P_{U_1})(I_{p_2\cdots p_k} \otimes [\bfc{X}]_1)A^{(p_2\cdots p_k, p_{k+1}\cdots p_d)}\\ =& (I_{p_2\cdots p_k} \otimes P_{U_1})[\bfc{X}]_k,
   	\end{split}
   	\end{equation}
   	and for any $2 \leq j < k$, 
   	\begin{equation}\label{eq6}
   	\begin{split}
   	&(I_{p_{j}\cdots p_k} \otimes \widehat{U}_{j-1}^{\top})(I_{p_{j-1}\cdots p_{k}} \otimes \widehat{U}_{j-2}^{\top})\cdots(I_{p_2\cdots p_{k}} \otimes \widehat{U}_{1}^{\top})[\bfc{X}]_k\\
   	=&(I_{p_{j}\cdots p_k} \otimes \widehat{U}_{j-1}^{\top})(I_{p_{j-1}\cdots p_{k}} \otimes \widehat{U}_{j-2}^{\top})\cdots(I_{p_2\cdots p_{k}} \otimes \widehat{U}_{1}^{\top})(I_{p_{j+1}\cdots p_k} \otimes [\bfc{X}]_j)A^{(p_{j+1}\cdots p_k, p_{k+1}\cdots p_d)}\\
   	=&\left(I_{p_{j+1}\cdots p_k} \otimes \left[(I_{p_{j}} \otimes \widehat{U}_{j-1}^{\top})(I_{p_{j-1}p_{j}} \otimes \widehat{U}_{j-2}^{\top})\cdots(I_{p_2\cdots p_{j}} \otimes \widehat{U}_{1}^{\top})[\bfc{X}]_j\right]\right)A^{(p_{j+1}\cdots p_k, p_{k+1}\cdots p_d)}\\
   	=& \left(I_{p_{j+1}\cdots p_k} \otimes \left[P_{\widetilde{U}_j}(I_{p_{j}} \otimes \widehat{U}_{j-1}^{\top})(I_{p_{j-1}p_{j}} \otimes \widehat{U}_{j-2}^{\top})\cdots(I_{p_2\cdots p_{j}} \otimes \widehat{U}_{1}^{\top})[\bfc{X}]_j\right]\right)A^{(p_{j+1}\cdots p_k, p_{k+1}\cdots p_d)}\\
   	=& (I_{p_{j+1}\cdots p_k} \otimes P_{\widetilde{U}_j})(I_{p_{j}\cdots p_k} \otimes \widehat{U}_{j-1}^{\top})(I_{p_{j-1}\cdots p_{k}} \otimes \widehat{U}_{j-2}^{\top})\cdots(I_{p_2\cdots p_{k}} \otimes \widehat{U}_{1}^{\top})(I_{p_{j+1}\cdots p_k} \otimes [\bfc{X}]_j)A^{(p_{j+1}\cdots p_k, p_{k+1}\cdots p_d)}\\
   	=&(I_{p_{j+1}\cdots p_k} \otimes P_{\widetilde{U}_j})(I_{p_{j}\cdots p_k} \otimes \widehat{U}_{j-1}^{\top})(I_{p_{j-1}\cdots p_{k}} \otimes \widehat{U}_{j-2}^{\top})\cdots(I_{p_2\cdots p_{k}} \otimes \widehat{U}_{1}^{\top})[\bfc{X}]_k,
   	\end{split}      	 	
   	\end{equation}
   	where $A^{(i,j)}$ is defined in \eqref{eq:realignment} for any $i, j > 0$.\\
   	Therefore, by \eqref{eq5},
   	\begin{equation}\label{ineq26}
   	\begin{split}
   	&\left\|[\bfc{X}]_{k-1}(\widehat{V}_d \otimes I_{p_k\dots p_{d-1}})\cdots (\widehat{V}_{k+1} \otimes I_{p_k})\widehat{V}_{k\perp}\right\|_{\F}\\
   	=& \left\|(I_{p_2\cdots p_{k-1}} \otimes P_{U_1})[\bfc{X}]_{k-1}(\widehat{V}_d \otimes I_{p_k\dots p_{d-1}})\cdots (\widehat{V}_{k+1} \otimes I_{p_k})\widehat{V}_{k\perp}\right\|_{\F}\\
   	=& \left\|(I_{p_2\cdots p_{k-1}} \otimes U_1^\top)[\bfc{X}]_{k-1}(\widehat{V}_d \otimes I_{p_k\dots p_{d-1}})\cdots (\widehat{V}_{k+1} \otimes I_{p_k})\widehat{V}_{k\perp}\right\|_{\F}\\
   	\leq& \left\|(I_{p_2\cdots p_{k-1}} \otimes \widehat{U}_1^{\top})(I_{p_2\cdots p_{k-1}} \otimes U_1)(I_{p_2\cdots p_{k-1}} \otimes U_1^\top)[\bfc{X}]_{k-1}(\widehat{V}_d \otimes I_{p_k\dots p_{d-1}})\cdots (\widehat{V}_{k+1} \otimes I_{p_k})\widehat{V}_{k\perp}\right\|_{\F}\\&\cdot s_{\min}^{-1}\left((I_{p_2\cdots p_{k-1}} \otimes \widehat{U}_1^{\top})(I_{p_2\cdots p_{k-1}} \otimes U_1)\right)\\
   	=& \left\|(I_{p_2\cdots p_{k-1}} \otimes \widehat{U}_1^{\top})[\bfc{X}]_{k-1}(\widehat{V}_d \otimes I_{p_k\dots p_{d-1}})\cdots (\widehat{V}_{k+1} \otimes I_{p_k})\widehat{V}_{k\perp}\right\|_{\F}\cdot s_{\min}^{-1}(\widehat{U}_1^{\top}U_1).
   	\end{split}
   	\end{equation}
   	The inequality holds since $\|B\|_{\F} \leq \|AB\|_{\F}\cdot  s_{\min}^{-1}(A)$ for any invertible matrix $A \in \bbR^{m_1 \times m_1}$ and $B \in \bbR^{m_1 \times m_2}$; in the last step, we used $(I_{p_2\cdots p_{k-1}} \otimes U_1)(I_{p_2\cdots p_{k-1}} \otimes U_1^\top)[\bfc{X}]_{k-1} = (I_{p_2\cdots p_{k-1}} \otimes P_{U_1})[\bfc{X}]_{k-1} = [\bfc{X}]_{k-1}$.
   	Similarly to \eqref{ineq26}, by \eqref{eq6}, for $1 \leq j \leq k-2$,
   	\begin{equation}\label{ineq27}
   	\begin{split}
   	&\left\|(I_{p_{j+1}\cdots p_{k-1}} \otimes \widehat{U}_{j}^{\top})\cdots(I_{p_2\cdots p_{k-1}} \otimes \widehat{U}_1^{\top})[\bfc{X}]_{k-1}(\widehat{V}_d \otimes I_{p_k\dots p_{d-1}})\cdots (\widehat{V}_{k+1} \otimes I_{p_k})\widehat{V}_{k\perp}\right\|_{\F}\\
   	=& \left\|(I_{p_{j+2}\cdots p_{k-1}} \otimes P_{\widetilde{U}_{j+1}})(I_{p_{j+1}\cdots p_{k-1}} \otimes \widehat{U}_{j}^{\top})\cdots(I_{p_2\cdots p_{k-1}} \otimes \widehat{U}_1^{\top})[\bfc{X}]_{k-1}(\widehat{V}_d \otimes I_{p_k\dots p_{d-1}})\cdots (\widehat{V}_{k+1} \otimes I_{p_k})\widehat{V}_{k\perp}\right\|_{\F}\\
   	=& \left\|(I_{p_{j+2}\cdots p_{k-1}} \otimes \widetilde{U}_{j+1}^\top)(I_{p_{j+1}\cdots p_{k-1}} \otimes \widehat{U}_{j}^{\top})\cdots(I_{p_2\cdots p_{k-1}} \otimes \widehat{U}_1^{\top})[\bfc{X}]_{k-1}(\widehat{V}_d \otimes I_{p_k\dots p_{d-1}})\cdots (\widehat{V}_{k+1} \otimes I_{p_k})\widehat{V}_{k\perp}\right\|_{\F}\\
   	\leq& \left\|(I_{p_{j+2}\cdots p_{k-1}} \otimes \widehat{U}_{j+1}^\top)(I_{p_{j+1}\cdots p_{k-1}} \otimes \widehat{U}_{j}^{\top})\cdots(I_{p_2\cdots p_{k-1}} \otimes \widehat{U}_1^{\top})[\bfc{X}]_{k-1}(\widehat{V}_d \otimes I_{p_k\dots p_{d-1}})\cdots (\widehat{V}_{k+1} \otimes I_{p_k})\widehat{V}_{k\perp}\right\|_{\F}\\
   	& \cdot  s_{\min}^{-1}(\widehat{U}_{j+1}^\top\widetilde{U}_{j+1}).
   	\end{split}
   	\end{equation} 
   	By \eqref{ineq26} and \eqref{ineq27}, 
   	\begin{equation}\label{ineq12}
   	\begin{split}
   	&\left\|[\bfc{X}]_{k-1}(\widehat{V}_d \otimes I_{p_k\dots p_{d-1}})\cdots (\widehat{V}_{k+1} \otimes I_{p_k})\widehat{V}_{k\perp}\right\|_{\F}\\
   	\leq& \left\|(I_{p_2\cdots p_{k-1}} \otimes \widehat{U}_1^{\top})[\bfc{X}]_{k-1}(\widehat{V}_d \otimes I_{p_k\dots p_{d-1}})\cdots (\widehat{V}_{k+1} \otimes I_{p_k})\widehat{V}_{k\perp}\right\|_{\F} s_{\min}^{-1}(\widehat{U}_1^{\top}U_1)\\
   	\leq& \left\|(I_{p_3\cdots p_{k-1}} \otimes \widehat{U}_2^{\top})(I_{p_2\cdots p_{k-1}} \otimes \widehat{U}_1^{\top})[\bfc{X}]_{k-1}(\widehat{V}_d \otimes I_{p_k\dots p_{d-1}})\cdots (\widehat{V}_{k+1} \otimes I_{p_k})\widehat{V}_{k\perp}\right\|_{\F}\\&\cdot  s_{\min}^{-1}(U_1^\top \widehat{U}_1) s_{\min}^{-1}(\widetilde{U}_2^{\top}\widehat{U}_2)\\ \leq&\dots\\\leq&  \left\|\widehat{U}_{k-1}^{\top}(I_{p_{k-1}} \otimes \widehat{U}_{k-2}^{\top})\cdots(I_{p_2\cdots p_{k-1}} \otimes \widehat{U}_1^{\top})[\bfc{X}]_{k-1}(\widehat{V}_d \otimes I_{p_k\dots p_{d-1}})\cdots (\widehat{V}_{k+1} \otimes I_{p_k})\widehat{V}_{k\perp}\right\|_{\F}
   	\\&\cdot  s_{\min}^{-1}(U_1^\top \widehat{U}_1) s_{\min}^{-1}(\widetilde{U}_2^{\top}\widehat{U}_2)\cdots  s_{\min}^{-1}(\widetilde{U}_{k-1}^{\top}\widehat{U}_{k-1})\\
   	\leq& \left\|\widehat{U}_{k-1}^{\top}(I_{p_{k-1}} \otimes \widehat{U}_{k-2}^{\top})\cdots(I_{p_2\cdots p_{k-1}} \otimes \widehat{U}_1^{\top})[\bfc{X}]_{k-1}(\widehat{V}_d \otimes I_{p_k\dots p_{d-1}})\cdots (\widehat{V}_{k+1} \otimes I_{p_k})\widehat{V}_{k\perp}\right\|_{\F}\\&\cdot \left(\frac{1}{\sqrt{1 - c_0^2}}\right)^{k-1}\\
   	\leq& C\left\|\widehat{U}_{k-1}^{\top}(I_{p_{k-1}} \otimes \widehat{U}_{k-2}^{\top})\cdots(I_{p_2\cdots p_{k-1}} \otimes \widehat{U}_1^{\top})[\bfc{X}]_{k-1}(\widehat{V}_d \otimes I_{p_k\dots p_{d-1}})\cdots (\widehat{V}_{k+1} \otimes I_{p_k})\widehat{V}_{k\perp}\right\|_{\F}.
   	\end{split} 	 
   	\end{equation}
   	By the definition of $\widehat{V}_{k} \in \bbR^{(p_kr_k) \times r_{k-1}}$ and Lemma \ref{lm:R_0}, we know that $\widehat{V}_{k}$ is the right singular space of 
   	\begin{equation*}
   	\begin{split}
   	&\widehat{U}_{k-1}^{\top}(I_{p_{k-1}} \otimes \widehat{U}_{k-2}^{\top})\cdots(I_{p_2\cdots p_{k-1}} \otimes \widehat{U}_1^{\top})[\bfc{Y}]_{k-1}(\widehat{V}_d \otimes I_{p_k\dots p_{d-1}})\cdots (\widehat{V}_{k+1} \otimes I_{p_k})\\
   	=& \widehat{U}_{k-1}^{\top}(I_{p_{k-1}} \otimes \widehat{U}_{k-2}^{\top})\cdots(I_{p_2\cdots p_{k-1}} \otimes \widehat{U}_1^{\top})[\bfc{X}]_{k-1}(\widehat{V}_d \otimes I_{p_k\dots p_{d-1}})\cdots (\widehat{V}_{k+1} \otimes I_{p_k})\\
   	&+ \widehat{U}_{k-1}^{\top}(I_{p_{k-1}} \otimes \widehat{U}_{k-2}^{\top})\cdots(I_{p_2\cdots p_{k-1}} \otimes \widehat{U}_1^{\top})[\bfc{Z}]_{k-1}(\widehat{V}_d \otimes I_{p_k\dots p_{d-1}})\cdots (\widehat{V}_{k+1} \otimes I_{p_k}),
   	\end{split}
   	\end{equation*}
   	Lemma \ref{lm:perturbation} shows that 
   	\begin{equation}\label{ineq11}
   	\begin{split}
   	&\left\|\widehat{U}_{k-1}^{\top}(I_{p_{k-1}} \otimes \widehat{U}_{k-2}^{\top})\cdots(I_{p_2\cdots p_{k-1}} \otimes \widehat{U}_1^{\top})[\bfc{X}]_{k-1}(\widehat{V}_d \otimes I_{p_k\dots p_{d-1}})\cdots (\widehat{V}_{k+1} \otimes I_{p_k})\widehat{V}_{k\perp}\right\|_{\F}\\
   	\leq& 2\left\|\widehat{U}_{k-1}^{\top}(I_{p_{k-1}} \otimes \widehat{U}_{k-2}^{\top})\cdots(I_{p_2\cdots p_{k-1}} \otimes \widehat{U}_1^{\top})[\bfc{Z}]_{k-1}(\widehat{V}_d \otimes I_{p_k\dots p_{d-1}})\cdots (\widehat{V}_{k+1} \otimes I_{p_k})\right\|_{\F}.
   	\end{split}
   	\end{equation}
   	Combine \eqref{eq3}, \eqref{ineq12} and \eqref{ineq11} together, we know that \eqref{ineq13} holds for all $2 \leq k \leq d$, which has finished the proof of Theorem \ref{thm:upper_deterministic}.

   \subsection{Proof of Theorem \ref{thm:convergence}}\label{sec:proof_convergence}
   For $i \geq 1$, by the definition of $\bfc{X}^{(2i)}$ and Lemma \ref{lm:tt_representation}, we have
   \begin{equation*}
   \begin{split}
   \left\|\bfc{Y} - \widehat{\bfc{X}}^{(2i)}\right\|_{\F}^2 =& \left\|\left(I_{p_1\cdots p_{d-1}} - P_{(I_{p_2\cdots p_{d-1}} \otimes \widehat{U}_1^{(2i)})\cdots (I_{p_{d-1}} \otimes \widehat{U}_{d-2}^{(2i)})\widehat{U}_{d-1}^{(2i)}}\right)[\bfc{Y}]_{d-1}\right\|_{\F}^2\\
   =& \left\|[\bfc{Y}]_{d-1}\right\|_{\F}^2 - \left\|P_{(I_{p_2\cdots p_{d-1}} \otimes \widehat{U}_1^{(2i)})\cdots (I_{p_{d-1}} \otimes \widehat{U}_{d-2}^{(2i)})\widehat{U}_{d-1}^{(2i)}}[\bfc{Y}]_{d-1}\right\|_{\F}^2\\
   =& \left\|\bfc{Y}\right\|_{\F}^2 - \left\|\widehat{\bfc{X}}^{(2i)}\right\|_{\F}^2.
   \end{split}
   \end{equation*}
   Similarly, we have 
   \begin{equation*}
   \left\|\bfc{Y} - \widehat{\bfc{X}}^{(2i - 1)}\right\|_{\F}^2 = \left\|\bfc{Y}\right\|_{\F}^2 - \left\|\widehat{\bfc{X}}^{(2i - 1)}\right\|_{\F}^2.
   \end{equation*}
   In addition, we have
   \begin{equation*}
   \begin{split}
   \left\|\bfc{Y} - \widehat{\bfc{X}}^{(2i)}\right\|_{\F}^2
   =& \left\|[\bfc{Y}]_{d-1}\right\|_{\F}^2 - \left\|P_{(I_{p_2\cdots p_{d-1}} \otimes \widehat{U}_1^{(2i)})\cdots (I_{p_{d-1}} \otimes \widehat{U}_{d-2}^{(2i)})\widehat{U}_{d-1}^{(2i)}}[\bfc{Y}]_{d-1}\right\|_{\F}^2\\
   =& \left\|[\bfc{Y}]_{d-1}\right\|_{\F}^2 - \left\|\widehat{U}_{d-1}^{(2i)\top}(I_{p_{d-1}} \otimes \widehat{U}_{d-2}^{(2i)\top})\cdots(I_{p_2\cdots p_{d-1}} \otimes \widehat{U}_1^{(2i)\top})[\bfc{Y}]_{d-1}\right\|_{\F}^2\\
   =& \left\|[\bfc{Y}]_1\right\|_{\F}^2 - \left\|\widehat{U}_{d-1}^{(2i)\top}(I_{p_{d-1}} \otimes \widehat{U}_{d-2}^{(2i)\top})\cdots(I_{p_2\cdots p_{d-1}} \otimes \widehat{U}_1^{(2i)\top})[\bfc{Y}]_{d-1}\widehat{V}_d^{(2i-1)}\right\|_{\F}^2\\ &- \left\|\widehat{U}_{d-1}^{(2i)\top}(I_{p_{d-1}} \otimes \widehat{U}_{d-2}^{(2i)\top})\cdots(I_{p_2\cdots p_{d-1}} \otimes \widehat{U}_1^{(2i)\top})[\bfc{Y}]_{d-1}\widehat{V}_{d\perp}^{(2i-1)}\right\|_{\F}^2\\
   \leq& \left\|[\bfc{Y}]_1\right\|_{\F}^2 - \left\|\widehat{U}_{d-1}^{(2i)\top}(I_{p_{d-1}} \otimes \widehat{U}_{d-2}^{(2i)\top})\cdots(I_{p_2\cdots p_{d-1}} \otimes \widehat{U}_1^{(2i)\top})[\bfc{Y}]_{d-1}\widehat{V}_d^{(2i-1)}\right\|_{\F}^2\\
   =& \left\|[\bfc{Y}]_1\right\|_{\F}^2 - \left\|(I_{p_{d-1}} \otimes \widehat{U}_{d-2}^{(2i)\top})(I_{p_{d-2}p_{d-1}} \otimes \widehat{U}_{d-3}^{(2i)\top})\cdots(I_{p_2\cdots p_{d-1}} \otimes \widehat{U}_1^{(2i)\top})[\bfc{Y}]_{d-1}\widehat{V}_d^{(2i-1)}\right\|_{\F}^2.
   \end{split}
   \end{equation*}
   The last equation holds since $\widehat{U}_{d-1}^{(2i)}$ is the left singular space of $(I_{p_{d-1}} \otimes \widehat{U}_{d-2}^{(2i)\top})(I_{p_{d-2}p_{d-1}} \otimes \widehat{U}_{d-3}^{(2i)\top})\cdots(I_{p_2\cdots p_{d-1}} \otimes \widehat{U}_1^{(2i)\top})[\bfc{Y}]_{d-1}\widehat{V}_d^{(2i-1)}$.\\
   For any $B \in \bbR^{n \times r}$ and $1 \leq l \leq r$, we can check that the $l$-th columns of $A^{(m,n)}B$ and $(I_m \otimes B \otimes I_m)A^{(m,r)}$ are equal:
   $$(A^{(m,n)}B)_{[:, l]} = \sum_{j=1}^{n}B_{j,l}\sum_{k=1}^{m}e_{(k-1)mn+(j-1)m+k}^{(m^2n)} = ((I_m \otimes B \otimes I_m)A^{(m,r)})_{[:, l]}$$
   where $e_{(k-1)mn+(j-1)m+k}^{(m^2n)}$ is the $((k-1)mn+(j-1)m+k)$-th canonical basis of $\bbR^{m^2n}$ and $A^{(i,j)}$ is defined in \eqref{eq:realignment}. Therefore,
   \begin{equation*}
   	A^{(m,n)}B = (I_m \otimes B \otimes I_m)A^{(m,r)}.
   \end{equation*}
   By the last equation and Lemma \ref{lm:realignment}, we have
   \begin{equation*}
   \begin{split}
   &(I_{p_{d-1}} \otimes \widehat{U}_{d-2}^{(2i)\top})(I_{p_{d-2}p_{d-1}} \otimes \widehat{U}_{d-3}^{(2i)\top})\cdots(I_{p_2\cdots p_{d-1}} \otimes \widehat{U}_1^{(2i)\top})[\bfc{Y}]_{d-1}\widehat{V}_d^{(2i-1)}\\
   =& (I_{p_{d-1}} \otimes \widehat{U}_{d-2}^{(2i)\top})(I_{p_{d-2}p_{d-1}} \otimes \widehat{U}_{d-3}^{(2i)\top})\cdots(I_{p_2\cdots p_{d-1}} \otimes \widehat{U}_1^{(2i)\top})(I_{p_{d-1}} \otimes [\bfc{Y}]_{d-2})A^{(p_{d-1}, p_d)}\widehat{V}_d^{(2i-1)}\\
   =& \left(I_{p_{d-1}} \otimes \left(\widehat{U}_{d-2}^{(2i)\top}(I_{p_{d-2}} \otimes \widehat{U}_{d-3}^{(2i)\top})\cdots(I_{p_2\cdots p_{d-2}} \otimes \widehat{U}_1^{(2i)\top})[\bfc{Y}]_{d-2}\right)\right)\left(I_{p_{d-1}} \otimes (\widehat{V}_d^{(2i-1)} \otimes I_{p_{d-1}})\right)A^{(p_{d-1}, r_{d-1})}\\
   =& \left(I_{p_{d-1}} \otimes \left(\widehat{U}_{d-2}^{(2i)\top}(I_{p_{d-2}} \otimes \widehat{U}_{d-3}^{(2i)\top})\cdots(I_{p_2\cdots p_{d-2}} \otimes \widehat{U}_1^{(2i)\top})[\bfc{Y}]_{d-2}(\widehat{V}_d^{(2i-1)} \otimes I_{p_{d-1}})\right)\right)A^{(p_{d-1}, r_{d-1})}\\
   =& \mathrm{Reshape}\big(\widehat{U}_{d-2}^{(2i)\top}(I_{p_{d-2}} \otimes \widehat{U}_{d-3}^{(2i)\top})\cdots(I_{p_2\cdots p_{d-2}} \otimes \widehat{U}_1^{(2i)\top})[\bfc{Y}]_{d-2}(\widehat{V}_d^{(2i-1)} \otimes I_{p_{d-1}}), r_{d-2}p_{d-1}, r_{d-1}\big).
   \end{split}
   \end{equation*}
  Since the realignment does not change the Frobenius norm, we have
   \begin{equation}\label{ineq32}
   	\left\|\bfc{Y} - \widehat{\bfc{X}}^{(2i)}\right\|_{\F}^2 \leq \left\|[\bfc{Y}]_1\right\|_{\F}^2 - \left\|\widehat{U}_{d-2}^{(2i)\top}(I_{p_{d-2}} \otimes \widehat{U}_{d-3}^{(2i)\top})\cdots(I_{p_2\cdots p_{d-2}} \otimes \widehat{U}_1^{(2i)\top})[\bfc{Y}]_{d-2}(\widehat{V}_d^{(2i-1)} \otimes I_{p_{d-1}})\right\|_{\F}^2.
   \end{equation}
   
   By similar proof of \eqref{ineq32}, we have 
   \begin{equation*}
   \begin{split}
   \left\|\bfc{Y} - \widehat{\bfc{X}}^{(2i)}\right\|_{\F}^2 \leq& \left\|[\bfc{Y}]_1\right\|_{\F}^2 - \left\|\widehat{U}_{d-2}^{(2i)\top}(I_{p_{d-2}} \otimes \widehat{U}_{d-3}^{(2i)\top})\cdots(I_{p_2\cdots p_{d-2}} \otimes \widehat{U}_1^{(2i)\top})[\bfc{Y}]_{d-2}(\widehat{V}_d^{(2i-1)} \otimes I_{p_{d-1}})\right\|_{\F}^2\\
   =& \left\|[\bfc{Y}]_1\right\|_{\F}^2 - \left\|\widehat{U}_{d-2}^{(2i)\top}(I_{p_{d-2}} \otimes \widehat{U}_{d-3}^{(2i)\top})\cdots(I_{p_2\cdots p_{d-2}} \otimes \widehat{U}_1^{(2i)\top})[\bfc{Y}]_{d-2}(\widehat{V}_d^{(2i-1)} \otimes I_{p_{d-1}})\widehat{V}_{d-1}^{(2i-1)}\right\|_{\F}^2\\
   & - \left\|\widehat{U}_{d-2}^{(2i)\top}(I_{p_{d-2}} \otimes \widehat{U}_{d-3}^{(2i)\top})\cdots(I_{p_2\cdots p_{d-2}} \otimes \widehat{U}_1^{(2i)\top})[\bfc{Y}]_{d-2}(\widehat{V}_d^{(2i-1)} \otimes I_{p_{d-1}})\widehat{V}_{d-1\perp}^{(2i-1)}\right\|_{\F}^2\\
   \leq& \left\|[\bfc{Y}]_1\right\|_{\F}^2 - \left\|\widehat{U}_{d-2}^{(2i)\top}(I_{p_{d-2}} \otimes \widehat{U}_{d-3}^{(2i)\top})\cdots(I_{p_2\cdots p_{d-2}} \otimes \widehat{U}_1^{(2i)\top})[\bfc{Y}]_{d-2}(\widehat{V}_d^{(2i-1)} \otimes I_{p_{d-1}})\widehat{V}_{d-1}^{(2i-1)}\right\|_{\F}^2\\
   =& \left\|[\bfc{Y}]_1\right\|_{\F}^2 - \left\|(I_{p_{d-2}} \otimes \widehat{U}_{d-3}^{(2i)\top})\cdots(I_{p_2\cdots p_{d-2}} \otimes \widehat{U}_1^{(2i)\top})[\bfc{Y}]_{d-2}(\widehat{V}_d^{(2i-1)} \otimes I_{p_{d-1}})\widehat{V}_{d-1}^{(2i-1)}\right\|_{\F}^2\\
   \leq& \cdots \\
   \leq& \left\|[\bfc{Y}]_1\right\|_{\F}^2 - \left\|[\bfc{Y}]_1(\widehat{V}_d^{(2i-1)} \otimes I_{p_2\dots p_{d-1}})\cdots (\widehat{V}_{3}^{(2i-1)} \otimes I_{p_2})\widehat{V}_{2}^{(2i-1)}\right\|_{\F}^2\\
   =& \left\|[\bfc{Y}]_1\left(I_{p_2\cdots p_d} - P_{(\widehat{V}_d^{(2i-1)} \otimes I_{p_2\dots p_{d-1}})\cdots (\widehat{V}_{3}^{(2i-1)} \otimes I_{p_2})\widehat{V}_{2}^{(2i-1)}}\right)\right\|_{\F}^2\\
   = & \left\|\bfc{Y} - \widehat{\bfc{X}}^{(2i-1)}\right\|_{\F}^2.
   \end{split}
   \end{equation*}
   Similarly, we can prove \eqref{ineq:convergence} holds for $k = 2i, i \geq 0$.

   \subsection{Proof of Theorem \ref{thm:upper_initialization}}\label{sec:proof_upper_initialization}
   	   Without loss of generality, we assume $\sigma^2 = 1$. We still let $\widehat{U}_i$, $\widehat{V}_i$, $R_i$ and $\widetilde{R}_i$ denote $\widehat{U}_i^{(0)}$, $\widehat{V}_i^{(1)}$, $R_i^{(0)}$ and $\widetilde{R}_i^{(0)}$, respectively.\\
   	   Lemma \ref{lm:concentration_gaussian} Part 4 immediately shows that \eqref{ineq35} holds with probability at least $1 - Ce^{-cp}$. Next, we show that with probability at least $1 - Ce^{-cp}$,
   	   \begin{equation}\label{ineq:perturbation_initial}
   	   \begin{split}
   	   \left\|\sin\Theta(\widehat{U}_k, \widetilde{U}_k)\right\| \leq C\frac{\sqrt{\sum_{i=1}^{k-1}p_ir_{i-1}r_i} + \sqrt{p_kr_{k-1}} + \sqrt{p_{k+1}\cdots p_d}}{\lambda_k} \leq \frac{1}{2}, \quad \forall 1 \leq k \leq d-1. 
   	   \end{split}
   	   \end{equation}
   	   Recall that
   	   \begin{equation*}
   	   \widehat{U}_1 = \text{SVD}_{r_1}^L([\bfc{Y}]_1), \quad [\bfc{Y}]_1 = [\bfc{X}]_1 + [\bfc{Z}]_1,
   	   \end{equation*}
   	   where $[\bfc{X}]_1 \in \bbR^{p_1 \times p_{-1}}$ satisfying rank$([\bfc{X}]_1) = r_1$, $[\bfc{Z}]_1 \in \bbR^{p_1 \times p_{-1}}$, by Lemmas \ref{lm:perturbation} and \ref{lm:concentration_gaussian}, with probability $1 - Ce^{-cp}$, we have
   	   \begin{equation*}
   	   \|\widehat{U}_{1\perp}^\top [\bfc{X}]_1\| \leq 2\|[\bfc{Z}]_1\| \leq C(p_1^{1/2} + (p_2\cdots p_d)^{1/2}).
   	   \end{equation*}
   	   Therefore, with probability at least $1 - Ce^{-cp}$,
   	   \begin{equation*}
   	   \left\|\sin\Theta(\widehat{U}_1, U_1)\right\| \leq \frac{\left\|\widehat{U}_{1\perp}^\top U_1U_1^\top [\bfc{X}]_1\right\|}{s_{r_1}(U_1^\top [\bfc{X}]_1)} = \frac{\left\|\widehat{U}_{1\perp}^\top [\bfc{X}]_1\right\|}{s_{r_1}([\bfc{X}]_1)} \leq C\frac{\sqrt{p_1} + \sqrt{p_2\cdots p_d  }}{\lambda_1}.
   	   \end{equation*}
   	   For $2 \leq i \leq  j \leq d - 1$, by the definition of $\widetilde{U}_i$ and Lemma \ref{lm:realignment}, we have 
   	   \begin{equation}\label{eq7}
   	   	\begin{split}
   	   	[\bfc{X}]_j
   	   	=& (I_{p_{2}\cdots p_j} \otimes [\bfc{X}]_1)A^{(p_{2}\cdots p_j, p_{j+1}\cdots p_d)}
   	   	= (I_{p_{2}\cdots p_j} \otimes (P_{U_1}[\bfc{X}]_1))A^{(p_{2}\cdots p_j, p_{j+1}\cdots p_d)}\\
   	   	=& (I_{p_{2}\cdots p_j} \otimes P_{U_1})(I_{p_{2}\cdots p_j} \otimes [\bfc{X}]_1)A^{(p_{2}\cdots p_j, p_{j+1}\cdots p_d)}
   	   	= (I_{p_{2}\cdots p_j} \otimes U_1)(I_{p_{2}\cdots p_j} \otimes U_1^\top)[\bfc{X}]_j
   	   	\end{split}
   	   \end{equation}
   	   and
   	   \begin{equation}\label{eq2}
   	   \begin{split}
   	   &\left(I_{p_i\cdots p_j} \otimes \widehat{U}_{i-1}^{\top}\right)\cdots \left(I_{p_2\cdots p_{j}} \otimes \widehat{U}_{1}^{\top}\right)[\bfc{X}]_j\\
   	   =& \left(I_{p_{i+1}\cdots p_j} \otimes (I_{p_i} \otimes \widehat{U}_{i-1}^{\top}) \right)\cdots \left(I_{p_{i+1}\cdots p_j} \otimes (I_{p_2\cdots p_i} \otimes \widehat{U}_{1}^{\top})\right)(I_{p_{i+1}\cdots p_j} \otimes [\bfc{X}]_i)A^{(p_{i+1}\cdots p_j, p_{j+1}\cdots p_d)}\\
   	   =& \left(I_{p_{i+1}\cdots p_j} \otimes \left((I_{p_i} \otimes \widehat{U}_{i-1}^{\top})\cdots (I_{p_2\cdots p_i} \otimes \widehat{U}_{1}^{\top})[\bfc{X}]_i\right)\right)A^{(p_{i+1}\cdots p_j, p_{j+1}\cdots p_d)}\\
   	   =& \left(I_{p_{i+1}\cdots p_j} \otimes \left(P_{\widetilde{U}_i}(I_{p_i} \otimes \widehat{U}_{i-1}^{\top})\cdots (I_{p_2\cdots p_i} \otimes \widehat{U}_{1}^{\top})[\bfc{X}]_i\right)\right)A^{(p_{i+1}\cdots p_j, p_{j+1}\cdots p_d)}\\
   	   =& \left(I_{p_{i+1}\cdots p_j} \otimes P_{\widetilde{U}_i}\right)\left(I_{p_{i+1}\cdots p_j} \otimes \left((I_{p_i} \otimes \widehat{U}_{i-1}^{\top})\cdots (I_{p_2\cdots p_i} \otimes \widehat{U}_{1}^{\top})[\bfc{X}]_i\right)\right)A^{(p_{i+1}\cdots p_j, p_{j+1}\cdots p_d)}\\
   	   =& \left(I_{p_{i+1}\cdots p_j} \otimes \widetilde{U}_i\right)\left(I_{p_{i+1}\cdots p_j} \otimes \widetilde{U}_i^\top\right)\left(I_{p_i\cdots p_j} \otimes \widehat{U}_{i-1}^{\top}\right)\cdots \left(I_{p_2\cdots p_{j}} \otimes \widehat{U}_{1}^{\top}\right)[\bfc{X}]_j,
   	   \end{split}
   	   \end{equation}
   	   where $I_{p_{i+1}\cdots p_j} = 1$ if $i = j$. 
   	   Let $$L_k = \left\|\sin \Theta \left(\widetilde{U}_k, \widehat{U}_k\right)\right\|, \quad 2 \leq k \leq d - 1.$$ 
   	   For $k = 2$, by \eqref{eq7} and Lemma \ref{lm:singular_value_lower_bound}, with probability at least $1 - Ce^{-cp}$,
   	   \begin{equation*}
   	   \begin{split}
   	   s_{r_2}\left((I_{p_2} \otimes \widehat{U}_{1}^{\top})[\bfc{X}]_2\right) \geq& s_{\min}\left((I_{p_2} \otimes \widehat{U}_{1}^{\top})(I_{p_2} \otimes U_{1})\right)s_{r_2}([\bfc{X}]_2)\\
   	   =& s_{\min}(\widehat{U}_{1}^{\top}U_{1})\lambda_2\\
   	   =& \sqrt{1 - \|\sin\Theta (\widehat{U}_1, U_1)\|^2}\lambda_2\\
   	   \geq& \sqrt{\frac{3}{4}}\lambda_2.
   	   \end{split}
   	   \end{equation*}
   	   Since $\widehat{U}_{2} = \SVD_{r_2}^L((I_{p_2} \otimes \widehat{U}_{1}^{\top})[\bfc{Y}]_2)$, and $(I_{p_2} \otimes \widehat{U}_{1}^{\top})[\bfc{Y}]_2 = (I_{p_2} \otimes \widehat{U}_{1}^{\top})[\bfc{X}]_2 + (I_{p_2} \otimes \widehat{U}_{1}^{\top})[\bfc{Z}]_2$, by Lemma \ref{lm:perturbation} and Lemma \ref{lm:singular_value_lower_bound}, we know that with probability at least $1 - Ce^{-cpr}$, 
   	   \begin{equation*}
   	   \begin{split}
   	   \|\widehat{U}_{2\perp}^{\top}(I_{p_2} \otimes \widehat{U}_{1}^{\top})[\bfc{X}]_2\| \leq& 2\|(I_{p_2} \otimes \widehat{U}_{1}^{\top})[\bfc{Z}]_2\| \leq C(\sqrt{p_2r_1} + (p_3\cdots p_d)^{1/2} + \sqrt{p_1r_1}).
   	   \end{split}
   	   \end{equation*}
   	   Combine the two previous inequalities together and recall that $\widetilde{U}_2$ is the left singular space of $(I_{p_2} \otimes \widehat{U}_{1}^{\top})[\bfc{X}]_2$, we have
   	   \begin{equation*}
   	   \begin{split}
   	   \left\|\sin \Theta \left(\widehat{U}_2, \widetilde{U}_2\right)\right\| \leq& \frac{\|\widehat{U}_{2\perp}^{\top}\widetilde{U}_2\widetilde{U}_2^{\top}(I_{p_2} \otimes \widehat{U}_{1}^{\top})[\bfc{X}]_2\|}{s_{r_2}\left(\widetilde{U}_2^{\top}(I_{p_2} \otimes \widehat{U}_{1}^{\top})[\bfc{X}]_2\right)}\\ =& \frac{\|\widehat{U}_{2\perp}^{\top}(I_{p_2} \otimes \widehat{U}_{1}^{\top})[\bfc{X}]_2\|}{s_{r_2}\left((I_{p_2} \otimes \widehat{U}_{1}^{\top})[\bfc{X}]_2\right)}\\ \leq& C\frac{\sqrt{p_1r_1} + \sqrt{p_2r_1} + (p_3\cdots p_d)^{1/2}}{\lambda_2}
   	   \end{split}
   	   \end{equation*}
   	   with probability at least $1 - Ce^{-cp}$. \\Assume that \eqref{ineq:perturbation_initial} holds for $k \leq j - 1$ with probability $1 - Ce^{-cp}$. For $k = j$, 
   	   by Lemma \ref{lm:singular_value_lower_bound} and \eqref{eq2}, with probability at $1 - Ce^{-cp}$, we have
   	   \begin{equation}\label{ineq4}
   	   \begin{split}
   	   &s_{r_j}\left((I_{p_{j}} \otimes \widehat{U}_{j-1}^{\top})(I_{p_{j-1}p_{j}} \otimes \widehat{U}_{j-2}^{\top})\cdots(I_{p_2\cdots p_{j-1}p_{j}} \otimes \widehat{U}_{1}^{\top})[\bfc{X}]_j\right)\\
   	   \geq& s_{\min}\left((I_{p_{j}} \otimes \widehat{U}_{j-1}^{\top})(I_{p_j} \otimes \widetilde{U}_{j-1})\right)s_{r_j}\left((I_{p_{j-1}p_{j}} \otimes \widehat{U}_{j-2}^{\top})\cdots(I_{p_2\cdots p_{j-1}p_{j}} \otimes \widehat{U}_{1}^{\top})[\bfc{X}]_j\right)\\
   	   =& s_{\min}\left(\widehat{U}_{j-1}^{\top} \widetilde{U}_{j-1}\right)s_{r_j}\left((I_{p_{j-1}p_{j}} \otimes \widehat{U}_{j-2}^{\top})\cdots(I_{p_2\cdots p_{j-1}p_{j}} \otimes \widehat{U}_{1}^{\top})[\bfc{X}]_j\right)\\
   	   \geq& s_{\min}\left(\widehat{U}_{j-1}^{\top} \widetilde{U}_{j-1}\right)s_{\min}\left((I_{p_{j-1}p_{j}} \otimes \widehat{U}_{j-2}^{\top})(I_{p_{j-1}p_{j}} \otimes \widetilde{U}_{j-2})\right)\\&\cdot s_{r_j}\left((I_{p_{j-2}p_{j-1}p_{j}} \otimes \widehat{U}_{j-3}^{\top})\cdots(I_{p_2\cdots p_{j-1}p_{j}} \otimes \widehat{U}_{1}^{\top})[\bfc{X}]_j\right)\\
   	   \geq& \cdots\\
   	   \geq& s_{\min}\left(\widehat{U}_{j-1}^{\top} \widetilde{U}_{j-1}\right)\cdots s_{\min}\left(\widehat{U}_{1}^{\top} \widetilde{U}_{1}\right)s_{r_j}([\bfc{X}]_j)\\
   	   =& \sqrt{1 - L_{j-1}^{2}}\cdots \sqrt{1 - L_1^{2}}\lambda_j\\
   	   \geq& (\sqrt{3/4})^{j-1}\lambda_j
   	   \geq c\lambda_j.
   	   \end{split}
   	   \end{equation}
   	   In the last inequality, we used the fact that $d$ is a fixed number and $(\sqrt{3/4})^{j-1} \geq (\sqrt{3/4})^{d-1} \geq c$.\\
   	   By the definition of $\widehat{U}_{j}$ and Lemma \ref{lm:R_0}, we have $$\widehat{U}_{j} = \SVD_{r_j}^L\left((I_{p_{j}} \otimes \widehat{U}_{j-1}^{\top})(I_{p_{j-1}p_{j}} \otimes \widehat{U}_{j-2}^{\top})\cdots(I_{p_2\cdots p_{j-1}p_{j}} \otimes \widehat{U}_{1}^{\top})[\bfc{Y}]_j\right).$$ Note that 
   	   \begin{equation*}
   	   \begin{split}
   	   &(I_{p_{j}} \otimes \widehat{U}_{j-1}^{\top})(I_{p_{j-1}p_{j}} \otimes \widehat{U}_{j-2}^{\top})\cdots(I_{p_2\cdots p_{j-1}p_{j}} \otimes \widehat{U}_{1}^{\top})[\bfc{Y}]_j\\= &(I_{p_{j}} \otimes \widehat{U}_{j-1}^{\top})(I_{p_{j-1}p_{j}} \otimes \widehat{U}_{j-2}^{\top})\cdots(I_{p_2\cdots p_{j-1}p_{j}} \otimes \widehat{U}_{1}^{\top})[\bfc{X}]_j\\ &+ (I_{p_{j}} \otimes \widehat{U}_{j-1}^{\top})(I_{p_{j-1}p_{j}} \otimes \widehat{U}_{j-2}^{\top})\cdots(I_{p_2\cdots p_{j-1}p_{j}} \otimes \widehat{U}_{1}^{\top})[\bfc{Z}]_j,
   	   \end{split}
   	   \end{equation*}
   	   by Lemma \ref{lm:perturbation}, with probability at least $1 - e^{-cpr^2}$,
   	   \begin{equation*}
   	   \begin{split}
   	   &\left\|\widehat{U}_{j\perp}^{\top}(I_{p_{j}} \otimes \widehat{U}_{j-1}^{\top})(I_{p_{j-1}p_{j}} \otimes \widehat{U}_{j-2}^{\top})\cdots(I_{p_2\cdots p_{j-1}p_{j}} \otimes \widehat{U}_{1}^{\top})[\bfc{X}]_j\right\|\\
   	   \leq& 2\left\|(I_{p_{j}} \otimes \widehat{U}_{j-1}^{\top})(I_{p_{j-1}p_{j}} \otimes \widehat{U}_{j-2}^{\top})\cdots(I_{p_2\cdots p_{j-1}p_{j}} \otimes \widehat{U}_{1}^{\top})[\bfc{Z}]_j\right\|\\
   	   \leq& C\left(\left({\sum_{i=1}^{j-1}p_ir_{i-1}r_i}\right)^{1/2} + (p_{j}r_{j-1})^{1/2} +  (p_{j+1}\cdots p_d)^{1/2}\right).
   	   \end{split}
   	   \end{equation*}
   	   Therefore, with probability at least $1 - Ce^{-cp}$,
   	   \begin{equation*}
   	   \begin{split}
   	   \left\|\sin\Theta \left(\widehat{U}_j, \widetilde{U}_j\right)\right\|
   	   \leq& \frac{\left\|\widehat{U}_{j\perp}^{\top}\widetilde{U}_j\widetilde{U}_j^{\top}(I_{p_{j}} \otimes \widehat{U}_{j-1}^{\top})(I_{p_{j-1}p_{j}} \otimes \widehat{U}_{j-2}^{\top})\cdots(I_{p_2\cdots p_{j-1}p_{j}} \otimes \widehat{U}_{1}^{\top})[\bfc{X}]_j\right\|}{s_{r_j}\left(\widetilde{U}_j^{\top}(I_{p_{j}} \otimes \widehat{U}_{j-1}^{\top})(I_{p_{j-1}p_{j}} \otimes \widehat{U}_{j-2}^{\top})\cdots(I_{p_2\cdots p_{j-1}p_{j}} \otimes \widehat{U}_{1}^{\top})[\bfc{X}]_j\right)}\\
   	   = & \frac{\left\|\widehat{U}_{j\perp}^{\top}(I_{p_{j}} \otimes \widehat{U}_{j-1}^{\top})(I_{p_{j-1}p_{j}} \otimes \widehat{U}_{j-2}^{\top})\cdots(I_{p_2\cdots p_{j-1}p_{j}} \otimes \widehat{U}_{1}^{\top})[\bfc{X}]_j\right\|}{s_{r_j}\left((I_{p_{j}} \otimes \widehat{U}_{j-1}^{\top})(I_{p_{j-1}p_{j}} \otimes \widehat{U}_{j-2}^{\top})\cdots(I_{p_2\cdots p_{j-1}p_{j}} \otimes \widehat{U}_{1}^{\top})[\bfc{X}]_j\right)}\\
   	   \leq& C\frac{\left({\sum_{i=1}^{j-1}p_ir_{i-1}r_i}\right)^{1/2}  + (p_{j}r_{j-1})^{1/2} + (p_{j+1}\cdots p_d)^{1/2}}{\lambda_j}.
   	   \end{split}
   	   \end{equation*}
   	   Therefore, \eqref{ineq36} holds with probability $1 - Ce^{-cp}$.
   	   
   	   Finally, we consider \eqref{ineq37}. Let $\mathcal{E}_0 = \{\eqref{ineq36} \text{ and }\eqref{ineq35} \text{ hold}\}$. Without loss of generality, we only show that under $\mathcal{E}_0$,
   	   \begin{equation}\label{ineq9}
   	   \left\|\sin\Theta\left(\widehat{V}_k, \widetilde{V}_{k}\right)\right\| \leq C\frac{\sqrt{\sum_{i=1}^{d}p_ir_{i-1}r_i}}{\lambda_{k-1}} \leq \frac{1}{2}, \quad \forall 2 \leq k \leq d.
   	   \end{equation}
   	   In fact, \eqref{ineq9} can be proved by induction. Let $V_d \in \bbR^{p_d \times r_{d-1}}$ be the right singular space of $[\bfc{X}]_{d-1}$. Then there exists an orthogonal matrix $\widetilde{Q}_{d-1} \in \mathbb{O}_{r_{d-1}}$ such that $$V_d\widetilde{Q}_{d-1} = \text{SVD}^R\big(\widehat{U}_{d-1}^\top(I_{p_{d-1}} \otimes \widehat{U}_{d-2}^\top)\cdots(I_{p_{d-1}\dots p_2} \otimes \widehat{U}_{1}^\top)[\bfc{X}]_{d-1}\big).$$ Similarly to \eqref{ineq4}, under $\mathcal{E}_0$,
   	   \begin{equation*}
   	   s_{r_{d-1}}\left(\widehat{U}_{d-1}^\top(I_{p_{d-1}} \otimes \widehat{U}_{d-2}^\top)\cdots(I_{p_{d-1}\dots p_2} \otimes \widehat{U}_{1}^\top)[\bfc{X}]_{d-1}\right) \geq \left(\sqrt{3/4}\right)^{d-1}\lambda_{d-1} \geq c\lambda_{d-1}.
   	   \end{equation*}
   	   Therefore, by Lemma \ref{lm:perturbation}, under $\mathcal{E}_0$,
   	   \begin{equation*}
   	   \begin{split}
   	   \left\|\sin \Theta\left(\widehat{V}_d, V_d\right)\right\| =& \left\|\sin \Theta\left(\widehat{V}_d, V_d\widetilde{Q}_{d-1}\right)\right\|\\ 
   	   \leq& \frac{\left\|\widehat{U}_{d-1}^\top(I_{p_{d-1}} \otimes \widehat{U}_{d-2}^\top)\cdots(I_{p_{d-1}\dots p_2} \otimes \widehat{U}_{1}^\top)[\bfc{X}]_{d-1}\widehat{V}_{d\perp}^{\top}\right\|}{s_{r_{d-1}}\left(\widehat{U}_{d-1}^\top(I_{p_{d-1}} \otimes \widehat{U}_{d-2}^\top)\cdots(I_{p_{d-1}\dots p_2} \otimes \widehat{U}_{1}^\top)[\bfc{X}]_{d-1}\right)}\\
   	   \leq& \frac{2\left\|\widehat{U}_{d-1}^\top(I_{p_{d-1}} \otimes \widehat{U}_{d-2}^\top)\cdots(I_{p_{d-1}\dots p_2} \otimes \widehat{U}_{1}^\top)[\bfc{Z}]_{d-1}\right\|}{s_{r_{d-1}}\left(\widehat{U}_{d-1}^\top(I_{p_{d-1}} \otimes \widehat{U}_{d-2}^\top)\cdots(I_{p_{d-1}\dots p_2} \otimes \widehat{U}_{1}^\top)[\bfc{X}]_{d-1}\right)}\\
   	   \leq& C\frac{\sqrt{\sum_{i=1}^{d}p_ir_{i-1}r_i}}{\lambda_{d-1}}.
   	   \end{split}
   	   \end{equation*}
   	   Suppose \eqref{ineq9} holds for $j+1 \leq k \leq d$. For $k = j$, since $\widetilde{V}_{j}$ is the right singular space of $[\bfc{X}]_{j-1}(\widehat{V}_d \otimes I_{p_j\dots p_{d-1}})\cdots (\widehat{V}_{j + 1} \otimes I_{p_j})$, there exists $\widetilde{Q}_{j-1} \in \mathbb{O}_{r_{j-1}}$ such that $$\widetilde{V}_{j}\widetilde{Q}_{j-1} = SVD^R\big(\widehat{U}_{j-1}^{\top}(I_{p_{j-1}} \otimes \widehat{U}_{j-2}^{\top})\cdots(I_{p_2\cdots p_{j-1}} \otimes \widehat{U}_{1}^{\top})[\bfc{X}]_{j-1}(\widehat{V}_d \otimes I_{p_j\dots p_{d-1}})\cdots (\widehat{V}_{j + 1} \otimes I_{p_j})\big).$$ By Lemma \ref{lm:singular_value_lower_bound}, \eqref{eq7}, \eqref{eq2} and \eqref{ineq4}, under $\mathcal{E}_0$,
   	   \begin{equation*}
   	   \begin{split}
   	   &s_{r_{j-1}}\left(\widehat{U}_{j-1}^{\top}(I_{p_{j-1}} \otimes \widehat{U}_{j-2}^{\top})\cdots(I_{p_2\cdots p_{j-1}} \otimes \widehat{U}_{1}^{\top})[\bfc{X}]_{j-1}(\widehat{V}_d \otimes I_{p_j\dots p_{d-1}})\cdots (\widehat{V}_{j + 1} \otimes I_{p_j})\right)\\
   	   \geq& s_{r_{j-1}}\left(\widehat{U}_{j-1}^{\top}(I_{p_{j-1}} \otimes \widehat{U}_{j-2}^{\top})\cdots(I_{p_2\cdots p_{j-1}} \otimes \widehat{U}_{1}^{\top})[\bfc{X}]_{j-1}(\widehat{V}_d \otimes I_{p_j\dots p_{d-1}})\cdots (\widehat{V}_{j + 2} \otimes I_{p_jp_{j+1}})(\widetilde{V}_{j + 1} \otimes I_{p_j})\right)\\&\cdot s_{\min}\left((\widetilde{V}_{j + 1}^\top \otimes I_{p_j})(\widehat{V}_{j + 1} \otimes I_{p_j})\right)\\
   	   =& s_{r_{j-1}}\left(\widehat{U}_{j-1}^{\top}(I_{p_{j-1}} \otimes \widehat{U}_{j-2}^{\top})\cdots(I_{p_2\cdots p_{j-1}} \otimes \widehat{U}_{1}^{\top})[\bfc{X}]_{j-1}(\widehat{V}_d \otimes I_{p_j\dots p_{d-1}})\cdots (\widehat{V}_{j + 2} \otimes I_{p_jp_{j+1}})\right)\\&\cdot s_{\min}(\widetilde{V}_{j + 1}^\top \widehat{V}_{j + 1})\\
   	   \geq& \cdots\\
   	   \geq& s_{r_{j-1}}\left(\widehat{U}_{j-1}^{\top}(I_{p_{j-1}} \otimes \widehat{U}_{j-2}^{\top})\cdots(I_{p_2\cdots p_{j-1}} \otimes \widehat{U}_{1}^{\top})[\bfc{X}]_{j-1}\right)s_{\min}(\widetilde{V}_{d}^\top \widehat{V}_{d})\cdots s_{\min}(\widetilde{V}_{j + 1}^\top \widehat{V}_{j + 1})\\
   	   \geq& s_{\min}(\widehat{U}_{j-1}^{\top}\widetilde{U}_{j-1})s_{r_{j-1}}\left((I_{p_{j-1}} \otimes \widehat{U}_{j-2}^{\top})\cdots(I_{p_2\cdots p_{j-1}} \otimes \widehat{U}_{1}^{\top})[\bfc{X}]_{j-1}\right)s_{\min}(\widetilde{V}_{d}^\top \widehat{V}_{d})\cdots s_{\min}(\widetilde{V}_{j + 1}^\top \widehat{V}_{j + 1})\\
   	   \geq& \left(\sqrt{\frac{3}{4}}\right)^{j-1}\lambda_{j-1}\cdot\left(\sqrt{\frac{3}{4}}\right)^{d-j} \geq c\lambda_{j-1}.
   	   \end{split}
   	   \end{equation*}
   	   Note that $\widehat{V}_j \in \mathbb{O}_{p_jr_j, r_{j-1}}$ is the right singular space of $\widehat{U}_{j-1}^{\top}(I_{p_{j-1}} \otimes \widehat{U}_{j-2}^{\top})\cdots(I_{p_2\cdots p_{j-1}} \otimes \widehat{U}_{1}^{\top})[\bfc{Y}]_{j-1}(\widehat{V}_d \otimes I_{p_j\dots p_{d-1}})\cdots (\widehat{V}_{j + 1} \otimes I_{p_j})$ and
   	   \begin{equation*}
   	   \begin{split}
   	   &\widehat{U}_{j-1}^{\top}(I_{p_{j-1}} \otimes \widehat{U}_{j-2}^{\top})\cdots(I_{p_2\cdots p_{j-1}} \otimes \widehat{U}_{1}^{\top})[\bfc{Y}]_{j-1}(\widehat{V}_d \otimes I_{p_j\dots p_{d-1}})\cdots (\widehat{V}_{j + 1} \otimes I_{p_j})\\=&\widehat{U}_{j-1}^{\top}(I_{p_{j-1}} \otimes \widehat{U}_{j-2}^{\top})\cdots(I_{p_2\cdots p_{j-1}} \otimes \widehat{U}_{1}^{\top})[\bfc{X}]_{j-1}(\widehat{V}_d \otimes I_{p_j\dots p_{d-1}})\cdots (\widehat{V}_{j + 1} \otimes I_{p_j})\\
   	   &+\widehat{U}_{j-1}^{\top}(I_{p_{j-1}} \otimes \widehat{U}_{j-2}^{\top})\cdots(I_{p_2\cdots p_{j-1}} \otimes \widehat{U}_{1}^{\top})[\bfc{Z}]_{j-1}(\widehat{V}_d \otimes I_{p_j\dots p_{d-1}})\cdots (\widehat{V}_{j + 1} \otimes I_{p_j}),
   	   \end{split}
   	   \end{equation*}
   	   By Lemma \ref{lm:perturbation}, under $\mathcal{E}_0$,
   	   \begin{equation*}
   	   \begin{split}
   	   &\left\|\sin \Theta\left(\widehat{V}_j, \widetilde{V}_j\right)\right\| = \left\|\sin \Theta\left(\widehat{V}_j, \widetilde{V}_{j}\widetilde{Q}_{j-1}\right)\right\|\\
   	   \leq& \frac{\left\|\widehat{U}_{j-1}^{\top}(I_{p_{j-1}} \otimes \widehat{U}_{j-2}^{\top})\cdots(I_{p_2\cdots p_{j-1}} \otimes \widehat{U}_{1}^{\top})[\bfc{X}]_{j-1}(\widehat{V}_d \otimes I_{p_j\dots p_{d-1}})\cdots (\widehat{V}_{j + 1} \otimes I_{p_j})\widehat{V}_{j\perp}\right\|}{s_{r_{j-1}}\left(\widehat{U}_{j-1}^{\top}(I_{p_{j-1}} \otimes \widehat{U}_{j-2}^{\top})\cdots(I_{p_2\cdots p_{j-1}} \otimes \widehat{U}_{1}^{\top})[\bfc{X}]_{j-1}(\widehat{V}_d \otimes I_{p_j\dots p_{d-1}})\cdots (\widehat{V}_{j + 1} \otimes I_{p_j})\right)}\\
   	   \leq& \frac{2\left\|\widehat{U}_{j-1}^{\top}(I_{p_{j-1}} \otimes \widehat{U}_{j-2}^{\top})\cdots(I_{p_2\cdots p_{j-1}} \otimes \widehat{U}_{1}^{\top})[\bfc{Z}]_{j-1}(\widehat{V}_d \otimes I_{p_j\dots p_{d-1}})\cdots (\widehat{V}_{j + 1} \otimes I_{p_j})\right\|}{s_{r_{j-1}}\left(\widehat{U}_{j-1}^{\top}(I_{p_{j-1}} \otimes \widehat{U}_{j-2}^{\top})\cdots(I_{p_2\cdots p_{j-1}} \otimes \widehat{U}_{1}^{\top})[\bfc{X}]_{j-1}(\widehat{V}_d \otimes I_{p_j\dots p_{d-1}})\cdots (\widehat{V}_{j + 1} \otimes I_{p_j})\right)}\\
   	   \leq& C\frac{\left(\sum_{i=1}^d p_ir_ir_{i-1}\right)^{1/2}}{\lambda_{j-1}}.
   	   \end{split}
   	   \end{equation*}
   	   Therefore, under $\mathcal{E}_0$, \eqref{ineq9} holds.
   	   
   	   Thus, we have finished the proof of Theorem \ref{thm:upper_initialization}.

   \subsection{Proof of Corollary \ref{thm:upper}}\label{sec:proof_upper}
     		Let $Q = \{\eqref{ineq35}, \eqref{ineq:perturbation_initial}\text{ hold}\}$, then $\bbP(Q^c) \leq C\exp(-cp)$ and 
     		$$\|\widehat{\bfc{X}}^{(t)} - \bfc{X}\|_{\F}^2 \leq C\sum_{i=1}^{d}p_ir_ir_{i-1} \quad \text{under } Q.$$ 
     		Under $Q^c$, due to the property of projection matrices, we know that
     		\begin{equation*}
     		\left\|\widehat{\bfc{X}}^{(t)}\right\|_{\F} \leq \|\bfc{Y}\|_{\F} \leq \|\bfc{X}\|_{\F} + \|\bfc{Z}\|_{\F}.
     		\end{equation*}
     		Moreover, 
     		\begin{equation*}
     		\begin{split}
     		&\bbE\left\|\widehat{\bfc{X}}^{(t)} - \bfc{X}\right\|_{\F}^4 \leq C\left(\bbE\left\|\widehat{\bfc{X}}^{(t)}\right\|_{\F}^4 + \|\bfc{X}\|_{\F}^4\right) \leq C\|\bfc{X}\|_{\F}^4 + C\bbE\|\bfc{Z}\|_{\F}^4\\
     		\leq& C\exp(4c_0p) + C\bbE\left(\chi_{p_1\cdots p_d}^2\right)^2 \leq C\exp(4c_0p) + C(p_1\cdots p_d)^2\\
     		\leq& C\exp(4c_0p) + C\exp\left(2c_0p\right) \leq C\exp(4c_0p).
     		\end{split}
     		\end{equation*}
     		Therefore, we have the following upper bound for the Frobenius norm risk of $\widehat{\bfc{X}}$:
     		\begin{equation*}
     		\begin{split}
     		&\bbE\left\|\widehat{\bfc{X}}^{(t)} - \bfc{X}\right\|_{\F}^2 = \bbE\left\|\widehat{\bfc{X}}^{(t)} - \bfc{X}\right\|_{\F}^21_Q + \bbE\left\|\widehat{\bfc{X}}^{(t)} - \bfc{X}\right\|_{\F}^21_{Q^c}\\ \leq& C\sum_{i=1}^{d}p_ir_ir_{i-1} + \sqrt{\bbE\left\|\widehat{\bfc{X}}^{(t)} - \bfc{X}\right\|_{\F}^4\cdot \bbP(Q^c)}\\
     		\leq& C\sum_{i=1}^{d}p_ir_ir_{i-1} + C\exp\left((4c_0 - c)p/2\right).
     		\end{split}
     		\end{equation*}
     		By selecting $c_0 < c/4$, we have
     		\begin{equation*}
     		\bbE\left\|\widehat{\bfc{X}}^{(t)} - \bfc{X}\right\|_{\F}^2 \leq C\sum_{i=1}^{d}p_ir_ir_{i-1}.
     		\end{equation*}
     		
     		Therefore, we have finished the proof of Corollary \ref{thm:upper}. 

\subsection{Proof of Theorem \ref{thm:lower}}\label{sec:proof_lower}
	Since the i.i.d. Gaussian distribution, $\bfc{Z}\sim N(0, \sigma^2)$, is a special case of $\mathcal{D}$ and 
	$$ \inf_{\widehat{\bfc{X}}}\sup_{\bfc{X} \in \mathcal{F}_{\bp, \br}(\bm\lambda), D \in \mathcal{D}}\bbE_{\bfc{Z} \sim D}\left\|\widehat{\bfc{X}} - \bfc{X}\right\|_{\F}^2 \geq \inf_{\widehat{\bfc{X}}}\sup_{\bfc{X} \in \mathcal{F}_{\bp, \br}(\bm\lambda), \bfc{Z}\overset{\text{i.i.d.}}{\sim}N(0, \sigma^2)}\bbE_{\bfc{Z} \sim D}\left\|\widehat{\bfc{X}} - \bfc{X}\right\|_{\F}^2,$$ 
	we only need to focus on the setting that $\bfc{Z}\sim N(0, \sigma^2)$ while developing the lower bound result. 
	
	Without loss of generality, assume $\sigma^2 = 1$. Since $d$ is a fixed number, we only need to show that for any $1 \leq i \leq d$,
	\begin{equation}\label{ineq:lower}
	\inf_{\widehat{\bfc{X}}}\sup_{\bfc{X} \in \mathcal{F}_{\bp, \br}(\bm\lambda)}\bbE\left\|\widehat{\bfc{X}} - \bfc{X}\right\|_{\F}^2 \geq cp_ir_ir_{i-1}.
	\end{equation} 
	Suppose $\bfc{X}$ can be written as \eqref{TT}, $U_j \in \bbR^{(p_jr_{j-1}) \times r_j}$ and $V_j \in \bbR^{(p_jr_{j}) \times r_{j-1}}$ are reshaped from $\bfc{G}_j \in \bbR^{r_{j-1} \times p_j \times r_j}$, $G_1 = U_1$, $G_d = V_d$. For any $1 \leq i \leq d-1$, by Lemma \ref{lm:tt_representation}, we have
	\begin{equation}\label{TT_form}
		[\bfc{X}]_i = \left(I_{p_2\cdots p_i} \otimes U_1\right)\cdots (I_{p_i} \otimes U_{i-1})U_iV_{i+1}^\top\left(V_{i+2}^\top \otimes I_{p_{i+1}}\right)\cdots \left(V_d^\top \otimes I_{p_{i+1}\cdots p_{d-1}}\right).
	\end{equation}
	For all $j \neq i, 1 \leq j \leq d-1$, let $U_j \stackrel{\text{i.i.d.}}{\sim} N(0,1), V_d \stackrel{\text{i.i.d.}}{\sim} N(0,1)$ and $U_1, \dots, U_{i-1}, U_{i+1}, \dots, U_{d-1}, V_d$ are all independent. By Lemma \ref{lm:singular_value_lower_bound}, for any $1 \leq j \leq d-1$, we have 
	\begin{equation*}
		 s_{r_j}\left(\left(I_{p_2\cdots p_j} \otimes U_1\right)\cdots (I_{p_j} \otimes U_{j-1})U_j\right) \geq  s_{\min}\left(I_{p_2\cdots p_j} \otimes U_1\right)\cdots  s_{\min}(U_j) =  s_{r_1}(U_1)\cdots  s_{r_j}(U_j).
	\end{equation*}
	Similarly,
	\begin{equation*}
		 s_{r_j}\left(V_{j+1}^\top\left(V_{j+2}^\top \otimes I_{p_{j+1}}\right)\cdots \left(V_d^\top \otimes I_{p_{j+1}\cdots p_{d-1}}\right)\right) \geq  s_{r_j}(V_{j+1})\cdots  s_{r_{d-1}}(V_d).
	\end{equation*}
	Moreover, Lemma \ref{lm:singular_value_lower_bound} Part 1 tells us 
	\begin{equation}\label{ineq21}
		\begin{split}
		& s_{r_j}\left(\left(I_{p_2\cdots p_j} \otimes U_1\right)\cdots (I_{p_j} \otimes U_{j-1})U_jV_{j+1}^\top\left(V_{j+2}^\top \otimes I_{p_{j+1}}\right)\cdots \left(V_d^\top \otimes I_{p_{j+1}\cdots p_{d-1}}\right)\right)\\
		\geq&  s_{r_j}\left(\left(I_{p_2\cdots p_j} \otimes U_1\right)\cdots (I_{p_j} \otimes U_{j-1})U_j\right) s_{r_j}\left(V_{j+1}^\top\left(V_{j+2}^\top \otimes I_{p_{j+1}}\right)\cdots \left(V_d^\top \otimes I_{p_{j+1}\cdots p_{d-1}}\right)\right)\\
		\geq&  s_{r_1}(U_1)\cdots  s_{r_j}(U_j) s_{r_j}(V_{j+1})\cdots  s_{r_{d-1}}(V_d).
		\end{split}
	\end{equation}
	Recall that $V_j$ is reshaped from $U_j$ for all $1 \leq j \leq d - 1$, by \cite{vershynin2010introduction}[Corollary 5.35], we know that with probability at least $1 - Ce^{-cp}$, for all $1 \leq j \leq d-1, j \neq i$,
	\begin{equation}\label{ineq16}
		\begin{split}
		&\frac{\sqrt{p_jr_{j-1}}}{4} \leq \sqrt{p_jr_{j-1}} - \sqrt{r_j} - \frac{\sqrt{p_jr_{j-1}}}{25} \leq  s_{r_j}(U_j) \leq s_1(U_j) \leq \sqrt{p_jr_{j-1}} + \sqrt{r_j} + \frac{\sqrt{p_jr_{j-1}}}{25} \leq 2\sqrt{p_jr_{j-1}},\\
		&\frac{\sqrt{p_{j}r_{j}}}{4} \leq  s_{r_{j-1}}(V_j) \leq s_1(V_j) \leq 2\sqrt{p_{j}r_{j}}, \quad \text{ and } \quad \frac{\sqrt{p_d}}{4} \leq  s_{r_{d-1}}(V_d) \leq  s_{r_1}(V_d) \leq 2\sqrt{p_d}.
		\end{split}
	\end{equation}	
	For a fixed $U_0 \in \mathbb{O}_{p_ir_{i-1}, r_i}$, define the following ball with radius $\varepsilon > 0$,
	\begin{equation*}
		B(U_0, \varepsilon) = \left\{U' \in \mathbb{O}_{p_ir_{i-1}, r_i}: \|\sin\Theta(U', U_0)\|_{\F} \leq \varepsilon\right\}.
	\end{equation*}
	By Lemma 1 in \cite{cai2013sparse}, for $0 < \alpha < 1$ and $0 < \varepsilon \leq 1$, there exist $\widetilde{U}_i^{(1)'}, \dots, \widetilde{U}_i^{(m)'} \subseteq B(U_0, \varepsilon)$ such that
	\begin{equation*}
		m \geq \left(\frac{c_0}{\alpha}\right)^{r_i(p_ir_{i-1} - r_i)}, \quad \min_{1 \leq j \neq k \leq m}\left\|\sin\Theta\left(\widetilde{U}_i^{(j)'}, \widetilde{U}_i^{(k)'}\right)\right\|_{\F} \geq \alpha\varepsilon.
	\end{equation*}
	By Lemma 1 in \cite{cai2018rate}, one can find a rotation matrix $O_k \in \mathbb{O}_{r_i}$  such that
	\begin{equation*}
		\|U_0 - \widetilde{U}_i^{(k)'}O_k\|_{\F} \leq \sqrt{2}\left\|\sin\Theta\left(U_0, \widetilde{U}_i^{(k)'}\right)\right\|_{\F} \leq \sqrt{2}\varepsilon.
	\end{equation*}
	Let $\widetilde{U}_i^{(k)} = \widetilde{U}_i^{(k)'}O_k$, we have
	\begin{equation*}
		\left\|\widetilde{U}_i^{(k)} - U_0\right\|_{\F} \leq \sqrt{2}\varepsilon, \quad \left\|\sin\Theta\left(\widetilde{U}_i^{(j)}, \widetilde{U}_i^{(k)}\right)\right\|_{\F} \geq \alpha\varepsilon, \quad 1 \leq j < k \leq m.
	\end{equation*}
	Let $U_i^{(k)} = S + \widetilde{U}_i^{(k)}$, where $S \stackrel{\text{i.i.d.}}{\sim} N(0, \tau^2)$.
	Set $\tau \ge 8/\sqrt{p_i}$, \cite{vershynin2010introduction}[Corollary 5.35] shows that with probability at least $1 - Ce^{-cp}$,
	\begin{equation}\label{ineq19}
	\begin{split}
	&\frac{\tau\sqrt{p_ir_{i-1}}}{8} \leq \tau\left(\sqrt{p_ir_{i-1}} - \sqrt{r_{i}} - \frac{\sqrt{p_ir_{i-1}}}{25}\right) - 1 \leq  s_{r_i}\left(S\right) -  s_{1}\left(\widetilde{U}_i^{(k)}\right) \leq  s_{r_i}\left(U_i^{(k)}\right)\\ \leq&  s_{1}\left(U_i^{(k)}\right) \leq  s_{1}\left(S\right) +  s_{1}\left(\widetilde{U}_i^{(k)}\right) \leq \tau\left(\sqrt{p_ir_{i-1}} + \sqrt{r_{i}} + \frac{\sqrt{p_ir_{i-1}}}{25}\right) + 1 \leq 2\tau\sqrt{p_ir_{i-1}}.
	\end{split}
	\end{equation}
	 If $2 \leq i \leq d - 1$, since $V_i^{(k)}$ is reshaped from $U_i^{(k)}$, we know that $V_i^{(k)} = T + \widetilde{V}_i^{(k)}$, where $T \stackrel{\text{i.i.d.}}{\sim} N(0, \tau^2)$, and $\widetilde{V}_i^{(k)}$ is realigned from $\widetilde{U}_i^{(k)}$. Notice that 
	\begin{equation*}
		 s_{1}(\widetilde{V}_i^{(k)}) = \|\widetilde{V}_i^{(k)}\| \leq \|\widetilde{V}_i^{(k)}\|_{\F} = \|\widetilde{U}_i^{(k)}\|_{\F} = r_{i},
	\end{equation*}
	Since $\tau \ge 8/\sqrt{p_i}$, by \cite{vershynin2010introduction}[Corollary 5.35], with probability at least $1 - Ce^{-cp_ir_i}$,
	\begin{equation}\label{ineq20}
		\begin{split}
		&\frac{\tau\sqrt{p_ir_i}}{8} \leq \tau\left(\sqrt{p_ir_i} - \sqrt{r_{i-1}} - \frac{\sqrt{p_ir_i}}{25}\right) - \sqrt{r_{i}} \leq  s_{r_i}\left(T\right) -  s_{1}\left(\widetilde{V}_i^{(k)}\right) \leq  s_{r_i}\left(V_i^{(k)}\right)\\ \leq&  s_{1}\left(V_i^{(k)}\right) \leq  s_{1}\left(T\right) +  s_{1}\left(\widetilde{V}_i^{(k)}\right) \leq \tau\left(\sqrt{p_ir_i} + \sqrt{r_{i-1}} + \frac{\sqrt{p_ir_i}}{25}\right) + \sqrt{r_{i}} \leq 2\tau\sqrt{p_ir_i}.
		\end{split}
	\end{equation}
	Choose fixed $U_1, \cdots, U_{i-1}, V_{i+1}, \cdots, V_d, S$ such that \eqref{ineq16}, \eqref{ineq19} and \eqref{ineq20} hold. Let
	\begin{equation}\label{eq:tensor_construction}
	[\bfc{X}^{(k)}]_i = \left(I_{p_2\cdots p_i} \otimes U_1\right)\cdots (I_{p_i} \otimes U_{i-1})U_i^{(k)}V_{i+1}^\top\left(V_{i+2}^\top \otimes I_{p_{i+1}}\right)\cdots \left(V_d^\top \otimes I_{p_{i+1}\cdots p_{d-1}}\right)
	\end{equation}
	and $\bfc{X}^{(k)} \in \bbR^{p_1 \times \cdots \times p_d}$ is the corresponding tensor.
	\eqref{ineq21}, \eqref{ineq16}, \eqref{ineq19} and \eqref{ineq20} together show that
    \begin{equation}
    	\begin{split}
    	&\sigma_ {r_j}([\bfc{X}^{(k)}]_{j})
    	\geq \tau\prod_{k = 1}^{j}\frac{\sqrt{p_kr_{k-1}}}{8}\prod_{k = j+1}^{d}\frac{\sqrt{p_kr_k}}{8} = \tau\frac{\sqrt{p_1\cdots p_dr_1\cdots r_{d-1}}}{C\sqrt{r_j}}.
    	\end{split}
    \end{equation}
    By setting $\tau = \frac{C\max_{1 \leq i \leq d-1}\lambda_i\max_{1 \leq j \leq d-1}\sqrt{r_j}}{\sqrt{p_1\cdots p_dr_1\cdots r_{d-1}}} \vee 8\max_{1 \leq i \leq d-1}\sqrt{1/p_i}$, we have 
    \begin{equation*}
    	\sigma_ {r_j}\left([X^{(k)}]_{j}\right) \geq \lambda_j, \quad \forall 1 \leq j \leq d-1.
    \end{equation*}
    For $1 \leq k < j \leq m$,
    \begin{equation}
    	\begin{split}
    	&\|\bfc{X}^{(k)} - \bfc{X}^{(j)}\|_{\F}^2\\ =& \left\|\left(I_{p_2\cdots p_i} \otimes U_1\right)\cdots (I_{p_i} \otimes U_{i-1})\left(U_i^{(k)}-U_i^{(j)}\right)V_{i+1}^\top\left(V_{i+2}^\top \otimes I_{p_{i+1}}\right)\cdots \left(V_d^\top \otimes I_{p_{i+1}\cdots p_{d-1}}\right)\right\|_{\F}^2\\
    	\geq&  s_{\min}^2\left(\left(I_{p_2\cdots p_i} \otimes U_1\right)\cdots (I_{p_i} \otimes U_{i-1})\right)\left\|\left(U_i^{(k)}-U_i^{(j)}\right)V_{i+1}^\top\left(V_{i+2}^\top \otimes I_{p_{i+1}}\right)\cdots \left(V_d^\top \otimes I_{p_{i+1}\cdots p_{d-1}}\right)\right\|_{\F}^2\\
    	=&  s_{r_{i-1}}^2\left(\left(I_{p_2\cdots p_{i-1}} \otimes U_1\right)\cdots  U_{i-1}\right) s_{r_i}^2\left(V_{i+1}^\top\left(V_{i+2}^\top \otimes I_{p_{i+1}}\right)\cdots \left(V_d^\top \otimes I_{p_{i+1}\cdots p_{d-1}}\right)\right)\left\|U_i^{(k)}-U_i^{(j)}\right\|_{\F}^2\\
    	=&  s_{r_{i-1}}^2\left(\left(I_{p_2\cdots p_{i-1}} \otimes U_1\right)\cdots  U_{i-1}\right) s_{r_i}^2\left(V_{i+1}^\top\left(V_{i+2}^\top \otimes I_{p_{i+1}}\right)\cdots \left(V_d^\top \otimes I_{p_{i+1}\cdots p_{d-1}}\right)\right)\left\|\widetilde{U}_i^{(k)}-\widetilde{U}_i^{(j)}\right\|_{\F}^2\\
    	\geq&  s_{r_1}^2(U_1)\cdots  s_{r_{i-1}}^2(U_{i-1}) s_{r_i}^2(V_{i+1})\cdots  s_{r_{d-1}}^2(V_d)\min_{O \in \mathbb{O}_{r_i}}\left\|\widetilde{U}_i^{(k)}-\widetilde{U}_i^{(j)}O\right\|_{\F}^2\\
    	\geq&  \prod_{h=1}^{i-1}\frac{p_hr_{h-1}}{16}\prod_{l=i+1}^{d}\frac{p_lr_l}{16}\min_{O \in \mathbb{O}_{r_i}}\left\|\widetilde{U}_i^{(k)}-\widetilde{U}_i^{(j)}O\right\|_{\F}^2\\
    	\geq& \prod_{h=1}^{i-1}\frac{p_hr_{h-1}}{16}\prod_{l=i+1}^{d}\frac{p_lr_l}{16}\left\|\sin\Theta\left(\widetilde{U}_i^{(k)}, \widetilde{U}_i^{(j)}\right)\right\|_{\F}^2\\
    	\geq& c\left(\prod_{h=1}^{i-1}p_hr_{h-1}\prod_{l=i+1}^{d}p_lr_l\right)\alpha^2\varepsilon^2.
    	\end{split}
    \end{equation}
    In addition, let $\bfc{Y}^{(k)} = \bfc{X}^{(k)} + \bfc{Z}^{(k)}$ and $\bfc{Z}^{(k)} \stackrel{\text{i.i.d.}}{\sim} N(0, 1)$. The KL-divergence between distributions $\bfc{Y}^{(k)}$ and $\bfc{Y}^{(j)}$ is
    \begin{equation}
    	\begin{split}
    	&D_{KL}\left(\bfc{Y}^{(k)}||\bfc{Y}^{(j)}\right) = \frac{1}{2}\|\bfc{X}^{(k)} - \bfc{X}^{(j)}\|_{\F}^2\\ =& \frac{1}{2}\left\|\left(I_{p_2\cdots p_i} \otimes U_1\right)\cdots (I_{p_i} \otimes U_{i-1})\left(U_i^{(k)}-U_i^{(j)}\right)V_{i+1}^\top\left(V_{i+2}^\top \otimes I_{p_{i+1}}\right)\cdots \left(V_d^\top \otimes I_{p_{i+1}\cdots p_{d-1}}\right)\right\|_{\F}^2\\
    	\leq& \frac{1}{2}
    	\left\|\left(I_{p_2\cdots p_i} \otimes U_1\right)\cdots (I_{p_i} \otimes U_{i-1})\right\|^2\left\|V_{i+1}^\top\left(V_{i+2}^\top \otimes I_{p_{i+1}}\right)\cdots \left(V_d^\top \otimes I_{p_{i+1}\cdots p_{d-1}}\right)\right\|^2\left\|U_i^{(k)}-U_i^{(j)}\right\|_{\F}^2\\
    	\leq& \frac{1}{2} s_{1}^2(U_1)\cdots  s_{1}^2(U_{i-1}) s_{1}^2(V_{i+1})\cdots  s_{1}^2(V_d)\left\|U_i^{(k)}-U_i^{(j)}\right\|_{\F}^2\\
    	\leq& \frac{1}{2}\prod_{h=1}^{i-1}(4p_hr_{h-1})\prod_{l=i+1}^d(4p_lr_l)\left(\left\|U_i^{(k)}-U_0\right\|_{\F} + \left\|U_i^{(k)}-U_0\right\|_{\F}\right)^2\\
    	\leq& C\left(\prod_{h=1}^{i-1}(p_hr_{h-1})\prod_{l=i+1}^d(p_lr_l)\right)\varepsilon^2.
    	\end{split}
    \end{equation}
    By generalized Fano's Lemma,
    \begin{equation*}
    	\begin{split}
    	&\inf_{\widehat{\bfc{X}}}\sup_{\bfc{X} \in \{\bfc{X}^{(k)}\}_{k = 1}^m}\bbE\left\|\widehat{\bfc{X}} - \bfc{X}\right\|_{\F}\\ \geq& c\sqrt{\prod_{h=1}^{i-1}p_hr_{h-1}\prod_{l=i+1}^{d}p_lr_l}\alpha\varepsilon\left(1 - \frac{C\left(\prod_{h=1}^{i-1}(p_hr_{h-1})\prod_{l=i+1}^d(p_lr_l)\right)\varepsilon^2 + \log 2}{r_i(p_ir_{i-1} - r_i)\log(c_0/\alpha)}\right).
    	\end{split}
    \end{equation*}
    By setting $\varepsilon = c'\sqrt{\frac{r_i(p_ir_{i-1} - r_i)}{C\prod_{h=1}^{i-1}(p_hr_{h-1})\prod_{l=i+1}^d(p_lr_l)}} \leq \frac{1}{2}, \alpha = (c_0 \wedge 1)/8$, we know that for any $1 \leq i \leq d-1$,
    \begin{equation*}
    	\begin{split}
    	\inf_{\widehat{\bfc{X}}}\sup_{\bfc{X} \in \mathcal{F}_{\bp, \br}(\bm\lambda)}\bbE\left\|\widehat{\bfc{X}} - \bfc{X}\right\|_{\F}^2 \geq \left(\inf_{\widehat{\bfc{X}}}\sup_{\bfc{X} \in \{\bfc{X}^{(k)}\}_{k = 1}^m}\bbE\left\|\widehat{\bfc{X}} - \bfc{X}\right\|_{\F}\right)^2 \geq c_1r_ip_ir_{i-1}.
    	\end{split}
    \end{equation*}
    For $i = d$, similarly to the case $i = 1$, we have
    \begin{equation*}
    	\inf_{\widehat{\bfc{X}}}\sup_{\bfc{X} \in \mathcal{F}_{\bp, \br}(\bm\lambda)}\bbE\left\|\widehat{\bfc{X}} - \bfc{X}\right\|_{\F}^2 \geq c_1p_dr_{d-1}.
    \end{equation*}
    Therefore, we have proved Theorem \ref{thm:lower}.

\subsection{Proof of Proposition \ref{pr:Tucker-train}}\label{sec:proof_Tucker-train}

Define $\widetilde{G}_1 \in \bbR^{p \times r_1}, \widetilde{\bfc{G}}_k \in \bbR^{r_{k-1} \times p \times r_k}$, $\widetilde{G}_d \in \bbR^{p \times r_{d-1}}$ such that 
	\begin{equation*}
	\begin{split}
	\widetilde{G}_{1, [i, l]} &= (G_1(i))_l, \quad \forall i \in [p], l \in [r_1], \\
	\widetilde{\bfc{G}}_{k, [j, i, l]} &= \left(G_k(i, e_j^{(r_{k-1})})\right)_l, \quad \forall i \in [p], j \in [r_{k-1}], l \in [r_k], 2 \leq k \leq d-1,\\
	\widetilde{G}_{d, [i, l]} &= G_d(i, e_l^{(r_{d-1})}), \forall i \in [p], l \in [r_{d-1}]
	\end{split}
	\end{equation*}
	where $e_i^{(k)}$ is the $i$-th canonical basis of $\bbR^{k}$. Then
	\begin{equation*}
	\widetilde{P}_1(X_{t+1}) = \widetilde{G}_{1, [X_{t+1}, :]}^\top \in \bbR^{r_1},
	\end{equation*}
	\begin{equation*}
	\begin{split}
	\widetilde{P}_2(X_{t+1}, X_{t+2}) =& G_2\left(X_{t+2}, \widetilde{P}_1(X_{t+1})\right) \stackrel{\text{linear map}}{=} \sum_{j=1}^{r_1}G_2(X_{t+2}, e_j^{(r_1)})\left(\widetilde{P}_1(X_{t+1})\right)_j = \left(\widetilde{G}_{1, [X_{t+1}, :]}\widetilde{\bfc{G}}_{2, [:, X_{t+2}, :]}\right)^\top.
	\end{split}
	\end{equation*}
	By induction, for any $2 \leq k \leq d-1$,
	\begin{equation*}
	\begin{split}
	\widetilde{P}_k(X_{t+1}, \dots, X_{t+k}) &= G_k(X_{t+k}, \widetilde{P}_{k-1}(X_{t+1}, \dots, X_{t+k-1}))\\ &\stackrel{\text{linear map}}{=}  \sum_{j=1}^{r_{k-1}}G_k(X_{t+k}, e_j^{(r_{k-1})})\left(\widetilde{P}_{k-1}(X_{t+1}, \dots, X_{t+k-1})\right)_j\\
	&=\widetilde{\bfc{G}}_{k, [:, X_{t+k}, :]}^\top\widetilde{P}_{k-1}(X_{t+1}, \dots, X_{t+k-1}) \\
	&= \left(\widetilde{G}_{1, [X_{t+1}, :]}\widetilde{\bfc{G}}_{2, [:, X_{t+2}, :]}\cdots\widetilde{\bfc{G}}_{k, [:, X_{t+k}, :]}\right)^\top
	\end{split}
	\end{equation*}
	and
	\begin{equation*}
	\begin{split}
	\bbP\left(X_{t+d} | X_{t+1}, \dots, X_{t+d-1}\right) =& G_d(X_{t+d}, \widetilde{P}_{d-1}(X_{t+1}, \dots, X_{t+d-1}))\\
	=& \widetilde{P}_{d-1}^\top(X_{t+1}, \dots, X_{t+d-1})\widetilde{G}_{d,[X_{t+d,:}]}^\top\\
	=& \widetilde{G}_{1, [X_{t+1}, :]}\widetilde{\bfc{G}}_{2, [:, X_{t+2}, :]}\cdots\widetilde{\bfc{G}}_{d-1, [:, X_{t+d-1}, :]}\widetilde{G}_{d,[X_{t+d,:}]}^\top.
	\end{split}
	\end{equation*}
	Therefore, $$\bfc{P} = \llbracket \widetilde{G}_1, \widetilde{\bfc{G}}_2, \ldots, \widetilde{\bfc{G}}_{d-1}, \widetilde{G}_d\rrbracket$$ and has TT-rank $(r_1, \dots, r_{d-1})$.

\subsection{Proof of Proposition \ref{thm:high_order_markov_chain}}\label{sec:proof_high_order_markov_chain}
	Let $\bfc{Z} = \widehat{\bfc{P}}^{\rm{emp}} - \bfc{P}$, then $\bbE\bfc{Z} = 0$.
	Let 
	$$\bfc{T}_{i_1, \dots, i_d}^{(k)} = 1_{\{X(i_1, \dots, i_{d-1}; k) = i_d\}}, \quad \forall 1 \leq k \leq n; 1 \leq i_1, \dots, i_d \leq p$$ and
	$$\bfc{Z}_{i_1, \dots, i_d}^{(k)} = \bfc{T}_{i_1, \dots, i_d}^{(k)} - \bbP\left(i_d|i_1, \dots, i_{d-1}\right), \quad \forall 1 \leq k \leq n; 1 \leq i_1, \dots, i_d \leq p.$$
	Then $\bbE \bfc{Z}^{(k)} = 0$. Moreover, by definition, for any $1 \leq j \leq d-1$, the rows of $\left[\bfc{Z}^{(k)}\right]_j \in \bbR^{p^j \times p^{d-j}}$ are independent, and there exists a partition $\{\Omega_1^{(j)}, \dots, \Omega_{p^{d-j-1}}^{(j)}\}$ of $\{1, \dots, p^{d-j}\}$ satisfying $\left|\Omega_1^{(j)}\right| = \dots = \left|\Omega_{p^{d-j-1}}^{(j)}\right| = p$, such that $\left(\left[\bfc{Z}^{(k)}\right]_j\right)_{[:, \Omega_1^{(j)}]}, \dots, \left(\left[\bfc{Z}^{(k)}\right]_j\right)_{[:, \Omega_{p^{d-j-1}}^{(j)}]}$ are independent and $$\sum_{l \in \Omega_i^{(j)}}\left(\left[\bfc{T}^{(k)}\right]_{j}\right)_{m,l} = 1, \quad \forall 1 \leq m \leq p^{j}, 1 \leq k \leq n.$$
	Therefore, 
	\begin{equation*}
		\sum_{l \in \Omega_i^{(j)}}\left|\left(\left[\bfc{Z}^{(k)}\right]_{j}\right)_{m,l}\right| \leq \sum_{l \in \Omega_i^{(j)}}\left(\left[\bfc{T}^{(k)}\right]_{j}\right)_{m,l} + \bbE\sum_{l \in \Omega_i^{(j)}}\left(\left[\bfc{T}^{(k)}\right]_{j}\right)_{m,l} = 2, \quad \forall 1 \leq m \leq p^{j}, 1 \leq k \leq n.
	\end{equation*}
	For any fixed $x_1 \in \bbR^{p^j}$ and $x_2 \in \bbR^{p^{d-j}}$ satisfying $\|x_1\|_2 = 1$ and $\|x\|_2 = 1$, we have
	\begin{equation*}
		\left|\sum_{l \in \Omega_i^{(j)}}\left(\left[\bfc{Z}^{(k)}\right]_{j}\right)_{m,l}(x_2)_l\right| \leq \max_{l \in \Omega_i^{(j)}}(x_2)_l \sum_{l \in \Omega_i^{(j)}}\left|\left(\left[\bfc{Z}^{(k)}\right]_{j}\right)_{m,l}\right| \leq 2\max_{l \in \Omega_i^{(j)}}(x_2)_l \leq 2\left\|(x_2)_{\Omega_i^{(j)}}\right\|_2.
	\end{equation*}
	By \cite[Exercise 2.4]{wainwright2019high}, $\sum_{l \in \Omega_i^{(j)}}\left(\left[\bfc{Z}^{(k)}\right]_{j}\right)_{m,l}(x_2)_l$ is $2\left\|(x_2)_{\Omega_i^{(j)}}\right\|_2$-sub-Gaussian. Therefore, 
	\begin{equation*}
		x_1^\top \left[\bfc{Z}^{(k)}\right]_jx_2 = \sum_{m =1}^{p^j}(x_1)_m\sum_{i=1}^{p^{d-j-1}}\left(\sum_{l \in \Omega_i^{(j)}}\left(\left[\bfc{Z}^{(k)}\right]_{j}\right)_{m,l}(x_2)_l\right)
	\end{equation*}
	is $\left(\sum_{m =1}^{p^j}(x_1)_m^2\sum_{i=1}^{p^{d-j-1}}4\left\|(x_2)_{\Omega_i^{(j)}}\right\|_2^2\right)^{1/2} = 2\|x_1\|_2\|x_2\|_2 = 2$-sub-Gaussian. Notice that $\bfc{Z} = \frac{1}{n}\sum_{k=1}^{n}\bfc{Z}^{(k)}$, the Hoeffding bound \citep[Proposition 2.5]{wainwright2019high} shows that
	\begin{equation*}
		\bbP\left(\left|x_1^\top [\bfc{Z}]_jx_2\right| \geq t\right) \leq 2\exp\left(-\frac{nt^2}{8}\right), \quad \forall t \geq 0.
	\end{equation*}
	Therefore, for any fixed $U \in \mathbb{O}_{p^j, r_j}, V \in \mathbb{O}_{p^{d-j}, pr_{j+1}}$, $x \in \bbR^{r_j}, y \in \bbR^{pr_{j+1}}$ with $\|x\|_2 =1$ and $\|y\|_2 = 1$,
	\begin{equation*}
	\bbP\left(\left|x^\top U^\top [\bfc{Z}]_jV^\top y\right| \geq t\right) \leq 2\exp\left(-\frac{nt^2}{8}\right), \quad \forall t \geq 0.
	\end{equation*}
	Similarly to the proof of \eqref{ineq24}, with probability at least $1 - Ce^{-cp}$, for all $1 \leq k \leq d-1$,
	\begin{equation*}
		\left\|\widehat{U}_{k}^{(0)\top}(I_{p} \otimes \widehat{U}_{k-1}^{(0)\top})\cdots(I_{p^{k-1}} \otimes \widehat{U}_1^{(0)\top})[\bfc{Z}]_{k}(\widehat{V}_d^{(1)} \otimes I_{p^{d-k-1}})\cdots (\widehat{V}_{k+2}^{(1)} \otimes I_{p})\right\| \leq C\sqrt{\frac{\sum_{i=1}^dp_ir_ir_{i-1}}{n}}.
	\end{equation*}
	Similarly, with probability at least $1 - Ce^{-cp}$,
	\begin{equation*}
		\left\|[\bfc{Z}]_1(\widehat{V}_d^{(1)} \otimes I_{p^{d-2}})\cdots (\widehat{V}_{3}^{(1)} \otimes I_{p})\widehat{V}_{2}^{(1)}\right\| \leq C\sqrt{\frac{\sum_{i=1}^dp_ir_ir_{i-1}}{n}}.
	\end{equation*}
	Notice that $\|X\|_{\rm F} \leq \sqrt{r}\|X\|$ if rank$(X) = r$, by the previous two inequalities and Theorem \ref{thm:upper_deterministic}, we know that with probability at least $1 - Ce^{-cp}$,
	\begin{equation*}
		\left\|\widehat{\bfc{P}}^{(1)} - \bfc{P}\right\|_{\rm F}^2 \leq C\left(\max_{1 \leq i \leq d-1}r_i\right)\frac{\sum_{i=1}^dp_ir_ir_{i-1}}{n}.
	\end{equation*}
	Finally, by the definition of $\widehat{\bfc{P}}$, we have
	\begin{equation*}
		\left\|\widehat{\bfc{P}} - \bfc{P}\right\|_{\rm F} \leq \left\|\widehat{\bfc{P}}^{(1)} - \bfc{P}\right\|_{\rm F} + \left\|\widehat{\bfc{P}}^{(1)} - \widehat{\bfc{P}}\right\|_{\rm F} \leq 2\left\|\widehat{\bfc{P}}^{(1)} - \bfc{P}\right\|_{\rm F},
	\end{equation*}
	which has finished the proof of Theorem \ref{thm:high_order_markov_chain}.

\subsection{Proof of Lemma \ref{lm:R_0}}\label{sec:proof-R_0}
	By symmetry, we only need to prove \eqref{eq:realignment_forward}. By definition, \eqref{eq:realignment_forward} holds for $k = 1$. Suppose it holds for $k = j$. For $k = j +1$, since $S_{j+1} \in \bbR^{(r_jp_{j+1}) \times (p_{j+2}\cdots p_d)}$ is realigned from $\widetilde{S}_{j} = M^{\top}_{j}S_{j} \in \bbR^{r_j \times (p_{j+1}\cdots p_d)}$, Lemma \ref{lm:realignment} that $S_{j+1} = (I_{p_{j+1}} \otimes \widetilde{S}_{j})A^{(p_{j+1}, p_{j+2}\cdots p_d)}$, where the realignment matrix $A^{(i, j)}$ is defined in \eqref{eq:realignment}.
	Therefore,
	\begin{equation*}
	\begin{split}
	S_{j+1} =& \left(I_{p_{j+1}} \otimes \widetilde{S}_{j}\right)A^{(p_{j+1}, p_{j+2}\cdots p_d)}\\ =& \left(I_{p_{j+1}} \otimes M^{\top}_{j}S_{j}\right)A^{(p_{j+1}, p_{j+2}\cdots p_d)}\\ =& \left(I_{p_{j+1}} \otimes M^{\top}_{j}\right)\left(I_{p_{j+1}} \otimes S_{j}\right)A^{(p_{j+1}, p_{j+2}\cdots p_d)}\\ =& \left(I_{p_{j+1}} \otimes M^{\top}_{j}\right)\left(I_{p_{j+1}} \otimes \left((I_{p_j} \otimes M^{\top}_{j - 1})\cdots (I_{p_2\cdots p_j} \otimes M^{\top}_{1})[\bfc{T}]_j\right)\right)A^{(p_{j+1}, p_{j+2}\cdots p_d)}\\
	=& \left(I_{p_{j+1}} \otimes M^{\top}_{j}\right)\left(I_{p_{j+1}} \otimes (I_{p_j} \otimes M^{\top}_{j - 1})\right)\cdots \left(I_{p_{j+1}} \otimes (I_{p_2\cdots p_j} \otimes M^{\top}_{1})\right)\left(I_{p_{j+1}} \otimes [\bfc{T}]_j\right)A^{(p_{j+1}, p_{j+2}\cdots p_d)}\\
	=& \left(I_{p_{j+1}} \otimes M^{\top}_{j}\right)\left(I_{p_jp_{j+1}} \otimes M^{\top}_{j-1}\right)\cdots\left(I_{p_2\cdots p_{j+1}} \otimes M^{\top}_{1}\right)[\bfc{T}]_{j+1}.
	\end{split}
	\end{equation*}
	The third equation and the fifth equation hold since $(A \otimes B)(C \otimes D) = (AC) \otimes (BD)$; the last equation holds since $Y_{j+1} = \left(I_{p_{j+1}} \otimes Y_j\right)A^{(p_{j+1}, p_{j+2}\cdots p_d)}$ and $A \otimes (B \otimes C) = (A \otimes B) \otimes C$.
	
	Also notice that $\widetilde{S}_k = M_k^\top S_k$, we have finished the proof of \eqref{eq:realignment_forward}.

\subsection{Technical Lemmas}\label{sec:technical-lemmas}

	We collect the additional technical lemmas in this section.
	 	\begin{lemma}\label{lm:singular_value_lower_bound}
	 		\item(1) Suppose $A \in \bbR^{m_1 \times m_2}, B \in \bbR^{m_2 \times m_3}$, where $m_1 \geq m_2$. Then 
	 		$$ s_{\min\{m_2, m_3\}}(AB) \geq  s_{m_2}(A) s_{\min\{m_2, m_3\}}(B).$$
	 	
	 		\item(2) Suppose $A \in \bbR^{m \times p_1}, B \in \bbR^{n \times p_2}, X \in \bbR^{p_1 \times p_2}$, rank$(X) = r, p_1 \geq m, p_2 \geq n$. If $X = U_1MV_1^\top$, where $U_1 \in \mathbb{O}_{p_1, m}$ and $V_1 \in \mathbb{O}_{p_2, n}$, then $$\sigma_r(AXB) \geq  s_{\min}(AU_1)\sigma_r(X) s_{\min}(V_1^\top B).$$
	 	\end{lemma}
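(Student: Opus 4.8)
The plan is to derive both inequalities from the Courant--Fischer (min--max) characterization of singular values, combined with the elementary fact that a matrix $A$ with at least as many rows as columns satisfies $\|Ay\|_2 \ge s_{\min}(A)\,\|y\|_2$ for every vector $y$, because $A^\top A \succeq s_{\min}(A)^2 I$. The single nontrivial input I will use is that for $N \in \bbR^{p\times q}$ and $1 \le k \le \min\{p,q\}$,
\begin{equation*}
 s_k(N) = \max_{\dim S = k}\ \min_{x \in S,\ \|x\|_2 = 1} \|Nx\|_2,
\end{equation*}
where $S$ ranges over $k$-dimensional subspaces of $\bbR^q$.

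For Part (1) I would set $k = \min\{m_2,m_3\}$, note that $k \le \min\{m_1,m_3\}$ so $s_k(AB)$ is well defined, and observe that for any unit vector $x$ lying in a $k$-dimensional subspace $S \subseteq \bbR^{m_3}$ one has $\|ABx\|_2 = \|A(Bx)\|_2 \ge s_{m_2}(A)\,\|Bx\|_2$, since $A$ has $m_1 \ge m_2$ rows. Taking the minimum over $x \in S$ and then the maximum over such $S$, and applying the displayed min--max formula on both ends, yields $s_k(AB) \ge s_{m_2}(A)\, s_k(B)$, which is the assertion.

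For Part (2) I would first substitute the factorization to obtain $AXB = (AU_1)\,M\,(V_1^\top B)$; write $C := AU_1$ and $E := V_1^\top B$ (both square), so that $s_{\min}(C) = s_{\min}(AU_1)$, $s_{\min}(E) = s_{\min}(V_1^\top B)$, and $\sigma_r(M) = \sigma_r(X)$ with $r = \rank(X) \le \min\{m,n\}$, because $U_1$ and $V_1$ have orthonormal columns. If either $C$ or $E$ is singular the right-hand side vanishes and the bound is trivial, so assume both invertible. Let $\mathcal M \subseteq \bbR^{n}$ be the row space of $M$, which has dimension $r$ and on which $\|Mv\|_2 \ge \sigma_r(M)\,\|v\|_2$, and take the test subspace $S = E^{-1}(\mathcal M)$, still $r$-dimensional since $E$ is invertible. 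For unit $x \in S$ one has $Ex \in \mathcal M$, so
\begin{equation*}
 \|C M E x\|_2 \ \ge\ s_{\min}(C)\,\|M E x\|_2 \ \ge\ s_{\min}(C)\,\sigma_r(M)\,\|E x\|_2 \ \ge\ s_{\min}(C)\,\sigma_r(M)\,s_{\min}(E).
\end{equation*}
Feeding $S$ into the min--max formula for $\sigma_r(CME)$ gives the stated inequality.

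The only part that is more than bookkeeping — and what I regard as the crux — is the choice of test subspace in Part (2): the min--max formula must be fed a subspace on which $M$ acts like its smallest nonzero singular value, and because $E$ sits between $x$ and $M$ the right candidate is the $E$-preimage of the row space of $M$; the degenerate case handles the only situation in which this preimage could fail to be $r$-dimensional. Everything else — verifying $r \le \min\{m,n\}$ from $X = U_1 M V_1^\top$, checking $k \le \min\{m_1,m_3\}$ in Part (1), and the invariance of singular values under multiplication by matrices with orthonormal columns — is routine. As an alternative to the subspace argument one could take a compact SVD $M = P\Sigma Q^\top$ with $P \in \mathbb{O}_{m,r}$, $Q\in\mathbb{O}_{n,r}$ and apply Part (1) twice to $(CP)\,\Sigma\,(Q^\top E)$, but the preimage argument is cleaner.
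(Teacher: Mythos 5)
Your proof is correct. Part (1) is essentially the paper's argument: the paper first strips the outer orthonormal factors via SVDs of $A$ and $B$ and then bounds $\|\Sigma_A V_A^\top U_B \Sigma_B x\|_2$ for unit $x$, which is exactly your min--max argument specialized to the case where the matrix has $\min\{m_2,m_3\}$ columns (so the maximum over subspaces is trivial); your version is marginally cleaner since it needs no SVDs. Part (2) is where you genuinely diverge: the paper writes the compact SVD $X = U\Sigma V^\top$, represents $U = U_1 L$ and $V = V_1 R$ with $L^\top L = R^\top R = I_r$, and then reduces to a two-sided application of the Part (1)-type product bound to $(AU_1)(L\Sigma R^\top)(V_1^\top B)$ --- precisely the ``alternative'' you sketch in your last sentence. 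Your main route instead feeds Courant--Fischer the test subspace $E^{-1}(\mathcal M)$, the $E$-preimage of the row space of $M$, and handles the singular case separately. Both work; the paper's reduction avoids the case split and reuses Part (1) as a black box, while yours makes transparent exactly which $r$-dimensional subspace witnesses the bound and why $M$ acts like $\sigma_r(M)$ on it. No gaps in either part.
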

	 	\begin{proof}[Proof of Lemma \ref{lm:singular_value_lower_bound}]
	 		(1) Consider the SVD decomposition $A = U_A\Sigma_AV_A^\top, B = U_B\Sigma_BV_B^\top$, where $U_A \in \mathbb{O}_{m_1, m_2}, V_A \in \mathbb{O}_{m_2}, U_B \in \mathbb{O}_{m_2, \min\{m_2, m_3\}}, V_B \in \mathbb{O}_{\min\{m_2, m_3\}, m_3}$, $\Sigma_A = \diag(\sigma_1(A), \dots,  s_{m_2}(A))$ and $\Sigma_B = \diag( s_{1}(B), \dots,  s_{\min\{m_2, m_3\}}(B))$ are diagonal matrices with nonnegative diagonal entries.
	 		Then 
	 		\begin{equation}
	 			 s_{\min\{m_2, m_3\}}(AB) =  s_{\min\{m_2, m_3\}}(U_A\Sigma_AV_A^\top U_B\Sigma_BV_B^\top) =  s_{\min\{m_2, m_3\}}(\Sigma_AV_A^\top U_B\Sigma_B).
	 		\end{equation}
	 		For any $x \in \bbR^{\min\{m_2, m_3\}}$ satisfying $\|x\|_2 = 1$, we have
	 		\begin{equation*}
	 			\|\Sigma_AV_A^\top U_B\Sigma_Bx\|_2 \geq  s_{m_2}(A)\|V_A^\top U_B\Sigma_Bx\|_2 =  s_{m_2}(A)\|\Sigma_Bx\|_2 \geq  s_{m_2}(A) s_{\min\{m_2, m_3\}}(B).
	 		\end{equation*}
	 		Therefore
	 		\begin{equation*}
	 			 s_{\min\{m_2, m_3\}}(AB) =  s_{\min\{m_2, m_3\}}(\Sigma_AV_A^\top U_B\Sigma_B) \geq  s_{m_2}(A) s_{\min\{m_2, m_3\}}(B).
	 		\end{equation*}
	 		(2) Consider the SVD decomposition $X = U\Sigma V^\top$, where $U \in \mathbb{O}_{p_1, r}, V \in \mathbb{O}_{p_2, r}$ and $\Sigma$ is a diagonal matrix. Then we know that there exist two matrices $L \in \bbR^{m \times r}$ and $R \in \bbR^{n \times r}$ satisfying $U = U_1L$ and $V = V_1R$. Moreover,
	 		\begin{equation*}
	 			L^\top L = L^\top U_1^\top U_1L = U^\top U = I_r, \quad R^\top R = R^\top V_1^\top V_1R = V^\top V = I_r. 
	 		\end{equation*}
	 		Therefore,
	 		\begin{equation*}
	 			\sigma_r(AXB) = \sigma_r(AU_1L\Sigma R^\top V_1^\top B) \geq  s_{\min}(AU_1)\sigma_r(L\Sigma R^\top) s_{\min}(V_1^\top B) =  s_{\min}(AU_1)\sigma_r(X) s_{\min}(V_1^\top B).
	 		\end{equation*}
	 	\end{proof}
     \begin{lemma}\label{lm:concentration_gaussian}
     	Suppose $Z$ is a matrix with independent zero-mean $\sigma$-sub-Gaussian entries, $d$ is a fixed number, $r_0 = r_d = 1$.\\
     	(1) Suppose $Z \in \bbR^{p \times q}$, $A \in \bbR^{m \times p}, B \in \bbR^{q \times n}$ satisfy $\|A\|, \|B\| \leq 1$, $m \leq p, n \leq q$. Then 
     	\begin{equation}\label{ineq30}
     		\bbP\left(\|AZB\| \geq 2\sigma\sqrt{m + t}\right) \leq 2\cdot 5^n\exp\left[-c\min\left(\frac{t^2}{m}, t\right)\right].
     	\end{equation}
     	\begin{equation}\label{ineq31}
     		\bbP\left(\|AZB\|_{\F} \geq \sigma\sqrt{mn + t}\right) \leq 2\exp\left[-c\min\left(\frac{t^2}{mn}, t\right)\right].
     	\end{equation}
     	(2) Suppose $Z \in \bbR^{(p_1\cdots p_k) \times m}$, $2 \leq k \leq d-1$. Then 
     	\begin{equation}\label{ineq24}
     	\begin{split}
     	\max_{U_i \in \bbR^{(p_ir_{i-1}) \times r_i}\atop \|U_i\| \leq 1}\left\|(I_{p_k} \otimes U^{\top}_{k - 1})\cdots (I_{p_2\cdots p_k} \otimes U^{\top}_{1})Z\right\| \leq C\sigma\sqrt{\sum_{i = 1}^{k-1}p_ir_{i-1}r_i + p_kr_{k-1} + m}.
     	\end{split}
     	\end{equation}
     	with probability at least $1 - C\exp(-c(\sum_{i = 1}^{k-1}p_ir_{i-1}r_i + p_kr_{k-1} + m))$.
     	\\
     	(3) Suppose $Z \in \bbR^{(p_1\cdots p_k) \times (p_{k+1}\cdots p_d)}$, $2 \leq k \leq d - 2$. Then
     	\begin{equation}\label{norm_noise}
     		\begin{split}
     		&\max_{(U_1, \dots, V_d) \in \mathcal{A}}\left\|U^{\top}_{k}(I_{p_k} \otimes U^{\top}_{k - 1})\cdots (I_{p_2\cdots p_k} \otimes U^{\top}_{1})Z(V_d \otimes I_{p_{k+1}\dots p_{d-1}})\cdots (V_{k + 2} \otimes I_{p_{k+1}})\right\| \leq C\sigma\sqrt{\sum_{i=1}^d p_ir_{i-1}r_i}
     		\end{split}
     	\end{equation}
     	with probability at least $1 - C\exp(-c\sum_{i=1}^d p_ir_{i-1}r_i)$. Here, 
     	\begin{equation}\label{eq:def_set}
     		\mathcal{A} = \{(U_1, \dots, U_{k}, V_{k+2}, \dots, V_d): U_i \in \bbR^{(p_ir_{i-1}) \times r_i}, \|U_i\| \leq 1, V_j \in \bbR^{(p_ir_i) \times r_{i-1}}, \|V_j\| \leq 1\}.
     	\end{equation}
     	\\
     	(4) Suppose $Z \in \bbR^{(p_1\cdots p_{d-1}) \times p_d}$. Then with probability at least $1 - C\exp(-c\sum_{i=1}^d p_ir_{i-1}r_i)$,
     	\begin{equation}\label{ineq:f_norm_noise_2}
     	\begin{split}
     	\max_{U_i \in \bbR^{(p_ir_{i-1}) \times r_i}, \|U_i\| \leq 1}\left\|U^{\top}_{d-1}(I_{p_{d-1}} \otimes U^{\top}_{d - 2})\cdots (I_{p_2\cdots p_{d-1}} \otimes U^{\top}_{1})Z\right\|_{\F} \leq C\sigma\sqrt{\sum_{i=1}^d p_ir_{i-1}r_i}.
     	\end{split}
     	\end{equation}
     	\\
     	(5) Suppose $Z \in \bbR^{(p_1\cdots p_k) \times (p_{k+1}\cdots p_d)}$, $2 \leq k \leq d - 2$. Then
     	\begin{equation}\label{ineq:f_norm_noise}
     		\begin{split}
     		&\max_{(U_1, \dots, V_d) \in \mathcal{A}}\left\|U^{\top}_{k}(I_{p_k} \otimes U^{\top}_{k - 1})\cdots (I_{p_2\cdots p_k} \otimes U^{\top}_{1})Z(V_d \otimes I_{p_{k+1}\dots p_{d-1}})\cdots (V_{k + 2} \otimes I_{p_{k+1}})\right\|_{\F} \leq C\sigma\sqrt{\sum_{i=1}^d p_ir_{i-1}r_i}
     		\end{split}
     	\end{equation}
     	with probability at least $1 - C\exp(-c\sum_{i=1}^d p_ir_{i-1}r_i)$. Here, $\mathcal{A}$ is defined in \eqref{eq:def_set}.
     \end{lemma}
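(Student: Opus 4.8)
The plan is to derive all five statements from one principle: any ``uniform over a family of structured matrices'' bound is reduced to the basic two--sided concentration inequality of Part~(1) via $\varepsilon$--net arguments over the parameter matrices $U_i$ (and $V_j$), with the discretization error controlled by the observation that perturbing a single factor of a sequential Kronecker product leaves the resulting (rank--one) test matrix small in Frobenius norm.

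\textbf{Part (1).} I would vectorize: $\mathrm{vec}(AZB)=(B^\top\otimes A)\mathrm{vec}(Z)$, so $\|AZB\|_{\F}=\|(B^\top\otimes A)\mathrm{vec}(Z)\|_2$ is a linear image of a vector with independent $\sigma$-sub-Gaussian coordinates. Since $\|B^\top\otimes A\|_{\F}^2=\|A\|_{\F}^2\|B\|_{\F}^2\le mn$ and $\|B^\top\otimes A\|=\|A\|\,\|B\|\le 1$, the Hanson--Wright/Bernstein-type deviation bound for $\|Wx\|_2^2$ (as in \cite{vershynin2010introduction}) yields \eqref{ineq31}. For the operator-norm bound \eqref{ineq30}, fix a $\tfrac12$-net $\mathcal N$ of the unit sphere in $\bbR^n$ with $|\mathcal N|\le 5^n$; for each $v\in\mathcal N$, $Z(Bv)$ has independent $\sigma\|Bv\|\le\sigma$-sub-Gaussian entries, so $\|A\,Z(Bv)\|_2^2$ is a sub-Gaussian quadratic form with mean $\le\sigma^2\|A\|_{\F}^2\le\sigma^2 m$ and the scalar deviation inequality gives $\bbP(\|AZBv\|_2^2\ge\sigma^2(m+t))\le 2\exp(-c\min(t^2/m,t))$; a union bound over $\mathcal N$ together with $\|AZB\|\le 2\max_{v\in\mathcal N}\|AZBv\|_2$ finishes it.

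\textbf{Parts (2)--(5).} For a \emph{fixed} admissible configuration, the left factor $A:=(I_{p_k}\otimes U_{k-1}^\top)\cdots(I_{p_2\cdots p_k}\otimes U_1^\top)$ satisfies $\|A\|\le 1$ and $\|A\|_{\F}^2\le p_kr_{k-1}$ (it has $p_kr_{k-1}$ rows and all singular values $\le1$); in Parts (3),(5) the right factor likewise has operator norm $\le1$ and squared Frobenius norm $\le p_{k+1}r_k$. Hence Part~(1) (or \eqref{ineq31} for the Frobenius versions) applied with these $A,B$ gives, for each fixed configuration, the right order $\sigma\sqrt{\text{free dimensions}}$ with a two-sided tail. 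To make this uniform I would take operator-norm $\varepsilon$-nets of each ball $\{U_i:\|U_i\|\le 1\}\subset\bbR^{p_ir_{i-1}\times r_i}$, of cardinality $\le(C/\varepsilon)^{p_ir_{i-1}r_i}$. The structural point is that replacing one factor $U_j\mapsto U_j'$ with $\|U_j-U_j'\|\le\varepsilon$ changes the tested matrix $A^\top u\,v^\top$ (for unit $u,v$) by a matrix of Frobenius norm $\le\varepsilon$ -- the intervening identity factors and the unit vectors only contract -- so the induced change in the bilinear form $\langle A^\top uv^\top,Z\rangle$ is a $\sigma\varepsilon$-sub-Gaussian variable rather than order $\varepsilon\|Z\|$. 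Chaining over a geometric sequence of scales then makes the level contributions summable, and the total supremum is $\lesssim\sigma\sqrt{\sum_i p_ir_{i-1}r_i+\text{free dims}}$ with probability $1-C\exp(-c(\sum_i p_ir_{i-1}r_i+\text{free dims}))$; here the free dimensions are $p_kr_{k-1}+m$ in (2), nothing beyond $\sum_i p_ir_{i-1}r_i$ in (3) and (5) (the output is only $r_k\times r_{k-1}$), and $p_dr_{d-1}\le\sum_i p_ir_{i-1}r_i$ in (4).

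\textbf{Main obstacle.} The delicate step is exactly this uniformization. A single-scale net loses a $\log(\min_i p_i)$ factor, because the operator-norm Lipschitz constant of the whole product tested against $Z$ is of order $\|Z\|\sim\sigma\sqrt{\prod_i p_i}$, which would force $\varepsilon$ to be polynomially small; removing the logarithm and obtaining the sharp constant requires exploiting the sub-Gaussian smallness of single-factor perturbations and chaining over scales. A secondary bookkeeping matter is to verify, in each part, that the partially contracted noise $(I_{p_2\cdots p_j}\otimes U_{j-1}^\top)\cdots(I\otimes U_1^\top)Z$ retains the block-independence structure (independent columns, and independent $\sigma$-sub-Gaussian blocks of rows indexed by the remaining modes), since this is what lets the reduced object be fed back into Part~(1) and makes the chaining step close.
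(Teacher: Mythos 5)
Your Part (1) follows essentially the same route as the paper: vectorize, bound the second moment by $\|A\|_{\F}^2\|B\|_{\F}^2\le mn$ and the operator norm by $1$, apply Hanson--Wright for a fixed direction, and union-bound over a $\tfrac12$-net of size $5^n$ for the operator norm. That part is fine.

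For Parts (2)--(5) you identify the correct reduction (feed the structured factors into Part (1), then uniformize over operator-norm $\varepsilon$-nets of the $U_i$ and $V_j$ balls with $\log$-cardinality $\asymp \sum_i p_ir_{i-1}r_i$), but the step you single out as the ``main obstacle'' is mischaracterized, and your proposed fix is both unnecessary and incomplete. You claim a single-scale net forces $\varepsilon$ polynomially small because the Lipschitz constant is of order $\|Z\|\sim\sigma\sqrt{\prod_ip_i}$, so that chaining over scales is needed to avoid a logarithm. This is false: the paper uses a self-bounding (homogeneity) trick. Writing the maximizer's value $M$ as the net point's value plus a telescoping sum in which exactly one factor is replaced by a difference $U_j^*-U_j^{(j)}$ of operator norm $\le\varepsilon$, each error term is $\varepsilon$ times the value of another \emph{admissible} configuration, hence $\le\varepsilon M$; see \eqref{ineq5}. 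This yields $M\le N+(d-1)\varepsilon M$, i.e.\ $M\le N/(1-(d-1)\varepsilon)$, and $\varepsilon=\tfrac{1}{2(d-1)}$ is a constant --- no logarithm is lost and no chaining is required. Your alternative chaining sketch could presumably be completed, but as written it has real gaps: (i) you only indicate how to control increments of the bilinear form $\langle A^\top uv^\top B^\top,Z\rangle$ for fixed unit vectors $u,v$, and do not explain how the supremum over $u,v$ (which defines the operator norm in \eqref{ineq24} and \eqref{norm_noise}) is interleaved with the chaining over the $U_i$; and (ii) Dudley-type chaining most directly gives an expectation bound, whereas the lemma asserts a tail probability $1-C\exp(-c\sum_ip_ir_{i-1}r_i)$, so you would additionally need a concentration argument or a chaining-with-tails version. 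I would recommend replacing the chaining step with the single-scale self-bounding argument above, which closes all five parts directly from your (correct) Part (1).
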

	 \begin{proof}[Proof of Lemma \ref{lm:concentration_gaussian}] W.O.L.G., assume $\sigma = 1$.\\
	 	(1) For fixed $x \in \bbR^n$ satisfying $\|x\|_2 = 1$, we have $AZBx = (x^\top B^\top \otimes A)\text{vec}(Z)$. Since $Z_{ij}$ is $1$-sub-Gaussian, we know that $\Var(Z_{ij}) \leq 1$. In addition,
	 	\begin{equation}\label{ineq22}
	 	\begin{split}
	 	\bbE \|(x^\top B^\top \otimes A)\text{vec}(Z)\|_{2}^2 =& \bbE\left[\text{tr}\left(\text{vec}(Z)^\top(x^\top B^\top \otimes A)^\top (x^\top B^\top \otimes A)\text{vec}(Z)\right)\right]\\
	 	=& \text{tr}\left[\bbE\left((x^\top B^\top \otimes A)^\top (x^\top B^\top \otimes A)\text{vec}(Z)\text{vec}(Z)^\top\right)\right]\\
	 	=& \text{tr}\left[(x^\top B^\top \otimes A)^\top (x^\top B^\top \otimes A)\bbE\left(\text{vec}(Z)\text{vec}(Z)^\top\right)\right]\\
	 	\leq& \text{tr}\left((x^\top B^\top \otimes A)^\top (x^\top B^\top \otimes A)\right)\\
	 	=& \left\|x^\top B^\top \otimes A\right\|_{\F}^2 = \|Bx\|_2^2\|A\|_{\F}^2 \leq \|x\|_2^2\|A\|_{\F}^2 \\
	 	\leq& m.
	 	\end{split}
	 	\end{equation}
	 	The first inequality holds since $\bbE\left(\text{vec}(Z)\text{vec}(Z)^\top\right)$ is a diagonal matrix with diagonal entries $\Var(Z_{ij}) \leq 1$; the last inequality is due to $\|A\|_{\F} \leq \min\{m, p\}\|A\|_2 \leq m$.\\
	 	By Hanson-Wright inequality, we have
	 	\begin{equation*}
	 	\bbP\left(\|AZBx\|_{2}^2 - m \geq t\right) \leq 2\exp\left[-c\min\left(\frac{t^2}{\|(Bxx^\top B^\top) \otimes (A^\top A)\|_{\F}^2}, \frac{t}{\|(Bxx^\top B^\top) \otimes (A^\top A)\|}\right)\right].
	 	\end{equation*}
	 	Since $\|x\|_2 = 1$ and $\|A\|, \|B\| \leq 1$, 
	 	\begin{equation*}
	 	    \begin{split}
	 	    \|(Bxx^\top B^\top) \otimes (A^\top A)\|_{\F}^2 =& \|Bxx^\top B^\top\|_{\F}^2\|A^\top A\|_{\F}^2 = (x^\top B^\top Bx)^2\|A^\top A\|_{\F}^2\\ \leq& (x^\top x)^2\|A^\top A\|_{\F}^2 = \sum_{i=1}^{\min\{m, p\}}\sigma_i^4(A) \leq m,
	 	    \end{split}
	 	\end{equation*}
	 	\begin{equation*}
	 	\|(Bxx^\top B^\top) \otimes (A^\top A)\| \leq \|Bxx^\top B^\top\|\|A^\top A\| \leq \|xx^\top\|\|A^\top A\| \leq 1.
	 	\end{equation*}
	 	Thus, for fixed $x$ satisfying $\|x\|_2 = 1$, we have
	 	\begin{equation}\label{ineq29}
	 	\bbP\left(\|AZBx\|_{2}^2 \geq m + t\right) \leq 2\exp\left[-c\min\left(\frac{t^2}{m}, t\right)\right].
	 	\end{equation}
	 	By \cite{vershynin2010introduction}[Lemma 5.2], there exists $\mathcal{N}_{1/2}$, a $1/2$-net of $\{x \in \bbR^n: \|x\|_2 = 1\}$, such that $\left|\mathcal{N}_{1/2}\right| \leq 5^n$. The union bound, \cite{vershynin2010introduction}[Lemma 5.2] and \eqref{ineq29} together imply that 
	 	\begin{equation*}
	 	\begin{split}
	 	\bbP\left(\|AZB\| \geq 2\sqrt{m + t}\right) \leq \bbP\left(\max_{x \in \mathcal{N}_{1/2}}\|AZBx\|_{2} \geq \sqrt{m + t}\right) \leq 2\cdot 5^n\exp\left[-c\min\left(\frac{t^2}{m}, t\right)\right].
	 	\end{split}
	 	\end{equation*}
	 	
	 	For $\|AZB\|_{\F}$, note that $AZB = (B^\top \otimes A)\text{vec}(Z)$, 
	 	Similarly to \eqref{ineq22}, we have
	 	\begin{equation*}
	 	\begin{split}
	 	\bbE\|(B^\top \otimes A)\text{vec}(Z)\|_2^2 =& \bbE \left[\text{vec}(Z)^\top (B^\top \otimes A)^\top(B^\top \otimes A)\text{vec}(Z)\right]\\
	 	=& \bbE \left\{\text{tr}\left[\text{vec}(Z)^\top (B^\top \otimes A)^\top(B^\top \otimes A)\text{vec}(Z)\right]\right\}\\
	 	=& \text{tr}\left\{\bbE\left[(B^\top \otimes A)^\top(B^\top \otimes A)\text{vec}(Z)\text{vec}(Z)^\top\right]\right\}\\
	 	=& \text{tr}\left[(B^\top \otimes A)^\top(B^\top \otimes A)\bbE\left(\text{vec}(Z)\text{vec}(Z)^\top\right)\right]\\
	 	\leq& \text{tr}\left[(B^\top \otimes A)^\top(B^\top \otimes A)\right]\\
	 	=& \|B^\top \otimes A\|_{\F}^2 = \|B\|_{\F}^2\|A\|_{\F}^2\\ \leq& mn.
	 	\end{split}
	 	\end{equation*}
	 	By Hanson-Wright inequality, we have
	 	\begin{equation*}
	 	\bbP\left(\|AZB\|_{\F}^2 - mn \geq t\right) \leq 2\exp\left[-c\min\left(\frac{t^2}{\|(BB^\top) \otimes (A^\top A)\|_{\F}^2}, \frac{t}{\|(BB^\top) \otimes (A^\top A)\|}\right)\right].
	 	\end{equation*}
	 	Since $\|A\|, \|B\| \leq 1$, we have 
	 	\begin{equation*}
	 	\begin{split}
	 	&\|(BB^\top) \otimes (A^\top A)\|_{\F} = \sqrt{\|A^\top A\|_{\F}^2\|BB^\top\|_{\F}^2} = \sqrt{\sum_{i=1}^{\min\{m, p\}}\sigma^4_i(A)\sum_{i=1}^{\min\{q, n\}}\sigma^4_i(B)} \leq \sqrt{mn},\\
	 	&\|(BB^\top) \otimes (A^\top A)\| \leq 1.
	 	\end{split}
	 	\end{equation*}
	 	Therefore,
	 	\begin{equation*}
	 	\bbP\left(\|AZB\|_{\F}^2\geq mn + t\right) \leq 2\exp\left[-c\min\left(\frac{t^2}{mn}, t\right)\right].
	 	\end{equation*}
	 	
	 	(2) For fixed $x \in \bbR^m$ and $A \in \bbR^{(p_kr_{k-1}) \times (p_1\cdots p_k)}$ satisfying $\|x\|_2 = 1$ and $\|A\| \leq 1$, by \eqref{ineq30} with $B = I_m$, we have
	 	\begin{equation}\label{ineq25}
	 		\begin{split}
	 		\bbP\left(\|AZ\| \geq 2\sqrt{p_{k}r_{k-1} + t}\right) \leq 2\cdot 5^m\exp\left[-c\min\left(\frac{t^2}{p_kr_{k-1}}, t\right)\right].
	 		\end{split}
	 	\end{equation}
	 	By \cite{zhang2018tensor}[Lemma 7], for $1 \leq i \leq k-1$, there exist $\varepsilon$-nets: $U_i^{(1)}, \dots, U_i^{(N_i)} \in \mathbb{R}^{(p_ir_{i-1}) \times r_i}$ (here $r_0 = 1$), $N_i \leq ((2 + \varepsilon)/\varepsilon)^{(p_ir_{i-1}) \times r_i}$, such that
	 	\begin{equation*}
	 		\forall U \in \mathbb{R}^{(p_ir_{i-1}) \times r_i} \text{ satisfying } \|U\| \leq 1, \exists 1 \leq j \leq N_i \text{ such that } \|U_{i}^{(j)} - U\| \leq \varepsilon.
	 	\end{equation*}
	 	Therefore,
	 	\begin{equation}\label{ineq6}
	 		\begin{split}
	 		&\bbP\left(\max_{i_1, \dots, i_{k-1}}\left\|(I_{p_k} \otimes U^{(i_{k-1})\top}_{k - 1})\cdots (I_{p_2\cdots p_k} \otimes U^{(i_1)\top}_{1})Z\right\| \geq 2\sqrt{p_kr_{k-1} + t}\right)\\ \leq& 2((2 + \varepsilon)/\varepsilon)^{\sum_{i = 1}^{k-1}p_ir_{i-1}r_i}5^m\exp\left[-c\min\left(\frac{t^2}{p_kr_{k-1}}, t\right)\right].
	 		\end{split}
	 	\end{equation}
	 	Let 
	 	\begin{equation*}
	 		\begin{split}
	 		U_1^*, \dots, U_{k-1}^* \in& \argmax_{U_i \in \bbR^{(p_ir_{i-1}) \times r_i}, 1 \leq i \leq k-1\atop \|U_i\| \leq 1, \quad 1 \leq i \leq k-1}\left\|(I_{p_k} \otimes U^{\top}_{k - 1})\cdots (I_{p_2\cdots p_k} \otimes U^{\top}_{1})Z\right\|,\\
	 		M =& \max_{U_i \in \bbR^{(p_ir_{i-1}) \times r_i}, 1 \leq i \leq k-1\atop \|U_i\| \leq 1, \quad 1 \leq i \leq k-1}\left\|(I_{p_k} \otimes U^{\top}_{k - 1})\cdots (I_{p_2\cdots p_k} \otimes U^{\top}_{1})Z\right\|.
	 		\end{split}
	 	\end{equation*}
	 	Then for any $1 \leq i \leq k-1$, there exists $1 \leq j_i \leq N_i$, such that $\|U_i^{(j_i)} - U_i^*\| \leq \varepsilon$. Then 
	 	\begin{equation}\label{ineq5}
	 		\begin{split}
	 		M =& \left\|(I_{p_k} \otimes U^{*\top}_{k - 1})\cdots (I_{p_2\cdots p_k} \otimes U^{*\top}_{1})Z\right\|\\
	 		\leq &  \left\|(I_{p_k} \otimes U^{(j_{k-1})\top}_{k - 1})\cdots (I_{p_2\cdots p_k} \otimes U^{(j_1)\top}_{1})Z\right\|\\ 
	 		&+ \left\|\left(I_{p_k} \otimes (U_{k-1}^* - U^{(j_{k-1})}_{k - 1})\right)^\top(I_{p_{k-1}p_k} \otimes U^{(j_{k-2})\top}_{k - 2})\cdots (I_{p_2\cdots p_k} \otimes U^{(j_1)\top}_{1})Z\right\|\\
	 		&+ \cdots + \left\|(I_{p_k} \otimes U^{*\top}_{k - 1})\cdots (I_{p_3\cdots p_k} \otimes U^{*\top}_{2})\left(I_{p_2\cdots p_k} \otimes (U_1^* - U_1^{(j_1)})^\top\right)Z\right\|\\
	 		\leq& \left\|(I_{p_k} \otimes U^{(j_{k-1})\top}_{k - 1})\cdots (I_{p_2\cdots p_k} \otimes U^{(j_1)\top}_{1})Z\right\| + \varepsilon (k-1)M.
	 		\end{split}
	 	\end{equation}
	 	Combine \eqref{ineq6} and the previous inequality together, we have 
	 	\begin{equation}\label{ineq33}
	 		\begin{split}
	 		&\bbP\left(M \geq \frac{2\sqrt{p_kr_{k-1} + t}}{1 - (k-1)\varepsilon}\right)\\ \leq& 2((2 + \varepsilon)/\varepsilon)^{\sum_{i = 1}^{k-1}p_ir_{i-1}r_i}5^m\exp\left[-c\min\left(\frac{t^2}{p_kr_{k-1}}, t\right)\right].
	 		\end{split}
	 	\end{equation}
	 	By setting $\varepsilon = \frac{1}{2(k-1)}$ and $t = C\sqrt{\sum_{i = 1}^{k-1}p_ir_{i-1}r_i + p_kr_{k-1} + m}$, we have proved \eqref{ineq24}.\\
	 	(3) For fixed $A \in \bbR^{r_k \times (p_1\cdots p_k)}, B \in \bbR^{(p_{k+1}\cdots p_d) \times (p_{k+1}r_{k+1})}$ satisfying $\|A\| \leq 1, \|B\| \leq 1$, by \eqref{ineq30}, we have
	 	\begin{equation*}
	 		\bbP\left(\|AZB\| \geq 2\sqrt{r_k + t}\right) \leq 2\cdot 5^{p_{k+1}r_{k+1}}\exp\left[-c\min\left(\frac{t^2}{r_k}, t\right)\right].
	 	\end{equation*}
	 	Let 
	 	\begin{equation*}
	 		M = \max_{U_i \in \bbR^{(p_ir_{i-1}) \times r_i}, \|U_i\| \leq 1, 1 \leq i \leq k\atop V_i \in \bbR^{(p_ir_i) \times r_{i-1}}, \|V_i\| \leq 1, k+2 \leq i \leq d}\left\|U^{\top}_{k}(I_{p_k} \otimes U^{\top}_{k - 1})\cdots (I_{p_2\cdots p_k} \otimes U^{\top}_{1})Z(V_d \otimes I_{p_{k+1}\dots p_{d-1}})\cdots (V_{k + 2} \otimes I_{p_{k+1}})\right\|,
	 	\end{equation*}
	 	By similar arguments as \eqref{ineq33}, one has
	 	\begin{equation*}
	 		\bbP\left(M \geq \frac{2\sqrt{r_k + t}}{1 - (d-1)\varepsilon}\right) \leq 2((2 + \varepsilon)/\varepsilon)^{\sum_{1 \leq i \leq d, i \neq k+1}p_ir_{i-1}r_i}5^{p_{k+1}r_{k+1}}\exp\left[-c\min\left(\frac{t^2}{r_k}, t\right)\right]
	 	\end{equation*}
	 	for any $0 < \epsilon < \frac{1}{d}$. By setting $\varepsilon = \frac{1}{2(d-1)}$ and $t = C\sum_{i=1}^d p_ir_{i-1}r_i$, we have proved the third part of Lemma \ref{lm:concentration_gaussian}.
	 	
	 	(4) For fixed $U_1, \dots, U_{d-1}$ satisfying $\|U_i\| \leq 1$, let $A=U^{\top}_{d-1}(I_{p_{d-1}} \otimes U^{\top}_{d - 2})\cdots (I_{p_2\cdots p_{d-1}} \otimes U^{\top}_{1}) \in \bbR^{r_{d-1} \times (p_1\cdots p_{d-1})}$, then $\|A\| \leq 1$. By \eqref{ineq31} with $B = I_{p_d}$, we have
	 	\begin{equation*}
	 		\begin{split}
	 		\bbP\left(\|AZ\|_{\F}^2 \geq p_dr_{d-1} + t\right) \leq 2\exp\left[-c\min\left(\frac{t^2}{p_dr_{d - 1}}, t\right)\right].
	 		\end{split}
	 	\end{equation*}
        Let
        \begin{equation*}
            \begin{split}
            	M =& \max_{U_i \in \bbR^{(p_ir_{i-1}) \times r_i}, \|U_i\| \leq 1}\|U^{\top}_{d-1}(I_{p_{d-1}} \otimes U^{\top}_{d - 2})\cdots (I_{p_2\cdots p_{d-1}} \otimes U^{\top}_{1})Z\|_{\F}.
            \end{split}
        \end{equation*}
        The similar proof of \eqref{ineq33} leads us to
         \begin{equation}\label{ineq34}
        \begin{split}
        \bbP\left(M^2 \geq \frac{r_{d-1}p_d + t}{(1 - \varepsilon (d-1))^2}\right) \leq 2\left((2 + \varepsilon)/\varepsilon\right)^{\sum_{k=1}^{d-1}p_kr_{k-1}r_k}\exp\left[-c\min\left(\frac{t^2}{p_dr_{d - 1}}, t\right)\right].
        \end{split}
        \end{equation}
        for $0 < \varepsilon < \frac{1}{d-1}$. By setting $\varepsilon = \frac{1}{2(d-1)}$ and $t = C\sum_{k=1}^{d}p_kr_{k-1}r_k$, we have arrived at \eqref{ineq:f_norm_noise_2}.
        
        (5) For fixed $A \in \bbR^{r_{k} \times (p_1\cdots p_{k})}$, $B \in \bbR^{(p_{k+1}\cdots p_d) \times (p_{k+1}r_{k+1})}$, $\|A\| \leq 1, \|B\| \leq 1$,
        by \eqref{ineq31}, we have
        \begin{equation*}
        \bbP\left(\|AZB\|_{\F}^2\geq p_{k+1}r_{k+1}r_{k} + t\right) \leq 2\exp\left[-c\min\left(\frac{t^2}{p_{k+1}r_{k+1}r_k}, t\right)\right].
        \end{equation*}
        
      Let
      \begin{equation*}
        M = \max_{U_i \in \bbR^{(p_ir_{i-1}) \times r_i}, \|U_i\| \leq 1\atop V_i \in \bbR^{(p_ir_i) \times r_{i-1}}, \|V_i\| \leq 1}\left\|U^{\top}_{k}(I_{p_k} \otimes U^{\top}_{k - 1})\cdots (I_{p_2\cdots p_k} \otimes U^{\top}_{1})Z(V_d \otimes I_{p_{k+1}\dots p_{d-1}})\cdots (V_{k + 2} \otimes I_{p_{k+1}})\right\|_{\F},
        \end{equation*}
        Similarly to \eqref{ineq33}, for any $0 < \varepsilon < \frac{1}{d-1}$, we have
        \begin{equation}\label{ineq15}
        \begin{split}
        \bbP\bigg(M \geq \frac{\sqrt{p_{k+1}r_{k+1}r_k + t}}{1 - (d-1)\varepsilon}\bigg) \leq 2((2 + \varepsilon)/\varepsilon)^{\sum_{1 \leq i \leq d, i \neq k+1}p_ir_{i-1}r_i}\exp\left[-c\min\left(\frac{t^2}{p_{k+1}r_{k+1}r_k}, t\right)\right].
        \end{split}
        \end{equation}
        By setting $\varepsilon = \frac{1}{2(d-1)}$ and $t = C\sum_{i=1}^dp_ir_{i-1}r_i$, we have proved \eqref{ineq:f_norm_noise}.
	\end{proof}
	\begin{lemma}\label{lm:perturbation}
		Suppose $X, Z \in \bbR^{p_1 \times p_2}$, rank$(X) = r$. Let $Y = X + Z$, $\widehat{U} = \text{SVD}^{L}_r(Y)$, $\widehat{V} = \text{SVD}^{R}_r(Y)$. Then we have
		\begin{equation*}
			\max\{\|\widehat{U}_{\perp}^\top X\|, \|X\widehat{V}_{\perp}\|\} \leq 2\|Z\|, \quad \max\{\|\widehat{U}_{\perp}^\top X\|_{\F}, \|X\widehat{V}_{\perp}\|_{\F}\} \leq 2\min\{\|Z\|_{\F}, \sqrt{r}\|Z\|\}.
		\end{equation*}
	\end{lemma}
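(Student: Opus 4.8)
The plan is to reduce everything to a bound on $\widehat U_\perp^\top X$, obtaining the analogous statement for $X\widehat V_\perp$ by transposition (apply the $\widehat U$-bound to $X^\top,Z^\top,Y^\top$, whose leading $r$ left singular subspace is exactly $\widehat V$). The two ingredients I would use are: (i) $\widehat U\widehat U^\top Y$ is a best rank-$r$ approximation of $Y$ in both the spectral and Frobenius norms (Eckart--Young), so that $(I-\widehat U\widehat U^\top)Y=\widehat U_\perp\widehat U_\perp^\top Y$ has singular values $s_{r+1}(Y)\ge s_{r+2}(Y)\ge\cdots$; and (ii) $s_{r+1}(X)=0$ since $\rank(X)=r$.

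For the spectral bounds, I would write $\widehat U_\perp^\top X=\widehat U_\perp^\top Y-\widehat U_\perp^\top Z$ and estimate $\|\widehat U_\perp^\top X\|\le\|\widehat U_\perp^\top Y\|+\|\widehat U_\perp^\top Z\|$. The second term is at most $\|Z\|$. For the first, by (i), $\|\widehat U_\perp^\top Y\|=\|\widehat U_\perp\widehat U_\perp^\top Y\|=s_{r+1}(Y)$, and Weyl's inequality together with (ii) gives $s_{r+1}(Y)\le s_{r+1}(X)+s_1(Z)=\|Z\|$. Hence $\|\widehat U_\perp^\top X\|\le 2\|Z\|$.

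For the Frobenius bounds I would use the same splitting: $\|\widehat U_\perp^\top X\|_{\F}\le\|\widehat U_\perp^\top Y\|_{\F}+\|Z\|_{\F}$. By (i), $\widehat U\widehat U^\top Y$ is a best rank-$r$ Frobenius approximation of $Y$, and $X$ is itself rank $r$, so $\|\widehat U_\perp^\top Y\|_{\F}=\|Y-\widehat U\widehat U^\top Y\|_{\F}\le\|Y-X\|_{\F}=\|Z\|_{\F}$, giving $\|\widehat U_\perp^\top X\|_{\F}\le 2\|Z\|_{\F}$. Separately, $\rank(\widehat U_\perp^\top X)\le\rank(X)=r$, so $\|\widehat U_\perp^\top X\|_{\F}\le\sqrt r\,\|\widehat U_\perp^\top X\|\le 2\sqrt r\,\|Z\|$ by the spectral bound just proved; taking the minimum yields $\|\widehat U_\perp^\top X\|_{\F}\le 2\min\{\|Z\|_{\F},\sqrt r\,\|Z\|\}$.

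There is no real obstacle here — the lemma is essentially a packaging of Eckart--Young and Weyl. The only point that needs a sentence of care is that $\widehat U\widehat U^\top Y$ is a best rank-$r$ approximation even when $Y$ has a repeated singular value at index $r$: any admissible output of $\text{SVD}_r^L(Y)$ consists of $r$ orthonormal leading left singular vectors, for which $\widehat U\widehat U^\top Y=\sum_{i\le r}s_i(Y)u_iv_i^\top$ is still optimal, so the argument is unaffected.
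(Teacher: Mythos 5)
Your proof is correct. The paper does not actually prove this lemma itself; it simply cites Lemma 6 of \cite{zhang2018tensor} and Theorem 1 of \cite{luo2020schatten}, and the argument used there is essentially the one you give: split $\widehat U_\perp^\top X=\widehat U_\perp^\top Y-\widehat U_\perp^\top Z$, control $\|\widehat U_\perp^\top Y\|=s_{r+1}(Y)\le\|Z\|$ via Weyl (respectively $\|\widehat U_\perp^\top Y\|_\F\le\|Z\|_\F$ via Eckart--Young), and get the $\sqrt r\,\|Z\|$ variant from $\rank(\widehat U_\perp^\top X)\le r$. Your handling of the transposition for $X\widehat V_\perp$ and of possible ties at the $r$th singular value is also fine, so your write-up is a valid self-contained substitute for the citation.
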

    \begin{proof}[Proof of Lemma \ref{lm:perturbation}]
    	See \cite[Lemma 6]{zhang2018tensor} and \cite[Theorem 1]{luo2020schatten}.
    \end{proof}
\end{sloppypar}

	 \bibliographystyle{apalike}
	 \bibliography{reference}
\end{document}